\newtheorem{lemma}{Lemma}
\newtheorem{theorem}[lemma]{Theorem}
\newtheorem{proposition}[lemma]{Proposition}
\newtheorem{coro}[lemma]{Corollary}
\newtheorem{claim}[lemma]{Claim}
\newtheorem{conjecture}[lemma]{Conjecture}
\newcounter{claim}
\newenvironment{proof}[1][]%
 {\noindent {\setcounter{claim}{0}\sc proof:  }{#1}{}}{\hfill$\Box$\vspace{2ex}} 
\tikzstyle{vertex}=[circle, draw, inner sep=0pt, minimum size=6pt]
\newcommand{\vertex}{\node[vertex]}
\newcommand{\ov}{\overline}
\newcommand{\mc}{\mathcal}
\newcommand{\btw}[3]{d(#1,#3) = d(#1,#2) + d(#2,#3)}
\newenvironment{myp}[1]{{\noindent \it \bf \sf Proof\/ #1:}}{
\hfill {{\vrule height5pt width5pt depth0pt}\hskip 0cm \bigskip}
}
\newcommand{\ol}[1]{\overline{#1}}
\date{}
\title{Lines in bipartite graphs and in 2-metric spaces
\footnote{Partially supported by Basal program AFB170001 and CONICYT Fondecyt/Regular 1180994.}}
\author{M. Matamala $^{a,b}$ and J. Zamora$^{b,c}$ \\
\small ($a$) Depto. Ingeniería Matemática (DIM), Universidad de Chile \\
\small ($b$) Centro de Modelamiento Matemático (CMM, UMI 2807 CNRS), Universidad de Chile\\
\small ($c$) Depto. Matemáticas, Universidad Andres Bello \\}
\begin{document}

\maketitle

\begin{abstract}

The \emph{line generated} by two distinct points, $x$ and $y$, in a finite metric space $M=(V,d)$,
denoted by $\ov{xy}^M$, 
is the set of points given by 
$$\ov{xy}^M:=\{z\in V: d(x,y)=|d(x,z)+d(z,y)| \text{ or } d(x,y)=|d(x,z)-d(z,y)|\}.$$
A 2-set $\{x,y\}$ such that $\ov{xy}^M=V$ is called
a \emph{universal pair} and its generated line a \emph{universal line}.

Chen and Chvátal conjectured that in any finite metric space 
either there is a universal line or there are at least $|V|$ different (non-universal) lines.
Chvátal proved that this is indeed the case when the metric space has distances
in the set $\{0,1,2\}$. 

Aboulker et {\it al.} proposed the following strengthenings for 
Chen and Chvátal conjecture in the context of metric spaces induced by finite graphs: First, the number of lines plus the number of bridges of the graph is at least the number of points. Second, the number of lines plus the number of universal pairs is at least the number of point of the space.

In this work we prove that the first conjecture is true for bipartite graphs different of $C_4$ or $K_{2,3}$, and that the second conjecture is true for metric spaces with distances in the set $\{0,1,2\}$. 
\end{abstract}

{\bf Keywords:}  Chen-Chvatal conjecture; graph metric

\section{Introduction}

In a metric space $M=(V,d)$ a \emph{line} defined by two distinct points $x,y\in V$ is %a 
the 
subset of $V$ defined by  
$$\ov{xy}^M=\{z\in V: d(x,y)=|d(x,z)+d(z,y)| \text{ or } d(x,y)=|d(x,z)-d(z,y)|\} (\text{see \cite{ChvatalMetric}}).$$

  A line $\ov{xy}^M$ is \emph{universal} if $\ov{xy}^M=V$; in this case $\{x,y\}$ is a \emph{universal pair}.
The number of distinct lines in $M$ is denoted by $\ell(M)$.

In \cite{CC}, Chen and Chvátal proposed the following conjecture.

\begin{conjecture}\label{con:chch}
 Any finite metric space $M=(V,d)$ with at least two points and $\ell(M)<|V|$ 
 has a universal line.
\end{conjecture}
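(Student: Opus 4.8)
The statement is the full Chen--Chvátal conjecture, which remains open; accordingly the plan below is a genuine line of attack rather than a complete argument, and I will be explicit about where it stalls. The natural framework is induction on $|V|$. Assume $M=(V,d)$ has no universal line and $|V|\ge 3$, and aim to show $\ell(M)\ge |V|$. Fix a point $v$ and pass to the induced subspace $M'=(V\sm\{v\},d)$. By the inductive hypothesis, either $M'$ has a universal line or $\ell(M')\ge |V|-1$; in the latter case it would suffice to exhibit a single line of $M$ that is genuinely new, i.e. whose restriction to $V\sm\{v\}$ is not among the lines counted in $M'$. The argument therefore splits according to whether deleting $v$ creates a universal line.

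First I would treat the case where $M'$ has no universal line, so $\ell(M')\ge|V|-1$. Here the task is to produce one additional line of $M$ attributable to $v$. The idea is to track how each line $\ov{xy}^{M'}$ sits inside the corresponding line $\ov{xy}^{M}$: a point $z\ne v$ lies on $\ov{xy}^{M}$ if and only if it lies on $\ov{xy}^{M'}$, since the defining equations involve only $x,y,z$. Thus the lines of $M$, after deleting $v$ from each, surject onto a collection containing all lines of $M'$, and the count can fail to increase only if every line through $v$ coincides, after removing $v$, with a line avoiding $v$. I would try to rule this out: since $v$ generates no universal line, some point $w$ satisfies $w\notin\ov{vx}^{M}$ for a suitable $x$, and a careful choice of $x$ should force $\ov{vx}^{M}\sm\{v\}$ to differ from every line avoiding $v$, yielding the extra line.

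The second, harder case is when $M'$ acquires a universal line $\ov{xy}^{M'}=V\sm\{v\}$ even though no line of $M$ is universal; this can happen only because $v\notin\ov{xy}^{M}$. I would analyse the local structure around such $x,y,v$ directly, exploiting the betweenness relation $\btw{x}{z}{y}$ together with its ``difference'' counterpart to constrain the position of $v$ relative to the points on $\ov{xy}^{M'}$. The goal is a structural dichotomy analogous to the near-pencil case in the de Bruijn--Erd\H{o}s theorem: either the space is forced into a rigid configuration whose line count can be computed by hand, or enough distinct lines through $v$ appear to reach the bound.

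The principal obstacle---and the reason the conjecture is open---is precisely the interaction between deletion and the sum-or-difference definition of a line. Unlike pure betweenness lines, these lines can be large and can coincide for many unrelated pairs, so there is no available handle bounding the number of pairs $\{x,y\}$ generating a fixed line; the point-to-line injection that underlies every proof of the planar de Bruijn--Erd\H{o}s theorem has no known metric analogue. A fallback would be a linear-algebraic rank bound on the point--line incidence matrix, in the spirit of Fisher-type proofs, but the absence of a constant intersection pattern for metric lines obstructs this route as well. Barring such a bound, the method provably yields the conjecture only for the restricted classes---distances in $\{0,1,2\}$, and bipartite graph metrics---established later in this paper, where the line structure is rigid enough to control.
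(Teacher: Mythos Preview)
The statement you chose is Conjecture~\ref{con:chch}, which the paper states as an open problem and does \emph{not} prove; there is no ``paper's own proof'' to compare against. The paper's contributions are the partial results Theorem~\ref{t:mainbip} (bipartite graph metrics) and Theorem~\ref{t:diamtwo} (2-metric spaces), and you correctly recognise this at the outset and in your final paragraph.

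Your inductive skeleton---delete a point, invoke the monotonicity $\ell^*(M)\ge \ell^*(M^{-v})$ (this is Lemma~\ref{l:linesind} in the paper), then look for one additional line---is exactly the template the paper uses for the 2-metric case in Proposition~\ref{p:notwins} and Theorem~\ref{t:diamtwo}. Your diagnosis of the obstruction is also accurate: the central difficulty is bounding how many pairs generate a given line, and the paper's bipartite argument succeeds precisely because Proposition~\ref{p:uniqueline} and Lemma~\ref{l:whenaligned} give a structural characterisation of coincident distance-$2$ lines that has no known analogue in general.

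One small sharpening of your Case~1: the failure mode is not that $\ov{vx}^M\sm\{v\}$ happens to coincide with some line of $M'$, but that $\ov{vx}^M=\ov{ab}^M$ \emph{as subsets of $V$} for some $a,b\ne v$. Distinct lines of $M'$ already lift to distinct lines of $M$ by Lemma~\ref{l:linesind}, so the extra line you need is simply any $\ov{vx}^M$ not equal to a line generated by a pair avoiding $v$. This is the same ``new line through the deleted point'' manoeuvre that appears in the proof of Theorem~\ref{t:diamtwo}, where the paper exploits the rigidity of $2$-metrics (Lemma~\ref{l:1}) to guarantee such a line exists; without a comparable rigidity result, your sketch stalls exactly where you say it does.
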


Conjecture \ref{con:chch} is a generalization of a classical result in Euclidean geometry asserting
that every  set of $n$ non-collinear points in the Euclidean plane determines at least $n$ distinct lines (see \cite{dbe}). 

The current best lower bound for the number of lines in a metric space with no universal line is 
$\ell(M)=\Omega(\sqrt n)$ (\cite{metricSpace}).

Although in general the distance function ranges over the non-negative reals,
in order to prove Conjecture $\ref{con:chch}$,  it was observed in \cite{AK} that it is enough to consider non-negative integers. This motivates the definition of  $k$-metric space, with $k$ a positive integer, to be a metric space in which all distances are integral and are at most $k$.
In this context, it was  also proved in \cite{metricSpace} that if $M$ is a $k$-metric space, then the previous bound can be improved to $\ell(M)\geq n/(5k)$, for each $k\geq 3$. 
%There are some families of metric spaces where it is known that Conjecture \ref{con:chch} holds.

One particular metric space with integer distances is the metric space induced by a graph. Here the points are the vertices of the graph and the distance 
between two vertices 
is defined by the length of 
%the 
a shortest path
between them.
To ease the presentation 
we will refer to 
the metric space induced by a graph $G = (V, E)$, 
just as $G$. Hence, $\ov{xy}^G$  denotes 
the line defined by two distinct vertices $x$ and $y$ in $V$.

In~\cite{BBCCCCFZ} and~\cite{AK} it was proved that Conjecture~\ref{con:chch} holds for metric spaces induced by chordal graphs and for distance-hereditary graphs, respectively. 

The previous results were extended in \cite{AMRZ}, 
where the following stronger result was proved:

\begin{theorem}[Theorem 2.1 in \cite{AMRZ}]\label{th:amrz}
Every graph $G$ such that every induced subgraph of $G$ is either a chordal
graph, has a cut-vertex or a non-trivial module satisfies 
$\ell(G)+\textsc{br}(G)\geq |G|$,
unless $G$ is one of the six graphs depicted in Figure \ref{f:smallgr},
where $\textsc{br}(G)$ is the number of bridges in $G$.
\end{theorem}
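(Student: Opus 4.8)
The plan is to induct on $n=|G|$. When $n$ is below a small absolute constant (a bounded check, up to roughly five or six vertices) one verifies the inequality directly, and this is exactly where the six exceptional graphs of Figure~\ref{f:smallgr} arise: each is a small graph on which $\ell(G)+\textsc{br}(G)$ falls just short of $n$, typically because a modular structure forces a universal line and collapses $\ell(G)$ (for instance $C_4=K_{2,2}$ and $K_{2,3}$ are of this kind). For $n$ large, the hypothesis furnishes a trichotomy: $G$ is chordal, or $G$ has a cut-vertex, or $G$ has a non-trivial module. (If $G$ is not chordal and has no cut-vertex then, since the hypothesis applies to $G$ itself, $G$ must have a non-trivial module, so the three cases are exhaustive.) In the last two cases the pieces of the decomposition again lie in the hereditary class and are strictly smaller, so the induction hypothesis applies to them, after recognising by hand any piece that is itself exceptional.

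The chordal case I would treat as a black box built on~\cite{BBCCCCFZ}: that paper establishes Conjecture~\ref{con:chch} for chordal graphs, and one checks that its clique-tree / simplicial-vertex analysis in fact delivers the strengthened bound $\ell(G)+\textsc{br}(G)\geq n$ for chordal $G$, the $\textsc{br}(G)$-term accounting for precisely the configurations in which a universal line is present (a complete graph already has $\ell=\binom{n}{2}\geq n$ for $n\geq3$).

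For the cut-vertex case, write $G=G_{1}\cup G_{2}$ with $G_{1}\cap G_{2}=\{v\}$ and each $G_{i}$ a proper induced subgraph on at least two vertices. Since $v$ is a cut-vertex, distances inside each $G_{i}$ are preserved in $G$, so $\ov{xy}^{G}\cap G_{i}=\ov{xy}^{G_{i}}$ whenever $x,y\in G_{i}$; and --- using that any shortest path from $G_{1}\setminus\{v\}$ to $G_{2}\setminus\{v\}$ passes through $v$ --- a short computation shows that a line $\ov{ab}^{G}$ with $a,b\in G_{2}$ meets $G_{1}$ in $\emptyset$, in $\{v\}$, or in all of $G_{1}$, and symmetrically, while a crossing line $\ov{xa}^{G}$ with $x\in G_{1}$, $a\in G_{2}$ splits as $S_{x}\cup T_{a}$ with $S_{x}\subseteq G_{1}$ depending only on $x$ and $v$, and $T_{a}\subseteq G_{2}$ depending only on $a$ and $v$. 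One reads off that the unique line of $G$ which is forced both by a line of $G_{1}$ and by a line of $G_{2}$ is the universal line, so $\ell(G)\geq\ell(G_{1})+\ell(G_{2})-1$; meanwhile every bridge of $G_{1}$ or of $G_{2}$ remains a bridge of $G$, so $\textsc{br}(G)\geq\textsc{br}(G_{1})+\textsc{br}(G_{2})$, and since $|G|=|G_{1}|+|G_{2}|-1$ the inductive estimates $\ell(G_{i})+\textsc{br}(G_{i})\geq|G_{i}|$ add up to the claim. The genuine work is to cover the subcases in which $G_{1}$ or $G_{2}$ is one of the exceptional graphs, so that the induction hypothesis is unavailable for it.

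The module case is where I expect the main obstacle. Let $M$ be a non-trivial module; set $H=G[M]$ (and if $G[M]$ is disconnected, work instead with $G[M]$ plus an apex vertex adjacent to all of $M$), and let $G'=G/M$, which is isomorphic to $G[(V\setminus M)\cup\{x\}]$ for any fixed $x\in M$ and so again lies in the class. Distances in $G$ between two vertices outside $M$ agree with their $G'$-distances, the distance from a vertex of $M$ to a vertex outside depends only on the outside vertex, and --- when $M$ has a neighbour outside --- the distance between two vertices of $M$ is either realised inside $H$ or equals $2$; exploiting this one classifies the lines of $G$ as those essentially living in the quotient, those essentially living inside $M$, and mixed ones, and so obtains $\ell(G)\geq\ell(G')+\ell(H)$ up to a controlled deficit, together with the matching bridge comparison. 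Since $n=|G'|+|H|-1$ (or $|G'|+|H|-2$ in the apex variant), the inductive inequalities once more give $\ell(G)+\textsc{br}(G)\geq n$ --- except that modular substitution is exactly the operation that can create a universal line, so the deficit can be as large as the exceptional list permits. Pinning down precisely when equality fails is the crux: I would first dispose of the case in which $H$ or $G'$ is already rich in lines, and then reduce the remaining possibilities, in which both $H$ and $G'$ are small, to the finite check that produces Figure~\ref{f:smallgr}. Everything else is a combination of the chordal input with the two gluing arguments above.
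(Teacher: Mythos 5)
First, a point of orientation: Theorem~\ref{th:amrz} is not proved in this paper at all --- it is quoted verbatim from \cite{AMRZ} as background, and the only fragment of its proof that reappears here is the cut-vertex gluing step, restated (for lines of ${\cal L}_2$) as Lemma~\ref{lem:cutvertex}. So there is no in-paper proof to measure your attempt against; I can only assess the proposal on its own terms and against the cited source.

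Your skeleton --- induction on $n$ with the trichotomy chordal / cut-vertex / non-trivial module, the observation that the trichotomy is exhaustive because the hypothesis applies to $G$ itself, and the cut-vertex gluing with $\ell(G)\geq\ell(G_1)+\ell(G_2)-1$, $\textsc{br}(G)=\textsc{br}(G_1)+\textsc{br}(G_2)$ and $|G|=|G_1|+|G_2|-1$ --- is sound and is essentially the architecture of \cite{AMRZ}; your analysis of how a line $\ov{ab}^G$ with $a,b\in G_2$ meets $G_1$ matches equation~(\ref{eq:cutset}). But two of your three cases are not arguments, only declarations of intent, and they are exactly where the content of the theorem lies. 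For the chordal case, \cite{BBCCCCFZ} proves only Conjecture~\ref{con:chch} (i.e.\ ``$\ell(G)<n$ implies a universal line''), which is strictly weaker than $\ell(G)+\textsc{br}(G)\geq n$: a chordal graph may have a universal line, few lines and no bridges a priori, so ``one checks that the clique-tree analysis delivers the strengthened bound'' is a claim requiring its own proof, not a black box. For the module case you explicitly leave the crux open (``pinning down precisely when equality fails is the crux''), yet this is where all six exceptional graphs come from: $K_6'=K_{2,2,2}$ and $K_8'=K_{2,2,2,2}$ are $2$-connected non-chordal graphs whose only available decomposition is by modules, with chordal quotient $K_3$ resp.\ $K_4$, so the deficit in your $\ell(G)\geq\ell(G')+\ell(H)$ comparison is precisely what produces them. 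Relatedly, your opening claim that the exceptions arise from ``a bounded check, up to roughly five or six vertices'' is wrong as stated: $K_8'$ has eight vertices, so the exceptional list cannot be generated by a small-$n$ base case; it must fall out of the module analysis you have not carried out. As it stands the proposal is a correct outline with the two hardest thirds missing.
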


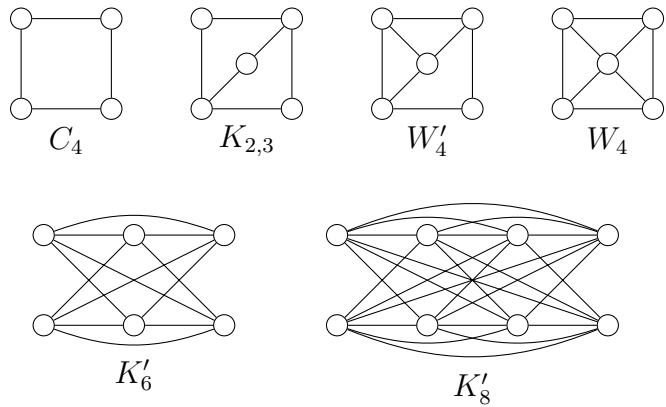
\begin{figure}
\centering
\begin{tikzpicture}[scale=0.6]
\begin{scope}[xshift=0cm]
\vertex[circle,  minimum size=8pt](1) at  (0,0) {};
\vertex[circle,  minimum size=8pt](2) at  (0,2) {};
\vertex[circle,  minimum size=8pt](3) at  (2,2) {};
\vertex[circle,  minimum size=8pt](4) at  (2,0) {};
\draw (1)--(2) -- (3) -- (4)--(1);
\draw (1,-0.7) node{$C_4$}; 
\end{scope}

\begin{scope}[xshift=4cm]
\vertex[circle,  minimum size=8pt](1) at  (0,0) {};
\vertex[circle,  minimum size=8pt](2) at  (0,2) {};
\vertex[circle,  minimum size=8pt](3) at  (2,2) {};
\vertex[circle,  minimum size=8pt](4) at  (2,0) {};
\vertex[circle,  minimum size=8pt](5) at  (1,1) {};
\draw (1)--(2) -- (3) -- (4)--(1);
\draw (1)--(5) -- (3);
\draw (1,-0.7) node{$K_{2,3}$}; 
\end{scope}

\begin{scope}[xshift=8cm]
\vertex[circle,  minimum size=8pt](1) at  (0,0) {};
\vertex[circle,  minimum size=8pt](2) at  (0,2) {};
\vertex[circle,  minimum size=8pt](3) at  (2,2) {};
\vertex[circle,  minimum size=8pt](4) at  (2,0) {};
\vertex[circle,  minimum size=8pt](5) at  (1,1) {};
\draw (1)--(2) -- (3) -- (4)--(1);
\draw (1)--(5) -- (3);
\draw (2)--(5) ;
\draw (1,-0.7) node{$W'_4$}; 
\end{scope}

\begin{scope}[xshift=12cm]
\vertex[circle,  minimum size=8pt](1) at  (0,0) {};
\vertex[circle,  minimum size=8pt](2) at  (0,2) {};
\vertex[circle,  minimum size=8pt](3) at  (2,2) {};
\vertex[circle,  minimum size=8pt](4) at  (2,0) {};
\vertex[circle,  minimum size=8pt](5) at  (1,1) {};
\draw (1)--(2) -- (3) -- (4)--(1);
\draw (1)--(5) -- (3);
\draw (2)--(5) -- (4);
\draw (1,-0.7) node{$W_4$}; 
\end{scope}

\begin{scope}[xshift=0.5cm, yshift=-4.8cm]
\vertex[circle,  minimum size=8pt](1) at  (0,0) {};
\vertex[circle,  minimum size=8pt](2) at  (2,0) {};
\vertex[circle,  minimum size=8pt](3) at  (4,0) {};
\vertex[circle,  minimum size=8pt](4) at  (0,2) {};
\vertex[circle,  minimum size=8pt](5) at  (2,2) {};
\vertex[circle,  minimum size=8pt](6) at  (4,2) {};
\draw (1)--(5) -- (3) -- (4)--(2)--(6)--(1) -- (2) -- (3);
\draw (4)--(5)--(6);
\draw(1)  to[bend right=20]   (3);  
\draw (4)  to[bend left=20]   (6);  
\draw (2,-1.1) node{$K_6'$}; 
\end{scope}

\begin{scope}[xshift=7cm, yshift=-4.8cm]
\vertex[circle,  minimum size=8pt](1) at  (0,0) {};
\vertex[circle,  minimum size=8pt](2) at  (2,0) {};
\vertex[circle,  minimum size=8pt](3) at  (4,0) {};
\vertex[circle,  minimum size=8pt](4) at  (6,0) {};
\vertex[circle,  minimum size=8pt](5) at  (0,2) {};
\vertex[circle,  minimum size=8pt](6) at  (2,2) {};
\vertex[circle,  minimum size=8pt](7) at  (4,2) {};
\vertex[circle,  minimum size=8pt](8) at  (6,2) {};
\draw (1)--(6)--(3) --(8) --(2) --(5) --(4) --(7) --(1) --(8)  ;
\draw (5)--(3);
\draw (4)--(6);
\draw (2)--(7);
\draw (1)--(2)--(3)--(4);
\draw (5)--(6)--(7)--(8);
\draw(1)  to[bend right=18]   (3);  
\draw (5)  to[bend left=18]   (7);  
\draw(2)  to[bend right=18]   (4);  
\draw (6)  to[bend left=18]   (8);  
\draw(1)  to[bend right=22]   (4);  
\draw (5)  to[bend left=22]   (8);
\draw (3,-1.4) node{$K_8'$}; 
\end{scope}
\end{tikzpicture}
  \caption{Graphs excluded in Theorem \ref{th:amrz}.}
  \label{f:smallgr}
\end{figure}

Given this result, the authors in \cite{AMRZ} proposed the following conjecture:

\begin{conjecture}[Conjecture 2.2 in \cite{AMRZ}]\label{c:ampzBR}
There is a finite set of graphs ${\cal F}_0$ such that every connected graph $G \notin \mathcal F_0$ either has a pendant edge or satisfies  $\ell(G)+\textsc{br}(G)\geq |G|$.
\end{conjecture}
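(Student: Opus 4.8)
The plan is to prove the equivalent statement that $\ell(G)+\textsc{br}(G)\geq|G|$ holds for every connected graph $G$ of minimum degree at least $2$ outside a finite set ${\cal F}_0$; a graph with a vertex of degree $1$ has a pendant edge and so satisfies the first disjunct, so this suffices. I would induct on $|G|$, placing in ${\cal F}_0$ every graph below a suitable size threshold, which in particular absorbs the six exceptions of Theorem \ref{th:amrz}. The inductive step rests on the block/module dichotomy: if $G$ has a cut-vertex or a nontrivial module it decomposes into strictly smaller pieces from which the bound is inherited, and otherwise $G$ is prime and $2$-connected, which becomes the core case. Since prime $2$-connected chordal graphs are already covered by Theorem \ref{th:amrz}, in the core case $G$ may also be assumed non-chordal.

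The two reductions each require a gluing lemma in the spirit of Theorem \ref{th:amrz}. For a cut-vertex $v$ I would use the subgraphs $G_i=G[C_i\cup\{v\}]$, where the $C_i$ are the components of $G-v$; each $G_i$ is isometric in $G$, so $\ov{xy}^{G_i}=\ov{xy}^{G}\cap(C_i\cup\{v\})$ for $x,y\in C_i\cup\{v\}$, and $\textsc{br}(G)=\sum_i\textsc{br}(G_i)$ since every bridge lies inside a single branch. The lemma would show that lines from distinct branches remain distinct and non-universal in $G$ up to a bounded overlap at $v$, so that summing $\ell(G_i)+\textsc{br}(G_i)\geq|G_i|$ yields the claim. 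For a nontrivial module $M$ the reduction is through the quotient $G/M$ and the factor $M$ with its induced submetric: a pair inside $M$ generates a line whose trace outside $M$ consists only of a complete distance-from-$M$ level, since every external vertex is equidistant from the two endpoints, so these pairs account for the lines of the factor, while a pair meeting the quotient lifts and blows up a line of $G/M$. Matching the two families to $M$ and to $G/M$ respectively, and checking that the only coincidences are forced to be universal, should recover the bound. A technical point is that the submetric on $M$ need not be a graph metric, so the induction should be carried out in the metric-space formulation rather than the purely graph-theoretic one.

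The hard part will be the core case, where no decomposition is available and, being $2$-connected, $G$ has no bridges, so one must produce $|G|$ pairwise distinct lines outright. My proposed handle is that a prime graph is twin-free, which is exactly to say that for every pair of vertices some third vertex distinguishes them by distance; I would try to upgrade this separation into an injection $v\mapsto\ov{x_vy_v}$ from vertices to lines, for instance by fixing a breadth-first layering from a diametral vertex and reading off, for each $v$, a pair whose generated line is pinned down by the unique distance signature of $v$. The obstacle, and the reason this remains a conjecture, is that distinct distance signatures do not by themselves force distinct lines: a long isometric cycle can make many candidate pairs generate the same line, so the injection can collapse. Controlling this collapse seems to demand a separate treatment of graphs of small diameter versus graphs containing a long isometric cycle, and it is precisely here that the weakening to a \emph{finite} exceptional set ${\cal F}_0$, rather than the explicit list of Theorem \ref{th:amrz}, should be exploited: it suffices to establish the bound once $|G|$ exceeds an absolute constant, letting all smaller prime $2$-connected configurations be swept into ${\cal F}_0$.
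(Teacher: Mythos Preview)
The statement you are attempting to prove is Conjecture~\ref{c:ampzBR}, and the paper does \emph{not} prove it: it is presented as an open problem from~\cite{AMRZ}, and the paper establishes only the bipartite special case (Theorem~\ref{t:mainbip}). So there is no ``paper's own proof'' to compare against, and your task was to prove an open conjecture.

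Your proposal is not a proof but a programme, and you say as much. The reductions via cut-vertices and modules are reasonable in spirit, though the module reduction already hides real work: the submetric on a module need not be a graph metric (as you note), and the induction hypothesis you would need---the metric-space version of the conjecture---is itself open and strictly stronger than what you are trying to prove. More importantly, your ``core case'' of prime $2$-connected non-chordal graphs is left entirely unresolved. You correctly identify the obstruction (distinct vertices can generate the same line, and long isometric cycles make this hard to control), but the sentence ``it suffices to establish the bound once $|G|$ exceeds an absolute constant'' is not an argument: it is a restatement of the conjecture. Sweeping small graphs into ${\cal F}_0$ does nothing for large prime $2$-connected graphs, and you give no mechanism that produces $|G|$ distinct lines in that regime.

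For perspective, the paper's bipartite result (Theorem~\ref{t:mainbip}) attacks exactly this difficulty in the restricted setting by counting only lines $\ov{xy}^G$ with $d(x,y)=2$. The structural Lemma~\ref{t:candy} and Proposition~\ref{p:uniqueline} show that, in a $2$-connected bipartite graph without twins, a line of width at least two forces one generator to dominate the other and places both on a long induced even cycle; this is precisely the control over ``line collapse'' that your BFS-signature idea lacks. That machinery is bipartite-specific (parity is used throughout), and no analogue for general graphs is known, which is why the full conjecture remains open.
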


In this work, we prove that a bipartite graph $G$ satisfies $\ell(G)+\textsc{br}(G)\geq |G|$ unless $G \in \{ C_4, K_{2,3}\}$. 
%It is known that bipartite graph satisfies Conjecture \ref{con:chch}; moreover, every pair of vertices at distance 1 induces a universal line. 
The proof is based on the study of the lines defined by vertices at distance 2. In this context, we prove two interesting results: first, we prove 
that given two vertices $x$ and $y$ at distance two in a graph $G$, the graph induced by $\overline{xy}^G$  either has diameter two or has a cut vertex in $\{x,y\}$. As a consequence, a 2-connected graph $G$ of diameter at least three 
can not have a universal pair whose vertices are at distance two.

Second, we prove that 2-connected bipartite graphs have more lines than vertices. We do that counting the lines generated by vertices at distance 2. At first glance, this restriction made the problem harder as it reduces the number of pairs of vertices that can generate lines. 
However, it also reduces the possibilities for two pairs of vertices to generate the same line. We think that this trade-off can be exploited in other contexts as well, since in general, it is not easy to characterize pairs of vertices that define the same line.

Our result also proves, for bipartite graphs, the following conjecture made by Zwols \cite{YZ}: if $\ell(G)<|G|$, then either $G$ has a bridge or it contains $C_4$ as induced subgraph. It also allows to extend Theorem \ref{th:amrz}, by adding bipartite graphs as an option for the induced subgraphs.

Notice that graphs of Figure \ref{f:smallgr} satisfy Conjecture \ref{con:chch} because they have  universal lines. Moreover, they have more than one pair of vertices that define universal lines. This is a phenomena that appears in all the examples of graphs with few different lines. Inspired in this observation, the following conjecture was proposed in \cite{AMRZ}:

\begin{conjecture}[Conjecture 2.3 in \cite{AMRZ}]\label{c:ampzUP}
Let $G=(V,E)$ be a connected graph with at least two vertices.
Then, $\ell(G)+\textsc{up}(G)\geq |V|$, where $\textsc{up}(G)$ denotes the number of universal pairs in $G$.
\end{conjecture}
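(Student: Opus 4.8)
The plan is to prove the inequality $\ell(G)+\textsc{up}(G)\geq |V|$ by exhibiting an injective map $\phi$ from $V$ into the disjoint union of the set of distinct lines of $G$ and the set of universal pairs of $G$; any such injection immediately yields the bound, since then $|V|\le \ell(G)+\textsc{up}(G)$, the universal line being counted once among the lines while each universal pair is counted separately. I would build $\phi$ by induction on $|V|$, following the decomposition scheme of Theorem \ref{th:amrz}: first dispose of the cases where $G$ has a cut vertex or a non-trivial module, and then concentrate on the $2$-connected, module-free case. In the cut-vertex case, if $v$ separates $G$ into pieces $G_1,\dots,G_t$, I would apply the inductive hypothesis inside each $G_i+v$ and argue that distinct local lines remain distinct, or collapse only onto a controllable number of global lines through $v$, so that the local injections glue into a global one; the module case is handled similarly by contracting each module to a single vertex and lifting lines back. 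This reduces everything to the heart of the matter, the $2$-connected case.

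For $2$-connected $G$ I would split on the diameter. If the diameter is at most two then $G$ is a $\{0,1,2\}$-metric space and the bound is exactly the statement already established in this paper for such spaces, so this case is closed. The interesting regime is diameter at least three, where the structural result proved earlier is the key lever: since $G$ is $2$-connected of diameter at least three, no universal pair can consist of two vertices at distance two, so every universal pair is either an edge or a pair at distance at least three. I would first show that universal pairs are scarce in this regime, ideally that $\textsc{up}(G)$ is bounded or that the universal pairs all pass through a common small set of vertices, using the fact that $\overline{xy}^G=V$ imposes a strong betweenness constraint on every vertex of $G$ simultaneously. Then I would produce $|V|-\textsc{up}(G)$ pairwise distinct non-universal lines, counting primarily the lines generated by pairs of vertices at distance two, exactly as in the bipartite argument: restricting to distance-two pairs makes it far less likely that two pairs generate the same line, and the diameter assumption guarantees a large supply of such pairs.

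The central lemma I would aim for is a distinctness criterion for distance-two lines analogous to the one behind the bipartite case: associate to each vertex $w\notin\{x,y\}$ a canonical distance-two pair whose line is charged to $w$, and show that this charging is injective modulo the universal lines, the deficit being absorbed by $\textsc{up}(G)$. The structural dichotomy available here, namely that the graph induced by a distance-two line $\overline{xy}^G$ either has diameter two or has a cut vertex in $\{x,y\}$, would be used to certify that two such lines coincide only in tightly constrained configurations, which can then be enumerated and matched against universal pairs.

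The hard part will be the $2$-connected, module-free case of diameter at least three without any further structural hypothesis. This case contains, as a sub-problem, Conjecture~\ref{con:chch} itself: when $\textsc{up}(G)=0$ the desired inequality is precisely $\ell(G)\ge|V|$, so a fully general argument here is as difficult as the main open problem in the area. I would therefore expect the bound to be provable unconditionally only for classes where the distance-two counting can be made tight---bipartite, chordal, and distance-hereditary graphs---with the complete resolution for arbitrary connected graphs hinging on a new lower bound for the number of distance-two lines in a general $2$-connected graph. A secondary obstacle is the bookkeeping of small exceptional graphs: as in Theorem~\ref{th:amrz} and in the bipartite result, a short list (beginning with $C_4$ and $K_{2,3}$) will defeat the naive counting and must be verified and excluded by hand, and one must check that the inductive gluing never reintroduces them.
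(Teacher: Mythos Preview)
The statement you are addressing is Conjecture~\ref{c:ampzUP}, and the paper does \emph{not} prove it; it is presented as an open conjecture. What the paper establishes is only the special case of 2-metric spaces (Theorem~\ref{t:diamtwo}), which in the graph setting covers precisely the diameter-at-most-two regime you invoke, and there it in fact proves the sharper bound $\ell^*(M)+\textsc{up}(M)\ge |V|$ counting only non-universal lines. Beyond that the paper offers nothing toward Conjecture~\ref{c:ampzUP}: its bipartite result (Theorem~\ref{t:mainbip}) concerns the different inequality $\ell(G)+\textsc{br}(G)\ge |V|$ of Conjecture~\ref{c:ampzBR}, not the universal-pair version.

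Your outline is not a proof, and you say so yourself. The reductions via cut-vertices and modules are plausible scaffolding, but the ``hard part'' you isolate is genuinely the entire problem: for a 2-connected graph of diameter at least three with $\textsc{up}(G)=0$, the conjecture becomes $\ell(G)\ge |V|$, which is exactly Conjecture~\ref{con:chch} for graph metrics and remains open. The hoped-for lemma that universal pairs are scarce in this regime buys nothing when there are none, and the distance-two line counting you propose is precisely the technique whose reach the paper explores---it closes the bipartite case for the bridge inequality, not the general case for the universal-pair inequality. So there is no gap to name beyond the one you have already named: your proposal reduces an open conjecture to an open problem at least as hard as itself.
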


In this work we study this conjecture in a more general setting. In particular, we prove that each 2-metric space $M=(V,d)$ satisfies
$\ell^*(M ) + \textsc{up}(M)\geq |V|$,
where
$\ell^*(M)$ denotes the number of distinct non-universal lines in $M$
and $\textsc{up}(M)$ denotes the number of universal pairs in $M$.
Notice that when $\textsc{up}(M ) = 0$ we have that $\ell(M ) = \ell^*(M )$.
Hence, our result implies that
Conjecture \ref{con:chch} holds for 2-metric spaces,
a result previously proved in \cite{ChCh11,Chvatal2}.
Our proof is from first principles then giving an alternative proof for this fact.

An important role in this work is played by \emph{pair of twins}.
We say that $(v,v')$ is a \emph{pair of twins} of a metric space $M=(V,d)$,
if $v$ and $v'$ are two distinct points in $V$ such that $d(v,v')\neq 1$ and for all $u\notin \{v,v'\}$, $d(v,u)=d(u,v')$. In a metric space induced by a connected graph, a pair of twins is usually called a pair of \emph{false} twins.

\section{Metric spaces defined by finite graphs}

In a metric space induced by a graph $G$, the distance between two vertices
is the length of a shortest path between them. 
As usual, $N_G(x)$ will denote the neighborhood of the vertex $x$.

%It is worth to stress that there are graphs $G$ of diameter two
%satisfying $\ell^*(G)+\textsc{br}(G)<|G|$. By instance, $G=C_4$ and $G=K_{2,3}$. This is coherent with Theorem \ref{t:diamtwo} since $C_4$ and $K_{2,3}$ have many 
%universal pairs. Indeed, since they are bipartite, it is easy to see that every pair of adjacent vertices is a universal pair.

Although our main result is about metric spaces defined by bipartite
graphs, we start by proving some preliminaries results which are valid for arbitrary graphs.
We shall use them in the proof of our main result.

%Now we prove a similar bound for the bridge case:

%\begin{lemma}\label{lem:bridge}
%Let $G$ be a connected graph, let $ab$ be a bridge of $G$ and 
%let $G_a$ and $G_b$ be the connected components of $G-ab$ containing $a$ and $b$, respectively. 
%Then, $\ell_2(G)\geq \ell_2(G_a)+\ell_2(G_b)-1$.
%\end{lemma}
%\begin{myp}{} As in the proof of Lemma \ref{lem:cutvertex}, 
%if $V_a$ (resp. $V_b$) denotes $V(G_a)$ (resp. $V(G_b))$), 
%we have that there are at least $\ell_2(G_a)+\ell_2(G_b)-1$ in ${\cal L}_2^G(V_a)
%\cup {\cal L}_2^G(V_b)$ as for each 
%$x,y\in V_a$, $\ov{xy}^G\in \{\ov{xy}^{G_a},\ov{xy}^{G_a}\cup V_b\}$. 
%\end{myp}

A crucial point in our development is that we only count lines defined by
vertices at distance two. The following lemma shows part of the structure of these lines: 

\begin{lemma}\label{t:candy} 
 Let $x,y$ be vertices of $G$ at distance 2. If two vertices $a$ and $b$ are such that $d(a,x)=d(a,y)+d(y,x)$ and $d(b,y)=d(b,x)+d(x,y)$, 
 then any path $P$ between $a$ and $b$ contained in $\ov{xy}^G$ contains the set $\{x,c',y\}$,  for some  $c'\in N_G(x)\cap N_G(y)$.
\end{lemma}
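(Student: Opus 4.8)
The plan is to exploit the explicit description of $\ov{xy}^G$ when $d(x,y)=2$. Writing $C=N_G(x)\cap N_G(y)$, every $z\in\ov{xy}^G\sm\{x,y\}$ satisfies either $d(x,z)+d(z,y)=2$, which forces $d(x,z)=d(z,y)=1$ and hence $z\in C$, or $|d(x,z)-d(z,y)|=2$; in the latter case set $A=\{z\in V\sm\{x,y\}: d(x,z)=d(z,y)+2\}$ and $B=\{z\in V\sm\{x,y\}: d(z,y)=d(x,z)+2\}$, so that $\ov{xy}^G$ is partitioned into $\{x,y\}\cup C\cup A\cup B$. The hypotheses then say precisely that $a\in A\cup\{y\}$ and $b\in B\cup\{x\}$.

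The first step is to show that $A$ can only be left through $y$ inside $\ov{xy}^G$. A vertex $z\in A$ has $d(z,y)\ge 1$ (as $z\ne y$), so $d(x,z)=d(z,y)+2\ge 3$; hence $z$ is adjacent neither to $x$ (else $d(x,z)=1$) nor to any $c\in C$ (else $d(x,z)\le d(x,c)+1=2$), and moreover $z$ is not adjacent to any $w\in B$, since an edge $zw$ would give $d(x,z)-d(x,w)=\big(d(z,y)+2\big)-\big(d(w,y)-2\big)=d(z,y)-d(w,y)+4$, which is impossible because the left-hand side has absolute value at most $1$ while the right-hand side has absolute value at least $3$. Thus $N_G(z)\cap\ov{xy}^G\subseteq A\cup\{y\}$ for every $z\in A$, so an $a$--$b$ path $P$ in $\ov{xy}^G$ with one endpoint in $A$ (or equal to $y$) and the other outside $A$ must contain $y$. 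Interchanging the roles of $x$ and $y$ (hence of $A$ and $B$) gives the symmetric statement $N_G(z)\cap\ov{xy}^G\subseteq B\cup\{x\}$ for $z\in B$, and therefore that $P$ must also contain $x$.

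It remains to produce the common neighbour $c'$. For this I would track the potential $f(z)=d(x,z)-d(z,y)$ along $P=(a=p_0,\dots,p_k=b)$. On $\ov{xy}^G$ one has $f(z)\in\{-2,0,2\}$ (directly from the two cases above), and $|f(p_i)-f(p_{i+1})|\le 2$ since adjacent vertices have distances to any fixed vertex differing by at most $1$. As $f(p_0)=2$ and $f(p_k)=-2$, and a single step cannot pass from $2$ to $-2$, some $p_i$ has $f(p_i)=0$; but $f(z)=0$ together with $z\in\ov{xy}^G$ forces $d(x,z)=d(z,y)=1$, i.e.\ $z\in C=N_G(x)\cap N_G(y)$. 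Taking $c'=p_i$ then yields $\{x,c',y\}\subseteq V(P)$, as required.

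The routine parts are the short distance inequalities ruling out edges from $A$ to $x$ and to $C$; the genuine content is the identity $d(x,z)-d(x,w)=d(z,y)-d(w,y)+4$ forbidding edges between $A$ and $B$, together with the observation that the potential takes the value $0$ only on $C$. I expect the only real care to be needed in the degenerate cases $a=y$ or $b=x$, where ``$P$ contains $y$'' (resp.\ ``$P$ contains $x$'') is immediate and the separation argument above is applied only to the other endpoint, while the potential argument is unchanged.
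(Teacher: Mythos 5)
Your proposal is correct and follows essentially the same route as the paper: both arguments rest on the potential $d(x,\cdot)-d(\cdot,y)$ taking values in $\{-2,0,2\}$ on $\ov{xy}^G$ and changing by at most $2$ along an edge, which yields the common neighbour $c'$, and the same distance inequalities then force $y$ (resp.\ $x$) onto the path. The only difference is packaging: you phrase the latter step as ``$A$ and $B$ have no edges in the line except to $y$ and $x$ respectively,'' while the paper identifies the predecessor of the first vertex with potential $0$ directly as $y$.
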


\begin{myp}{}
For each $v \in G$, we define the function $\Delta(v) := d(y,v) - d(x,v)$. 
Since $d(x,y)=2$, the function $\Delta$ takes only values in $\{-2, -1, 0, 1, 2\}$; moreover, for every $u \in \ov{xy}^G, \Delta(u) \in \{-2,0,2\}$.  

Since $a  \in \ov{xy}^G$ and $d(a,x)=d(a,y)+d(y,x)$, then $\Delta(a) = -2$. Equivalently,  we deduce $\Delta(b) = 2$.  
Notice that for two adjacent vertices $u$ and $v$ we have that $|\Delta(u) -\Delta(v)| \leq 2$; hence, 
for $u$ and $v$ adjacent and both in $\ov{xy}^G$, we have that $|\Delta(u) -\Delta(v)| \in \{0,2\}$. We deduce that there must exist a vertex $c'$ in $P$ such that $\Delta(c') = 0$. Let us assume that $c'$ is the first vertex in $P$ from $a$ to $b$ such that $\Delta(c')=0$. Since $c' \in \ov{xy}^G$, then $c'\in N_G(x)\cap N_G(y)$ and
the neighbor $w$ of $c'$ in $P$ closer to $a$ satisfies $\Delta(w)=-2$ and $d(x,w) \leq 2$; it follows that $d(w,y)=0$, which implies that $w=y\in P$. With a similar argument applied to $b$ we can prove that $x\in P$.
\end{myp}

\begin{coro}\label{c:candy}
Let $x, y$ be two vertices of $G$ at distance $2$. Let $z \in \ov{xy}^G$ with $z \notin N_G(x) \cap N_G(y)$ and let $P$ be a path between $z$ and $x$ such that 
$P \subseteq \ov{xy}^G$ and $y \notin P$. Then $d(z,y)=d(z,x)+d(x,y)$.
\end{coro}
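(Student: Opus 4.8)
The plan is to read the Corollary off from Lemma~\ref{t:candy} using the auxiliary function $\Delta$ introduced in the proof of that lemma. For each vertex $v$ put $\Delta(v):=d(y,v)-d(x,v)$; since $d(x,y)=2$ and $z\in\ov{xy}^G$, we have $\Delta(z)\in\{-2,0,2\}$, and the asserted equality $d(z,y)=d(z,x)+d(x,y)$ is precisely $\Delta(z)=2$. So it suffices to rule out $\Delta(z)=0$ and $\Delta(z)=-2$. (First note we may assume $z\notin\{x,y\}$: if $z=x$ then $d(z,y)=2=0+d(x,y)$ and we are done, while $z=y$ is impossible since $z$ is an endpoint of $P$ and $y\notin P$.)

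The case $\Delta(z)=0$ is immediate: then $d(x,z)=d(z,y)$, so the membership $z\in\ov{xy}^G$ together with $d(x,y)=2$ can only come from $d(x,z)+d(z,y)=2$, forcing $d(x,z)=d(z,y)=1$, i.e. $z\in N_G(x)\cap N_G(y)$, contradicting the hypothesis on $z$.

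For the case $\Delta(z)=-2$: this reads $d(z,x)=d(z,y)+d(y,x)$, which is exactly the first hypothesis of Lemma~\ref{t:candy} with $a:=z$. Taking $b:=x$, the second hypothesis $d(b,y)=d(b,x)+d(x,y)$ holds trivially since $d(x,y)=0+d(x,y)$, and $a=z\neq x=b$. Thus the path $P\subseteq\ov{xy}^G$ from $z$ to $x$ satisfies the hypotheses of Lemma~\ref{t:candy}, so $P$ contains $\{x,c',y\}$ for some $c'\in N_G(x)\cap N_G(y)$; in particular $y\in P$, contradicting $y\notin P$. Having excluded $\Delta(z)\in\{-2,0\}$ we get $\Delta(z)=2$, which is the claim. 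I do not expect a real obstacle here; the only points needing a little care are justifying the degenerate choice $b=x$ in Lemma~\ref{t:candy} and the short arithmetic in the $\Delta(z)=0$ case.
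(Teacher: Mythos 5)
Your proof is correct and follows essentially the same route as the paper: rule out the case $d(x,y)=d(x,z)+d(z,y)$ using $z\notin N_G(x)\cap N_G(y)$, then exclude $d(z,x)=d(z,y)+d(y,x)$ by applying Lemma~\ref{t:candy} with $a=z$ and the degenerate choice $b=x$, which would force $y\in P$. Your extra care with the $\Delta$-bookkeeping and the corner case $z=x$ is a welcome tightening of details the paper leaves implicit, but the underlying argument is identical.
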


\begin{myp}{}
Since $z \notin N_G(x) \cap N_g(y)$, then $d(x,y) \neq d(x,z) + d(z,y)$.
By contradiction suppose that $d(z,x)=d(z,y)+d(x,y)$. Since $P \subseteq \ov{xy}^G$, Lemma \ref{t:candy} implies  $y\in P$, which is a contradiction. 
\end{myp}

\begin{coro}\label{c:universal}
Let $G=(V,E)$ be a 2-connected graph and let $x, y$ be two vertices of $G$ at distance $2$. If $\ov{xy}^G$ is a universal line, then $(x,y)$ is a pair of twins and $V=\{x,y\}\cup N_G(x)$.
\end{coro}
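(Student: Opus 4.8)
The plan is to combine the function $\Delta$ introduced in the proof of Lemma~\ref{t:candy} with the $2$-connectedness hypothesis and Corollary~\ref{c:candy}. Write $\Delta(v) = d(y,v) - d(x,v)$. Since $d(x,y)=2$ and $\overline{xy}^G = V$, \emph{every} vertex $v$ belongs to the line, so $\Delta(v) \in \{-2,0,2\}$; this partitions $V$ into $A = \{v : d(v,x) = d(v,y)+2\}$ (which contains $y$), $B = \{v : d(v,y)=d(v,x)+2\}$ (which contains $x$), and $C = \{v : d(v,x)=d(v,y)\}$. The first, routine, fact I would record is that $C = N_G(x)\cap N_G(y)$: a common neighbour of $x$ and $y$ has $\Delta = 0$, and conversely, if $v \in \overline{xy}^G$ with $\Delta(v)=0$, then $d(v,x)+d(v,y) = d(x,y) = 2$ (the alternative $|d(v,x)-d(v,y)| = 2$ is impossible), which forces $d(v,x)=d(v,y)=1$.

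The core of the proof is to show $A = \{y\}$; by the symmetry of the hypotheses and of $\overline{xy}^G$ in $x$ and $y$, the same argument will then give $B = \{x\}$. Suppose for contradiction that there is $a \in A$ with $a \neq y$. Then $d(a,x) = d(a,y)+2 \geq 3$, so $a \notin N_G(x)$, and in particular $a \notin N_G(x)\cap N_G(y)$. Because $d(x,y)=2$ there is a common neighbour of $x$ and $y$, so $|V|\geq 3$, and $2$-connectedness gives that $G-y$ is connected; hence there is a path $P$ from $a$ to $x$ with $y \notin P$, and trivially $P \subseteq V = \overline{xy}^G$. Corollary~\ref{c:candy} then applies and yields $d(a,y) = d(a,x) + d(x,y) = d(a,x) + 2$, contradicting $d(a,x) = d(a,y)+2$.

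It remains to assemble the conclusion. From $A = \{y\}$ and $B = \{x\}$ we get $V = \{x,y\}\cup C = \{x,y\}\cup (N_G(x)\cap N_G(y))$. Since $x$ and $y$ are non-adjacent, $N_G(x) \subseteq V \setminus \{x,y\} = N_G(x)\cap N_G(y)$, so $N_G(x) = N_G(x)\cap N_G(y)$, and symmetrically for $y$; thus $N_G(x) = N_G(y)$ and $V = \{x,y\}\cup N_G(x)$. Finally, every $u \notin \{x,y\}$ lies in $N_G(x)\cap N_G(y)$, so $d(x,u) = 1 = d(y,u)$, and $d(x,y) = 2 \neq 1$; hence $(x,y)$ is a pair of twins.

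I do not expect a genuine obstacle here; the only point needing a moment's care is checking that the path produced by $2$-connectedness is admissible input for Corollary~\ref{c:candy}, and this is exactly where universality is used, since it makes the requirement $P \subseteq \overline{xy}^G$ automatic. Everything else is bookkeeping with the values of $\Delta$.
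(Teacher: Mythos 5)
Your proof is correct and rests on the same engine as the paper's: Corollary~\ref{c:candy} applied to a path that avoids one of $x,y$, which 2-connectedness supplies. The only difference is organizational --- the paper argues that a neighbour of $x$ outside $N_G(y)$ would force $x$ to be a cut vertex (leaving the exclusion of vertices far from both $x$ and $y$ implicit), whereas your $\Delta$-partition disposes of all of $A\setminus\{y\}$ and $B\setminus\{x\}$ in one stroke, which is if anything slightly more complete.
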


\begin{myp}{}
By contradiction suppose there exists a vertex $z$ which is neighbor of $x$ but not of $y$; $z \in \ov{xy}^G$ because $(x,y)$ is a universal pair. By Corollary \ref{c:candy} we have $d(z,y)=d(z,x)+d(x,y)$.  Moreover, every path between $z$ and $y$ contains $x$, by Lemma \ref{t:candy}. This implies that $x$ is a cut vertex; a contradiction because $G$ is a 2-connected graph. 
\end{myp}

Corollary \ref{c:universal} implies that lines defined by vertices at distance 2 are non universal in  2-connected graphs with diameter at least three. 
This motivates us to count the number of distinct lines defined by vertices at distance two. 
The set of lines defined by vertices at distance two is denoted by ${\cal L}_2^G$ and its cardinality by $\ell_2(G)$. For a subset $U$ of vertices of $G$ we shall denote ${\cal L}_2^G(U)$ the set of lines defined in $G$ by two vertices in $U$ at distance two.

The next lemma is a refinement of part $(2)$ in the proof of Theorem 2.1 in \cite{AMRZ}. 
Here, instead of considering arbitrary lines, we only consider lines  defined by vertices at distance two.
The proof is the same, but we present it here for the sake of completeness.

\begin{lemma}\label{lem:cutvertex} Let $G$ be a bridgeless graph
such that $G =G_1\cup G_2$, $V(G_1)\cap V(G_2)=\{v\}$ and $E(G)=E(G_1)\cup E(G_2)$. 
Then, 
$$\ell_2(G)\geq \ell_2(G_1)+\ell_2(G_2)-1+|N_{G_1}(v)||N_{G_2}(v)|.$$
\end{lemma}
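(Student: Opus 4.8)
The plan is to exhibit three families of lines inside $\mathcal{L}_2^G$ whose sizes and mutual overlaps yield the bound. For $x,y\in V(G_1)$ with $d_{G_1}(x,y)=2$ one has $d_G(x,y)=2$, so $\overline{xy}^G\in\mathcal{L}_2^G$; let $\mathcal{A}$ be the set of all such lines, $\mathcal{B}$ the analogous set obtained from $G_2$, and $\mathcal{C}=\{\overline{ab}^G : a\in N_{G_1}(v),\ b\in N_{G_2}(v)\}$, where $d_G(a,b)=2$ since $a\sim v\sim b$ while $a\not\sim b$ (any edge $ab$ would lie in $E(G_1)$ or $E(G_2)$, forcing $a$ or $b$ to equal $v$). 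We may assume $G_1$ and $G_2$ both have at least two vertices, as otherwise $G$ equals one of them and the inequality is immediate; then bridgelessness (note a bridge of $G_i$ would be a bridge of $G$) forces $|N_{G_i}(v)|\ge 2$.

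First I would record two distance facts coming from $v$ being a cut vertex. For $z\in V(G_1)$ one has $d_G(x,z)=d_{G_1}(x,z)$, hence $\overline{xy}^G\cap V(G_1)=\overline{xy}^{G_1}$. For $z\in V(G_2)\setminus\{v\}$, writing $p=d_{G_1}(x,v)$, $q=d_{G_1}(y,v)$, a direct check of the defining conditions (the ``sum'' condition forces $p=q=0$, impossible) shows $z\in\overline{xy}^G$ if and only if $|p-q|=2$, a condition independent of $z$. Thus $\overline{xy}^G\cap(V(G_2)\setminus\{v\})$ is either empty or all of $V(G_2)\setminus\{v\}$, and in the latter case $v\in\overline{xy}^{G_1}$ since $v$ itself satisfies that difference condition. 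The symmetric statements hold with $G_1$ and $G_2$ interchanged.

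Then I would prove the cardinality and disjointness claims. (1)~$|\mathcal{A}|\ge\ell_2(G_1)$: since $\overline{xy}^G\cap V(G_1)=\overline{xy}^{G_1}$, two distance-$2$ pairs of $V(G_1)$ defining the same line in $G$ define the same line in $G_1$, so the partition of such pairs by their $G$-line refines the partition by their $G_1$-line; likewise $|\mathcal{B}|\ge\ell_2(G_2)$. (2)~$|\mathcal{C}|=|N_{G_1}(v)|\,|N_{G_2}(v)|$: computing as above, $v\in\overline{ab}^G$ and $z\in\overline{ab}^G\cap(V(G_1)\setminus\{v\})$ forces $z=a$ or $d_{G_1}(v,z)=d_{G_1}(a,z)+1$, so $\overline{ab}^G\cap N_{G_1}(v)=\{a\}$ and, symmetrically, $\overline{ab}^G\cap N_{G_2}(v)=\{b\}$; hence $(a,b)$ is recoverable from the line. (3)~$\mathcal{A}\cap\mathcal{B}\subseteq\{V(G)\}$: for $L\in\mathcal{A}\cap\mathcal{B}$, viewing $L$ as a $G_2$-line shows $L\cap(V(G_1)\setminus\{v\})$ is empty or all of $V(G_1)\setminus\{v\}$, and it is nonempty because $L$, as a $G_1$-line, contains a defining vertex different from $v$; so $L\supseteq V(G_1)\setminus\{v\}$, symmetrically $L\supseteq V(G_2)\setminus\{v\}$, and since $L$ (as a $G_1$-line) contains a vertex of $V(G_2)\setminus\{v\}$ the last remark above gives $v\in L$, so $L=V(G)$. (4)~$\mathcal{C}\cap(\mathcal{A}\cup\mathcal{B})=\emptyset$: if $\overline{ab}^G\in\mathcal{A}$, then as it contains $b\in V(G_2)\setminus\{v\}$ it must contain all of $V(G_2)\setminus\{v\}$, contradicting $\overline{ab}^G\cap N_{G_2}(v)=\{b\}$ and $|N_{G_2}(v)|\ge 2$; symmetrically $\overline{ab}^G\notin\mathcal{B}$.

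Combining these, $\ell_2(G)\ge|\mathcal{A}\cup\mathcal{B}\cup\mathcal{C}| = |\mathcal{A}|+|\mathcal{B}|+|\mathcal{C}|-|\mathcal{A}\cap\mathcal{B}| \ge \ell_2(G_1)+\ell_2(G_2)+|N_{G_1}(v)|\,|N_{G_2}(v)|-1$. The main obstacle is the bookkeeping in claim~(3): pinning down that the unique line that $\mathcal{A}$ and $\mathcal{B}$ can share is the full vertex set $V(G)$ — which is exactly where the ``$-1$'' originates — together with the verification in claim~(4) that an across-the-cut line in $\mathcal{C}$ singles out precisely one neighbor of $v$ on each side and therefore cannot coincide with an extended line coming from $G_1$ or $G_2$.
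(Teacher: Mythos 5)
Your proof is correct and follows essentially the same route as the paper: the dichotomy that a line generated inside $G_i$ restricted to the other side is either empty or everything (the paper's observation that $\ov{uv}^G\in\{\ov{uv}^{G_i},\ov{uv}^{G_i}\cup V_{3-i}\}$), the resulting bound of at most one shared line between the two sides, and the cross-cut lines $\ov{ab}^G$ meeting $N_{G_i}(v)$ in exactly one vertex so that they are pairwise distinct and disjoint from the other two families. Your write-up is somewhat more explicit than the paper's (e.g., in justifying why the unique shared line must be $V(G)$), but the decomposition and counting are the same.
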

\begin{myp}{} Let $V_i=V(G_i)$, for $i=1,2$.
It is easy to see that for each pair $u,v\in V_i$ we have that 
\begin{equation}\label{eq:cutset}
\ov{uv}^G\in\{\ov{uv}^{G_i},\ov{uv}^{G_i}\cup V_{3-i}\} \text{, for } i=1,2.
\end{equation}
Therefore, 
at most one line belongs to the intersection ${\cal L}_2^G(V_1)\cap {\cal L}_2^G(V_2)$; hence, there are at least $\ell_2(G_1)+\ell_2(G_2)-1$ lines  in ${\cal L}_2^G(V_1)\cup {\cal L}_2^G(V_2)$.

Now we prove that there are at least $|N_{G_1}(v)||N_{G_2}(v)|$ lines of $G$ not in ${\cal L}_2^G(V_1)\cup {\cal L}_2^G(V_2)$. Let $u_i$ be a neighbor of $v$ in $G_i$, for each $i=1,2$. We have that $\ol{u_1u_2}^G\in \mc L_2^G$ and it contains exactly one neighbor $u_i$ of $v$ in $G_i$, for each $i=1,2$. Since $v$ has degree at least two in $G_i$, for each $i=1,2$, as otherwise $G$ has a bridge, at least one neighbor of $v$ in $G_i$ does not belong to $\ol{u_1u_2}^G$; it follows from (\ref{eq:cutset}) that $\ol{u_1u_2}^G\notin {\cal L}_2^G(V_1)\cup {\cal L}_2^G(V_2)$.

Let $u_i,v_i$ be neighbors of $v$ in $G_i$, for each $i=1,2$. We
have that $\{u_1,u_2\}\neq \{v_1,v_2\}$ implies $\ol{u_1u_2}^G\neq \ol{v_1v_2}^G$; 
then, there are at least $ |N_{G_1}(v)||N_{G_2}(v)|$ lines in ${\cal L}_2^G\setminus ({\cal L}_2^G(V_1)\cup {\cal L}_2^G(V_2))$.

\end{myp}

\subsection{Bipartite graphs}

In this section we consider metric spaces defined by bipartite graphs. 

Our starting point is the following simple observation: given a vertex $v$ in a bipartite graph and two vertices $u$ and $w$ in $N_G(v)$, we have that $N_G(v)\cap \ov{uw}^G=\{u,w\}$; it follows that for each vertex $v$ in a bipartite graph $G$, $\ell_2(G)\geq \binom{d(v)}{2}$. Hence, \emph{locally}, a vertex in a bipartite graph has many pairs of vertices that defines different lines. 

Two problems appear when one tries to move this idea from local to global. On the one hand, two o more vertices can have the the same neighborhoods (pairs of twins or modules);  on the other hand, the same line can be generated by different pairs in several neighborhoods.  

Both problems appear in $C_4$, where $\ell_2(C_4)=\ell(C_4)=1$. This graph has two pairs of twins and every pair of vertices at distance two generates a universal line.
%Here, the unique line appears in the two distinct neighborhood.

The first situation also appears in $K_{2,3}$, where $\ell_2(K_{2,3})=\ell(K_{2,3})=4$. 
In  this case, all the vertices in the bigger independent set have the same neighborhood.
%every pair of vertices at distance two defines a different line but we have only two neighborhoods. 
In the following figure we show two cases where the second problem appears: 
%the first situation appears isolated in the graph $2C_4$ obtained by gluing two $C_4$ in a vertex
%$v$. In fact, $\ov{uv}^G=\ov{u'v}^G$, where $u$ and $u'$ are the vertices at distance
%two of $v$ in each copy of $C_4$. However, in this case $\ell_2(2C_4)=\ell(2C_4)=7=|2C_4|$.
\begin{figure}[h]\label{f:sameline}
\centering
\begin{minipage}[c]{0.5\textwidth}
\centering
\begin{tikzpicture}

\vertex (A)  {$y$};
\vertex (B) [below right of=A] {};
\vertex (C) [above right of=A] {};
\vertex (D) [above right of=B] {$x$};
\vertex (E) [above right of=D] {};
\vertex (F) [below right of=D] {};
\vertex (G) [above right of=F] {$t$};

\draw (A) -- (C) -- (D) -- (E) -- (G) -- (F) -- (D) -- (B) -- (A);

\end{tikzpicture}
\subcaption{}
\end{minipage}%
\begin{minipage}[c]{0.5\textwidth}
\centering
\begin{tikzpicture}

\vertex (A)  {$y$};
\vertex (B) [below right of=A] {};
\vertex (C) [above right of=A] {};
\vertex (D) [above right of=B] {$x$};
\vertex (H) [above right of=C] {};
\vertex (I) [right of=D] {};
\vertex (J) [right of=H] {};
\vertex (K) [right of=J] {};
\vertex (L) [right of=I] {$s$};
\vertex (E) [above right of=L] {};
\vertex (F) [below right of=L] {};
\vertex (G) [above right of=F] {$t$};

\draw (D) -- (B) -- (A) -- (C) -- (H) -- (J) -- (K) -- (E) -- (G) -- (F) -- (L);
\draw (C) -- (D) -- (I) -- (L) -- (E);

\end{tikzpicture}

\subcaption{}
\end{minipage}
\caption{In (a) $\ov{yx} = \ov{xt}$ and in (b) $\ov{yx} = \ov{st}$ }
\end{figure}
\bigskip

However, the following lemma shows that the existence of many lines locally is enough to satisfy Conjecture \ref{c:ampzBR} for complete bipartite graphs.
\begin{lemma}\label{l:complete}
 If $G=K_{p,q}$ with $2\leq p\leq q$, then $\ell_2(G)= \binom{p}{2}+\binom{q}{2}$ unless $p=q=2$.
 In particular, if $p+q\geq 6$, then $\ell_2(G)\geq p+q=|G|$.
\end{lemma}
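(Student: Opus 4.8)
The plan is to fix the bipartition $V(G)=A\cup B$ with $|A|=p$ and $|B|=q$, and to use that in $K_{p,q}$ two vertices are at distance $2$ precisely when they lie in the same part (so there are $\binom p2+\binom q2$ pairs at distance two). The first step is to describe these lines explicitly. If $\{u,w\}\subseteq A$, then every $z\in B$ has $d(u,z)=d(z,w)=1$, so $d(u,w)=d(u,z)+d(z,w)$ and $z\in\ov{uw}^G$; on the other hand any $z\in A\sm\{u,w\}$ has $d(u,z)=d(z,w)=2$, so neither $d(u,z)+d(z,w)=2$ nor $|d(u,z)-d(z,w)|=2$ holds, and $z\notin\ov{uw}^G$. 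Hence $\ov{uw}^G=B\cup\{u,w\}$, and symmetrically $\ov{uw}^G=A\cup\{u,w\}$ when $\{u,w\}\subseteq B$.

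Next I would count distinct lines. Since $A\cap B=\emptyset$, the pair $\{u,w\}$ is recovered from the line $B\cup\{u,w\}$ as $(B\cup\{u,w\})\cap A$; hence distinct pairs inside $A$ yield distinct lines, $\binom p2$ of them, and likewise $\binom q2$ distinct lines arise from pairs inside $B$. It remains to check that these two families are disjoint unless $p=q=2$: if $B\cup\{u,w\}=A\cup\{u',w'\}$ with $\{u,w\}\subseteq A$ and $\{u',w'\}\subseteq B$, then $B\subseteq A\cup\{u',w'\}$ together with $A\cap B=\emptyset$ forces $q\le 2$, and symmetrically $p\le 2$; with $2\le p\le q$ this gives $p=q=2$. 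This accounts for the stated exception: in $C_4$ both kinds of lines equal $V$, so $\ell_2(C_4)=1\neq\binom 22+\binom 22$. Therefore $\ell_2(G)=\binom p2+\binom q2$ whenever $(p,q)\neq(2,2)$.

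Finally, for the ``in particular'' clause, the inequality $\binom p2+\binom q2\ge p+q$ rearranges to $p(p-3)+q(q-3)\ge 0$. This holds when $p,q\ge 3$; and if $p=2$ then $p+q\ge 6$ forces $q\ge 4$, so $q(q-3)\ge 4>2=-p(p-3)$. Since $2\le p\le q$ and $p+q\ge 6$, one of these two cases always applies, giving $\ell_2(G)\ge p+q=|G|$. I do not anticipate a real obstacle here; the only point requiring care is the bookkeeping around the exceptional case $p=q=2$, and checking that the explicit line description is applied correctly in the boundary case $p=2$, where the unique line coming from $A$ is in fact the universal line $V=A\cup B$.
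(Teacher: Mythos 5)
Your proof is correct and follows essentially the same route as the paper: identify each line generated by a pair in one part as that pair together with the opposite part, then count, with the $C_4$ case as the sole collision. You simply spell out the details (recovering the pair from the line, disjointness of the two families, and the arithmetic for $\binom p2+\binom q2\ge p+q$) that the paper leaves implicit.
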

\begin{myp}{} 
Let $X$ and $Y$ be the independent sets of $K_{p,q}$. Given two vertices $a$ and $b$ of $G$ at distance two we have  $\ov{ab}^G=X\cup \{a,b\}$, if $a,b\in Y$ and $\ov{ab}^G=Y\cup \{a,b\}$, if $a,b\in X$.
Hence, when $p+q\geq 5$ each pair of vertices in the same independent set defines a distinct line in ${\cal L}_2^G$.
\end{myp}

In order to control the second problem, we need to characterize the pair of vertices that define the same line.
We define the \emph{width} of a line $\ell\in {\cal L}_2^G$ as the number of pair of  vertices $\{x,y\}$ with $d(x,y)=2$ and $\ell=\ov{xy}^G$. 
We now prove that the existence of lines of width at least two forces some structure of the graph. We use this structure to prove, in the next section, our main result.

Let $N^2_G(y)$ denotes the set of vertices at distance two of $y$. Given four vertices
$y,x,s$ and $t$, we say that $[yxst]$ holds if there is a shortest path $P$ between $y$ and $t$ containing $x$ and $s$ such that $x$ belongs to the subpath of $P$ between $y$ and $s$.
Equivalently, $[yxst]$ holds if and only if 
\begin{eqnarray*}
d(y,t)&=&d(y,x)+d(x,t)\\
& = & d(y,x)+d(x,s)+d(s,t)\\
& = & d(y,s)+d(s,t).
\end{eqnarray*}

To ease the presentation we denote by $xPy$ the subpath of a path $P$ 
between two of its vertices $x$ and $y$.

\begin{proposition}\label{p:uniqueline} Let $G$ be a bipartite graph, 
 $x,y,s$ and $t$ vertices of $G$ such that $d(x,y)=d(s,t)=2$ and $\ov{xy}^G=\ov{st}^G$. If $[yxst]$ holds, then either $y$  (resp. $t$) is a cut vertex,
 or it is dominated by $x$ (resp. $s$). 
 
 Moreover, when $G$ is 2-connected we have the following:
 \begin{enumerate}[(i)]
  \item \label{s:zneighbor}  
  For each $z\in N^2_G(y)$ and each $w\in N^2_G(t)$, 
  $d(z,t)=d(x,t)=d(y,s)=d(w,y)$.
\item  \label{s:ztot} For each $z\in N^2_G(y), z\neq x$ and for each $v$ for which $d(z,t)=d(z,v)+d(v,t)$ holds, $d(v,s)=d(v,t)$.
Similarly, for each $w\in N^2_G(t),w\neq s$ and for each $u$ for which $d(y,w)=d(y,u)+d(u,w)$ holds,
$d(u,y)=d(u,x)$.

\item \label{s:cycle} The vertices $x$ and $s$ belongs to an induced cycle of length $2(d(x,s)+2)$.
\end{enumerate}
\end{proposition}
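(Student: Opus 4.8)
The plan is to analyze the structure forced by the hypotheses $d(x,y)=d(s,t)=2$, $\ov{xy}^G=\ov{st}^G$, and $[yxst]$ step by step, starting from the basic dichotomy and then upgrading to the finer claims under 2-connectivity. First I would look at $y$. If $y$ is not a cut vertex, I want to show $y$ is dominated by $x$, i.e.\ $N_G(y)\subseteq N_G(x)\cup\{x\}$. Take a neighbor $z$ of $y$ with $z\neq x$ (if none exists $y$ is a leaf and hence either a cut vertex or $G$ is trivial); since $G$ is bipartite and $d(x,y)=2$, $z$ is at even distance from... wait, $z$ is adjacent to $y$ so $d(x,z)$ is odd (bipartiteness, since $d(x,y)=2$), hence $d(x,z)\in\{1,3,\dots\}$. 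Now $z\in\ov{xy}^G=\ov{st}^G$, so I can use Corollary \ref{c:candy} (with the roles of $x,y$): if $z\notin N_G(x)\cap N_G(y)$ then along any path in $\ov{xy}^G$ from $z$ to $x$ avoiding $y$ we would get $d(z,y)=d(z,x)+d(x,y)$, forcing $d(z,x)$ small and eventually $z$ adjacent to $x$ after all, or else every $z$--$x$ path meets $y$ and $y$ is a cut vertex by Lemma \ref{t:candy}. This is where I would carry out the case distinction carefully; the argument for $t$ dominated by $s$ (or $t$ a cut vertex) is symmetric via the relation $[yxst]$ read backwards as $[ts xy]$.

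For the 2-connected case, the hypothesis plus the first part force that neither $y$ nor $t$ is a cut vertex, so $y$ is dominated by $x$ and $t$ is dominated by $s$; in particular $N_G^2(y)\ni x$ and $N_G^2(t)\ni s$ are nonempty. For part \eqref{s:zneighbor}, the key is that $[yxst]$ gives $d(y,t)=d(y,x)+d(x,t)=d(y,s)+d(s,t)=2+d(x,s)+2$ wait, let me recompute: $d(y,t)=d(x,t)+2=d(y,s)+2$ and $d(y,t)=2+d(x,s)+2$, so $d(x,t)=d(y,s)=d(x,s)+2$. Now take $z\in N_G^2(y)$, $z\neq x$. Since $y$ is dominated by $x$, $z$ and $x$ share the neighbor(s) of $y$, so $z\in\ov{xy}^G$ (as $d(x,y)=2=d(x, \text{common neighbor})+d(\text{common neighbor},y)$... actually $z$ realizes $\Delta=0$ only if $z\in N_G(x)\cap N_G(y)$, but $d(y,z)=2\neq 1$, so more care: $z\in\ov{xy}^G$ iff $d(x,z)\in\{0,2\}$ using the $\Delta$-function from Lemma \ref{t:candy}'s proof, i.e.\ iff $d(x,z)=2$). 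So every $z\in N_G^2(y)$ lies in $\ov{xy}^G=\ov{st}^G$ and has $d(x,z)=2$. Then I would apply Corollary \ref{c:candy} with $(s,t)$ in place of $(x,y)$: either $z\in N_G(s)\cap N_G(t)$, impossible since $d(s,t)=2$ would make $z$ too close... hmm, or $d(z,t)=d(z,s)+d(s,t)$ after checking that some $z$--$s$ path in $\ov{st}^G$ avoids $t$ (use 2-connectivity to reroute, or Lemma \ref{t:candy} to see $t$ would be a cut vertex otherwise). Combined with $d(z,x)=2$ and the triangle inequality around the path $z$–$x$–$s$ (both at the right distances) this pins $d(z,t)=d(x,t)$; symmetrically $d(w,y)=d(y,s)$ for $w\in N_G^2(t)$, and all four quantities coincide by the computation above.

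Part \eqref{s:ztot} is the heart of the "unique line" phenomenon and I expect it to be the main obstacle. Given $z\in N_G^2(y)$, $z\neq x$, and $v$ with $d(z,t)=d(z,v)+d(v,t)$, I want $d(v,s)=d(v,t)$, i.e.\ $v$ lies "on the $s$-side." The idea is: $z\in\ov{st}^G$ by part \eqref{s:zneighbor}'s setup, and $d(z,t)=d(x,t)=d(x,s)+d(s,t)$, so in fact $[zst?]$-type betweenness holds with $s$ between $z$ and $t$; then any $v$ with $d(z,t)=d(z,v)+d(v,t)$ must have $v$ between $z$ and $t$ on some shortest path, and I claim such $v$ is in fact on the $z$–$s$ portion or equals a vertex from which $s$ is still between $v$ and $t$ — invoking Lemma \ref{t:candy} again (with endpoints chosen as $z$ and $t$, noting $z$ has $\Delta$-value $-2$ relative to $\{s,t\}$) to force any shortest $z$–$t$ path through a common neighbor of $s$ and $t$ and through $s$ itself, hence $d(v,t)=d(v,s)+d(s,t)$ or $v$ is between $s$ and $t$; bipartiteness plus $d(s,t)=2$ then collapses the latter to $d(v,s)=d(v,t)$. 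The symmetric statement for $w,u$ follows by the $[yxst]\leftrightarrow[tsxy]$ symmetry. Finally, for \eqref{s:cycle}: take a shortest $x$–$s$ path $Q_1$ realizing $d(x,s)$, and a shortest $y$–$t$ path; the betweenness $[yxst]$ says we may take the latter to pass through $x$ and $s$. Now build a second $x$–$s$ path $Q_2$ of length $d(x,s)+4$ by going $x$–$y$ (length $2$, through the common neighbor), then... no: rather, I would produce two internally disjoint $x$–$s$ paths whose lengths are $d(x,s)$ and $d(x,s)+4$ using $y$ on one side and $t$ on the other — the vertex $y$ (two steps from $x$, on the far side from $s$ by \eqref{s:zneighbor}) and $t$ (two steps from $s$) each contribute a detour of $+2$ — and then use parts \eqref{s:ztot}/\eqref{s:zneighbor} to argue the cycle $Q_1\cup Q_2$ is induced, since any chord would create a vertex violating the distance constraints just established. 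The length is $d(x,s)+(d(x,s)+4)=2(d(x,s)+2)$ as claimed. The delicate point throughout is bookkeeping the $\Delta$-function and repeatedly citing Lemma \ref{t:candy} with correctly chosen endpoints; I would isolate the claim "every $z\in N_G^2(y)$ satisfies $d(x,z)=2$ and $z\in\ov{st}^G$" as a small sublemma used in all three parts.
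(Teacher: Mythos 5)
There are two genuine gaps, one in the first (and hardest) part of the statement and one that propagates through parts (\ref{s:zneighbor}) and (\ref{s:ztot}).

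For the dichotomy ``$y$ is a cut vertex or dominated by $x$,'' your sketch only ever works with the pair $x,y$ and with paths from a neighbor $z$ of $y$ to $x$, concluding that ``every $z$--$x$ path meets $y$'' via Lemma \ref{t:candy}. But Lemma \ref{t:candy} and Corollary \ref{c:candy} only control paths \emph{contained in} $\ov{xy}^G$; a $z$--$x$ path that exits the line escapes both lemmas, so no cut-vertex conclusion follows. Worse, the statement is simply false if one drops the hypotheses $\ov{xy}^G=\ov{st}^G$ and $[yxst]$ (in a long even cycle, two vertices at distance two are neither cut vertices nor does one dominate the other), so any correct argument must use $s$ and $t$ essentially. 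The paper's proof does exactly this: it takes a path $P$ from $b\in N_G(y)\setminus N_G(x)$ to $t$ avoiding $y$, locates the first vertex $w'$ of $P$ outside $\ov{xy}^G$, and then uses the $y$--$s$ path guaranteed by $[yxst]$ together with $\ov{xy}^G=\ov{st}^G$ and bipartite parity to produce the contradiction $d(x,y)=0$. That mechanism---exploiting the exit point from the common line against distances to $s$---is the missing idea in your sketch.

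In parts (\ref{s:zneighbor}) and (\ref{s:ztot}) you assert that every $z\in N^2_G(y)$ with $z\neq x$ satisfies $d(x,z)=2$ and hence $z\in\ov{xy}^G=\ov{st}^G$. The first claim is right (domination of $y$ by $x$ forces $d(x,z)=2$), but the membership is backwards: with $d(x,z)=d(y,z)=2$ and $d(x,y)=2$, neither $|d(x,z)+d(z,y)|$ nor $|d(x,z)-d(z,y)|$ equals $2$, so $z\notin\ov{xy}^G$. This matters: the correct proof of (\ref{s:zneighbor}) and (\ref{s:ztot}) rests on $z\notin\ov{st}^G$, which by bipartite parity gives $d(z,s)=d(z,t)$ and then $d(z,t)\le d(z,x)+d(x,s)=d(x,t)$; your route instead applies Corollary \ref{c:candy} to conclude $d(z,t)=d(z,s)+2$, which is false and would not yield $d(z,t)=d(x,t)$ in any case. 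Part (\ref{s:cycle}) of your sketch (two internally disjoint $x$--$s$ paths of lengths $d(x,s)$ and $d(x,s)+4$, with inducedness from the distance identities) is in the right spirit and matches the paper's construction, but it depends on the corrected versions of (\ref{s:zneighbor}) and (\ref{s:ztot}).
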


\begin{myp}{} To prove the first part, we proceed by contradiction assuming that $y$ is neither a cut vertex nor dominated by $x$. 

Since $y$ is not dominated by $x$, there exists $b\in N_G(y) \setminus N_G(x)$. From the  definition of $\ov{xy}^G$, we have that $b\in \ov{xy}^G$ and $d(b,x)=d(b,y)+d(y,x)$. 

Let $P$ be a path from $b$ to $t$ not containing $y$. It exists as $y$ is not a cut vertex.  Since $d(b,x)=d(b,y)+ d(y,x)$, $d(y,t)=d(y,x)+d(x,t)$ and $y$ does not belong to $P$; from Lemma \ref{t:candy} we deduce that $P$ is not completely contained in $\ov{yx}^G$. 

Let $w' \in P$ be the closer vertex to $b$ which is not in $\ov{yx}^G$, and $w$ be its neighbor in $P$ closer to $b$, which will belong to $\ov{xy}^G$.
 
If $w\in N_G(x)\cap N_G(y)$, then the path $bPwx$ would be completely contained in $\ov{xy}^G$; but this contradicts Lemma \ref{t:candy} because $d(b,x)=d(b,y)+d(y,x)$ and $d(y,x)=d(y,x)+d(x,x)$. 

Since $w'$ is the first vertex not in $\ov{yx}^G$ of $P$, the path $ybPw \subseteq \ov{yx}^G$. From Lemma \ref{t:candy} we get that this path does not contain $x$; it follows from Corollary \ref{c:candy} that 
\begin{equation} \label{eq:1}
d(w,x)=d(w,y)+d(y,x) = d(w,y) + 2.
\end{equation}

Since the graph $G$ is bipartite and $w' \notin \ov{xy}^G$, we have that $d(x,w')=d(y,w')$. As $|d(y,w)-d(y,w')| \leq 1$ and  $|d(x,w)-d(x,w')|\leq 1$, it follows  that the only way to satisfy Equation \ref{eq:1} is when  
\begin{equation} \label{eq:2}
d(y,w')=d(y,w)+1. 
\end{equation}

Since $[yxst]$ holds, there exist a $y$-$s$-path $Q$ contained in $\ov{st}^G$ that does not contain $t$. Hence, the path $wPbQ$ is a $w$-$s$-path contained in $\ov{st}^G$ and Corollary \ref{c:candy} implies $\btw{w}{s}{t}$. As before, we conclude that
\begin{equation} \label{eq:3}
d(s,w')=d(s,w)+1.
\end{equation}

Since $\btw{w}{s}{t}$, we get that a shortest $w$-$s$-path must be contained in $\ov{st}^G = \ov{yx}^G$, which implies it contains the vertices $y$ and $x$ (Lemma \ref{t:candy}); in particular, we have that  

\begin{equation} \label{eq:4}
d(w,s) = d(w,y) + d(y,s).
\end{equation}

The following chain of equality holds:
\begin{align*}
d(s,w')& = d(s,w)+1 \tag*{(by (\ref{eq:3}))} \\ 
       & = d(s,y)+d(y,w)+1   \tag*{(by (\ref{eq:4}) )} \\
       & = d(s,y)+d(y,w')  \tag*{(by (\ref{eq:2}))}\\
       & = d(s,x)+d(x,y)+d(y,w') \\ %\tag*{(by $\btw{y}{x}{s}$)}\\
       & = d(s,x) + d(x,y) + d(x,w') \tag*{($w' \notin \ov{yx}^G$)} \\
       & \geq d(s,w') + d(x,y)  
\end{align*}
which implies $d(x,y)= 0$, a contradiction. Hence, there is not such vertex $b$ and the vertex $y$ is a cut vertex or it is dominated by $x$.

Now we assume that $G$ is a 2-connected graph. 
\begin{enumerate}[(i)]

\item We first prove that for each $z\in N^2_G(y)$, $d(z,t)=d(x,t)$. It is obvious for $z=x$. For $z \neq x$ we have that $d(x,t)+2=d(y,t)\leq d(z,t)+2$, by the triangle inequality. Hence, $d(x,t)\leq d(z,t)$. 

The other inequality comes from the fact that $x$ dominates $y$; which implies that $d(z,t)=d(z,s)= 2$; hence, $d(z,t)=d(z,s)\leq d(x,s)+2=d(x,t)$. 

By a symmetric argument, for each $w\in N^2_G(t)$ 
we have that $d(w,y)=d(s,y)$.  As $d(y,t)=d(y,s)+2=d(x,t)+2$ we get the result. 

\item Let $v$ be such that $d(z,t)=d(z,v)+d(v,t)$ holds. On one hand, since $z \notin \ov{st}^G$, we have that  
$$ d(z,v)+d(v,t) = d(z,t) = d(z,s) \leq d(z,v)+d(v,s) $$

which implies that $d(v,t) \leq d(v,s)$; on the other hand, since $s$ dominates $t$, we have that $d(v,s)\leq d(v,t)$.Hence, $d(v,s)=d(v,t)$. The symmetric analysis shows the statement for each $u$ satisfying $d(y,w)=d(y,u)+d(u,w)$.

\item Let $z\in N^2_G(y)$ and let $P$ be a shortest path between $z$ and $t$. We denote by $w = P\cap N^2_G(t)$ and by $u = N_G(t)\cap P$. Notice that by (\ref{s:ztot}), no vertex in $zPw$ belongs to $\ov{st}^G$.

Let $Q$ be a shortest path between $x$ and $s$. Then, 
$Q$ and $P$ are vertex disjoint, because $Q$ is contained in $\ov{st}^G$,
which implies that $C=vPuQ$ is a cycle containing $x$ and $s$, where $v\in N_G(y)\cap N_G(z)\cap N_G(x)$. 

Now we prove that the cycle is induced. Assume that there is a chord $ab$ in $C$. If $a=v$ then $b \in N^2(y)$ and contradicts the fact that $Q$ and $P$ are shortest paths. A similar analysis shows that $u$ can not be a vertex of the chord. Hence, we can assume that $a\in P$ and $b\in Q$. From triangular inequality we get that  
$$d(z,t)=d(z,s)\leq d(z,a)+1+d(b,s) \text{ and }  d(w,y)=d(w,x)\leq d(w,a)+1+d(b,x); $$
but, we know from (\ref{s:ztot}) that $d(z,t) = d(x,s)+2 = d(w,y)$. Replacing in the previous inequalities and summing them we obtain
$$ 2d(x,s) + 4 \leq d(z,a)+d(a,w)+d(x,b) + d(b,s) + 2.$$

Since $P$ and $Q$ are shortest path, it follows that $\btw{z}{a}{w}$ and $\btw{x}{b}{s}$ which imply
$$ d(x,s) + 2 \leq d(z,w)$$

a contradiction because $d(x,s) = d(z,w)$ by (\ref{s:ztot}).
\end{enumerate}
\end{myp}

In order to apply Proposition \ref{p:uniqueline} we need to understand in which situations 
two pairs of vertices $x,y$, and $s,t$, with $d(x,y)=d(s,t)=2$, and
generating the same line, do satisfy $[yxst]$. 

\begin{lemma} \label{l:whenaligned}
Let $y,x,s,t$ such that $\{x,y\}\neq \{s,t\}$, $d(x,y)=d(s,t)=2$ and $\overline{xy}^G=\overline{st}^G$.
If $G$ is 2-connected, bipartite and has no pairs of twins, then $\max\{d(x,s),d(x,t),d(y,s),d(y,t)\}$
is at least four. Moreover, if $d(y,t)=\max\{d(x,s),d(x,t),d(y,s),d(y,t)\}$, then $[yxst]$ holds. 
\end{lemma}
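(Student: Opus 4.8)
The plan is to prove the statement in two parts, mirroring its structure. First I would establish the lower bound of four on $\max\{d(x,s),d(x,t),d(y,s),d(y,t)\}$, and then I would show that if this maximum is realized by $d(y,t)$, then the alignment $[yxst]$ holds. Throughout I would work in the bipartite graph $G$, using crucially that in bipartite graphs $\overline{uv}^G$ with $d(u,v)=2$ splits the vertex set according to the value of $\Delta$, i.e. every $z$ in the line satisfies exactly one of $d(z,u)=d(z,v)$, $d(z,u)=d(z,v)+2$, or $d(z,u)=d(z,v)-2$, and that $\overline{uv}^G \setminus (N_G(u)\cap N_G(v))$ is partitioned into the ``$u$-side'' and the ``$v$-side.'' Since $\overline{xy}^G=\overline{st}^G$, the sets $\{x,y\}$, $\{s,t\}$, and $N_G(x)\cap N_G(y)=N_G(s)\cap N_G(t)$ all lie inside this common line, and I would carefully track on which side each of $x,y,s,t$ falls relative to both pairs.

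For the lower bound of four, I would argue by contradiction: assume $\max\{d(x,s),d(x,t),d(y,s),d(y,t)\}\le 3$, and since $G$ is bipartite and $d(x,y)=d(s,t)=2$ (both even) while $\{x,y\}\ne\{s,t\}$, parity forces most of these distances into $\{1,2\}$ or all into $\{2\}$ — in any case the four vertices are ``close.'' The key sub-case is when all four are pairwise at distance $2$ or coincide in pairs; then $x$ and $s$ (say) have the same side-structure, and I would use Corollary \ref{c:universal}-style reasoning together with the absence of pairs of twins. Concretely: if the two pairs are close, I can compare $N_G(x)$ and $N_G(s)$ (or $N_G(y)$ and $N_G(t)$); the fact that the lines coincide and the sides match up should force $N_G(x)=N_G(s)$ with $d(x,s)\ne 1$, i.e. $(x,s)$ is a pair of twins, contradicting the hypothesis. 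The bipartiteness is what makes the side-decomposition rigid enough to pin down neighborhoods from line-equality at short distances; I would handle the handful of parity cases ($d(x,s)\in\{0,1,2,3\}$ and similarly for the others) and show each either produces a pair of twins or is outright impossible.

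For the second part, suppose $d(y,t)=\max\{d(x,s),d(x,t),d(y,s),d(y,t)\}\ge 4$. I want to show there is a shortest $y$--$t$ path through $x$ then $s$, equivalently $d(y,t)=d(y,x)+d(x,s)+d(s,t)=d(y,x)+d(x,t)=d(y,s)+d(s,t)$. First I would locate $x$ relative to the pair $\{s,t\}$: since $d(y,t)$ is the maximum and $d(x,y)=2$, the triangle inequality gives $d(x,t)\ge d(y,t)-2$ and $d(x,s)\ge d(y,s)-2$ — but also $d(x,t)\le d(y,t)$, and I'd want $d(x,t)=d(y,t)-2$, which says $x$ is ``between'' $y$ and $t$ and on the side of the line $\overline{st}^G$ closer to $t$ (so $\Delta$ with respect to $(s,t)$ is $+2$ at $x$, using bipartiteness so the only options are $0,\pm2$). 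The case $d(x,t)=d(y,t)$ would make $x$ too far from $t$; I'd rule it out by noting $x\in\overline{st}^G$ forces $d(x,s)=d(x,t)$, then $d(x,s)=d(x,t)=d(y,t)\ge 4$ while $d(x,s)\le d(x,y)+d(y,s)=2+d(y,s)\le 2+d(y,t)$ — not immediately contradictory, so I'd instead use that $y\in\overline{st}^G$ too and compare the side of $y$: maximality of $d(y,t)$ among the four distances pushes $y$ to the $s$-side and $t$ far, and then a short path argument (Lemma \ref{t:candy} / Corollary \ref{c:candy}) threading $y$ to $t$ through the common neighbor set forces the alignment. Symmetrically I would place $s$ between $x$ and $t$. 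Assembling: $d(y,x)+d(x,t)=d(y,t)$ and $d(x,s)+d(s,t)=d(x,t)$, and since all distances are consistent with the side-decomposition, the three displayed equalities follow, which is exactly $[yxst]$.

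The main obstacle I anticipate is the first part — ruling out the ``close'' configurations without a pair of twins. The parity bookkeeping is routine, but the step where line-equality at distance $2$ or $3$ is leveraged to conclude $N_G(x)=N_G(s)$ needs care: one must check that no vertex can distinguish $x$ from $s$, which means chasing, for an arbitrary $u$, the position of $u$ on the $x$-side / $y$-side of $\overline{xy}^G$ and on the $s$-side / $t$-side of $\overline{st}^G=\overline{xy}^G$, and showing these positions determine $d(u,x)$ and $d(u,s)$ identically. Bipartiteness (all distances within a line to the two generating vertices differ by $0$ or $2$) is the lever that makes this work, but the case analysis on where $x,y,s,t$ sit relative to the two side-partitions — there are a few genuinely different configurations — is where the real work lies.
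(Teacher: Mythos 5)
There is a genuine gap in your first part. After the parity split, the two real cases are: all four cross-distances even (so at most $2$), and all four odd (so at most $3$). The even case falls to a short betweenness computation ($y\in\ov{st}^G$ and maximality force $s=y$, and symmetrically $x=t$, contradicting $\{x,y\}\neq\{s,t\}$) --- no twins needed. The hard case is the odd one with $\max=3$, and there your proposed mechanism, ``line-equality at short distance forces $N_G(x)=N_G(s)$, hence a pair of twins,'' does not apply: in that configuration one is driven to $d(y,s)=d(x,t)=1$ and then $N_G(x)\cap N_G(y)=\{s\}$ and $N_G(s)\cap N_G(t)=\{x\}$, so in particular $d(x,s)=1$ and $x,s$ cannot be twins in a bipartite graph (nor can any of the other pairs among $x,y,s,t$ be forced to be twins). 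The contradiction that actually closes this case is of a different nature: one shows that every vertex $z\notin\{x,y,s,t\}$ would lie in $\ov{xy}^G$, i.e.\ $(x,y)$ would be a universal pair, which is impossible in a $2$-connected twin-free graph by Corollary \ref{c:universal}. Your sketch never invokes $2$-connectivity in part one, and without it (or some substitute) the distance-$3$ case does not close.

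In the second part your endpoint placements are fine: $t\in\ov{xy}^G$ with $d(y,t)$ maximal and $\geq 4$ immediately gives $d(y,t)=d(x,t)+2$, and symmetrically $d(y,t)=d(y,s)+2$. But the remaining equality $d(x,t)=d(x,s)+2$ is not ``symmetric'': since $x\in\ov{st}^G$, the alternative $d(x,s)=d(x,t)+2=d(y,t)$ is perfectly consistent with the maximality of $d(y,t)$ (it does not exceed it), so maximality alone cannot rule it out. The missing step is to route through a common neighbour $z\in N_G(x)\cap N_G(y)$: one gets $d(z,t)=d(x,t)+1$ and $d(z,s)\leq d(y,s)+1=d(z,t)$, whence $z\in\ov{st}^G$ forces $d(z,t)=d(z,s)+2$, and then $d(x,s)\leq d(z,s)+1=d(x,t)$ pins down $d(x,s)=d(x,t)-2$. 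Your appeal to Lemma \ref{t:candy}/Corollary \ref{c:candy} gestures in this direction but does not supply this argument.
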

\begin{proof}
Let $\beta=\max\{d(x,s),d(x,t),d(y,s),d(y,t)\}$ and let $y$ and $t$ be such that $\beta=d(y,t)$.
Since $\{x,y\}\neq \{s,t\}$ we have that $\beta\geq 1$.
If $\beta=1$, then $d(x,s)=d(x,t)=d(y,s)=d(y,t)=1$.
Moreover, since $(x,y)$ is not a pair of twins, there is $z$ which is adjacent to $y$ 
and not adjacent to $x$; then, $z\in \overline{xy}^G$ and, since $\beta=1$, $z\notin \{s,t\}$.
As $G$ is bipartite, $d(z,s)=d(z,t)=2$ which implies the contradiction $z\notin \overline{st}^G$,
since $\overline{xy}^G=\overline{st}^G$.

When $\beta=2$ we cannot have $d(y,s)=d(y,t)+2$. As  $y\in \overline{st}^G$
we get that $d(y,t)=d(y,s)+2$ which implies that $s=y$. 
Similarly, as $t\in \overline{xy}^G$ we conclude that $x=t$. Thus, we get the contradiction
$\{x,y\}=\{s,t\}$.

As before, when $\beta=3$ we cannot have $d(y,s)=d(y,t)+2$,  hence,  
$d(y,t)=d(y,s)+2$ and then $d(y,s)=1$.
Similarly, as $t\in \overline{xy}^G$ we conclude that $d(x,t)=1$.

Let $z\in N_G(x)\cap N_G(y)$ which imply $z\in \overline{xy}^G$.
As $d(x,t)=1$ and $d(y,s)=1$ we have that $d(z,s),d(z,t)\leq 2$; but 
$d(y,t)=3$ implies $d(z,t)=2$. Since $z\in \overline{st}^G$ we get that $z=s$.
In a similar way we can prove that $N_G(s)\cap N_G(t)=\{x\}$.

By Corollary \ref{c:universal}, in a 2-connected graph with no pairs of twins 
there is no universal pairs.
We shall get a contradiction by proving that $(x,y)$ is a universal pair.
Let us assume that $z\notin \ov{xy}^G=\ov{st}^G$.
Then, $z\notin \{x,y,s,t\}$. 
As $\beta=3$ a shortest path $P$ between $z$ and $x$ either contains $y$ or contains $t$. 
In the first situation, $z\in \overline{xy}^G$, so we can assume
that $t$ is in $P$, that is to say, $d(z,x)=d(z,t)+1$.
By a symmetric argument we can assume that a shortest path between $z$ and $s$ must contains
$y$. Hence, $d(z,s)=d(z,y)+1$.

Since $z\notin \ov{xy}^G$ and $G$ is bipartite we know that $d(z,x)=d(z,y)\geq 2$.
Therefore, $$d(z,s)=d(z,y)+1=d(z,x)+1=d(z,t)+2$$ which contradicts $z\notin \ov{st}^G$.

Therefore $\beta=d(y,t)\geq 4$.
Since $t\in \ov{xy}^G$ and $y\in \ov{st}^G$ we get that
$d(y,t)=d(x,t)+d(x,y)$ and $d(y,t)=d(y,s)+d(s,t)$. 

For $z\in N_G(x)\cap N_G(y)$ we have that 
$d(y,t)\leq d(z,t)+1\leq d(x,t)+2=d(y,t)$ and then 
$$d(z,t)=d(x,t)+1=d(y,t)-1\geq 3.$$
We also have that $d(z,s)\leq d(y,s)+1=d(y,t)-1=d(z,t)$.
Since $z\in \ov{st}^G$ we get that $d(z,t)=d(z,s)+2$.
By using this equality we get that 
$$d(x,s)\leq d(z,s)+1=d(z,t)-1=d(x,t).$$
Since $x\in \ov{st}^G$ we get that $d(x,s)+2=d(x,t)$
and then $[yxst]$ holds.

\end{proof}

\subsection{Proof of the main result}

%We prove our result by induction on the number of vertices of a graph $G$. 
%In order to apply the induction hypothesis we look for bridges
%of $G$, cut vertex when $G$ is bridgeless or for twins when 
%$G$ is 2-connected. This leads us to prove that a 2-connected bipartite 
%graph with no twins satisfies $\ell_2(G)\geq |G|$.
%In order to clarify the presentation we present this result first.

In this section we prove our main result. We start by considering 
2-connected graphs without pairs of twins.

\subsubsection*{2-connected bipartite graphs with no pairs of twins}

Before proving our result we need some definitions. Let $G$ be a 2-connected graph with no pairs of  twins and let $x,y$ be vertices of $G$ such that $w(\ov{xy}^G)>1$. From Lemma \ref{l:whenaligned}
and Proposition \ref{p:uniqueline} we know that $x$ dominates $y$ or $y$ dominates $x$, since none of them is a cut vertex. As $G$ has no pairs of twins only one of these options can hold. We define $X$ as the set of vertices $x$ such that there is a vertex $y\in N^2_G(x)$ with $w(\ov{xy}^G)>1$ and such that $x$ dominates $y$. 

For each $x\in X$, let $Y_x$ be the set of vertices $y\in N^2_G(x)$ with 
$w(\ov{xy}^G)>1$ and set $Y=\cup_{x\in X}Y_x$.

\begin{lemma}\label{l:xcapy}
For $X$ and $Y$ defined above, $X \cap Y = \emptyset$ when $G$ is a 2-connected bipartite
graph without pairs of twins.
\end{lemma}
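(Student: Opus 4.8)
The plan is to argue by contradiction, first reducing the statement to a more symmetric form. Write $a \to b$ to mean that $d(a,b) = 2$, that $w(\ov{ab}^G) > 1$, and that $a$ dominates $b$; by the discussion preceding the statement, for every pair $\{a,b\}$ at distance $2$ with $w(\ov{ab}^G) > 1$ exactly one of $a \to b$ and $b \to a$ holds. I will first check that $X \cap Y \neq \emptyset$ if and only if there are $x, x' \in X$ with $x \to x'$. If $x, x' \in X$ and $x \to x'$, then $d(x,x')=2$ and $w(\ov{xx'}^G) > 1$, so $x' \in Y_x \subseteq Y$ and hence $x' \in X \cap Y$. Conversely, if $v \in X \cap Y$, pick $x_1 \in X$ with $v \in Y_{x_1}$; then $d(x_1,v) = 2$ and $w(\ov{x_1 v}^G) > 1$, so either $x_1 \to v$ or $v \to x_1$, and since $x_1, v \in X$ this produces the required pair. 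Thus it is enough to show that no $x, x' \in X$ satisfy $x \to x'$.

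Assume such $x$ and $x'$ exist. A key auxiliary fact is the transitivity of domination at distance two: if $a$ dominates $b$, $b$ dominates $c$, and $d(a,b) = d(b,c) = 2$, then $d(a,c) = 2$ and $a$ dominates $c$. Indeed $N_G(c) \subseteq N_G(b) \subseteq N_G(a)$, so $d(a,c) \leq 2$; and $a \neq c$ (otherwise $N_G(a) = N_G(b)$, making $(a,b)$ a pair of twins) and $a \not\sim c$ (otherwise $a \in N_G(c) \subseteq N_G(b)$, contradicting $d(a,b) = 2$), whence $d(a,c) = 2$, and $N_G(c) \subseteq N_G(a)$ gives domination. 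Since $x' \in X$, there is $y'$ with $x' \to y'$; transitivity applied to $x, x', y'$ gives $d(x,y') = 2$ with $x$ dominating $y'$, and the absence of twins makes the inclusions $N_G(y') \subsetneq N_G(x') \subsetneq N_G(x)$ proper. (Likewise, since $x \in X$, $x \to y_0$ for some $y_0$, so $x$ dominates at least two vertices at distance two.)

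The next step is to bring in the partner pairs. Since $w(\ov{xx'}^G) > 1$ and $w(\ov{x'y'}^G) > 1$, Lemma~\ref{l:whenaligned} and Proposition~\ref{p:uniqueline} provide, after relabelling, pairs $\{s,t\}$ and $\{s',t'\}$ at distance $2$ with $\ov{xx'}^G = \ov{st}^G$ and $\ov{x'y'}^G = \ov{s't'}^G$ such that $[x'xst]$ and $[y'x's't']$ hold, $s$ dominates $t$, $s'$ dominates $t'$, and, writing $p = d(x,s)$ and $p' = d(x',s')$, one has $d(x,t) = d(x',s) = p+2$, $d(x',t) = p+4$, $d(x',t') = d(y',s') = p'+2$ and $d(y',t') = p'+4$. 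Using that $x, y' \in N^2_G(x')$ and $x, x' \in N^2_G(y')$, Proposition~\ref{p:uniqueline}(i) forces $d(y',t) = p+2$ and $d(x,t') = p'+2$. I would then feed these into Proposition~\ref{p:uniqueline}(ii) and~(iii): part~(ii) pins down the distances from $t$ and $t'$ to the vertices lying on shortest paths issuing from $x$, $x'$ and $y'$, and together with the parity constraint valid in a bipartite graph this should force a violation of the triangle inequality; alternatively, transporting the induced cycle of part~(iii) along the chain $y', x', x$ and producing a chord reproduces the contradiction obtained in the proof of~(iii). The degenerate cases $p=0$ (so $s=x$) and $p'=0$ (so $s'=x'$) would require a separate, more elementary argument.

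The step I expect to be the main obstacle is precisely this last one: combining the two partner-pair structures around the nested domination chain $N_G(y') \subsetneq N_G(x') \subsetneq N_G(x)$ and ruling out, uniformly in the offsets $p$ and $p'$, every configuration that the distance identities above still permit. The transitivity fact together with Proposition~\ref{p:uniqueline}(i) reduces the problem to a small system of distance equations among $x, x', y', s, t, s', t'$, but checking that these cannot hold simultaneously --- equivalently, that a $2$-connected bipartite graph with no pair of twins cannot carry two nested width-two lines whose defining pairs are aligned in the sense of the relation $[yxst]$ --- requires careful bookkeeping.
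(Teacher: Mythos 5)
Your setup is sound and, up to relabelling, essentially the paper's: you reduce to two vertices $x,x'\in X$ with $x$ dominating $x'$ at distance two, observe that $x'\in X$ supplies a further $y'$ with $x'\to y'$, and extract from Lemma~\ref{l:whenaligned} and part~(\ref{s:zneighbor}) of Proposition~\ref{p:uniqueline} the identities $d(x',t)=d(x,t)+2$ and $d(y',t)=d(x,t)$ for the partner pair $\{s,t\}$ of $\{x,x'\}$. The genuine gap is the final step: you stop at ``this should force a violation of the triangle inequality,'' deferring to parts~(\ref{s:ztot}) and~(\ref{s:cycle}), to a second partner pair $\{s',t'\}$, and to degenerate cases in the offsets $p$ and $p'$. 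As written the argument does not reach a contradiction, and you correctly flag this as the unresolved obstacle.

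The missing observation is one line, and it is exactly how the paper closes the proof. Since $x'$ dominates $y'$ and $d(y',t)=d(x,t)=d(x,s)+2\geq 2$, the last internal vertex of any shortest $t$--$y'$ path is a neighbour of $y'$, hence of $x'$; therefore $d(x',t)\leq d(y',t)$. Combined with $d(y',t)=d(x,t)=d(x',t)-2$, which follows from part~(\ref{s:zneighbor}) of Proposition~\ref{p:uniqueline} applied to $y'\in N^2_G(x')$, this yields $d(x',t)\leq d(x',t)-2$, a contradiction. (This is the same mechanism as your transitivity lemma: domination can only decrease distances to a third point, while Proposition~\ref{p:uniqueline} forces it to decrease $d(\cdot,t)$ by $2$ twice along the chain $x,x',y'$, which is impossible since $d(y',t)=d(x,t)$.) With this inserted your proof is complete and coincides with the paper's; the second partner pair, parts~(\ref{s:ztot}) and~(\ref{s:cycle}), and the case split on $p$ and $p'$ can all be deleted.
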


\begin{myp}{}
By contradiction, suppose there exists $y \in X \cap Y$. 
As $y\in Y$, there is $x \in X$ such that $y \in Y_x$. Let $s,t \in V$ such that $\ov{yx}^G = \ov{st}^G$. 
Since $G$ is a 2-connected bipartite
graph without pairs of twins, by Lemma \ref{l:whenaligned} we have that 
$d(y,t)=\max\{d(x,s),d(x,t),d(y,s),d(y,t)\}\geq 4$ and $[yxst]$ holds.
By part (\ref{s:zneighbor}) of  Proposition \ref{p:uniqueline} we know that $x$ dominates $y$.

Since $y\in X$, there is $z \in N^2(y)$ such that $y$ dominates $z$. 
From Proposition \ref{p:uniqueline}  we know that $d(z,t) = d(x,t)=d(y,t)-2$.
But then we get the contradiction: $d(y,t)\leq d(z,t)< d(y,t)$.   
\end{myp}

%From the second statement of Proposition \ref{p:uniqueline},
%a vertex of $Y$ dominates no vertex. Indeed,  let us assume that $y$ dominates $z$. 
%Then $z\in N^2_G(y)$. But, from Proposition \ref{p:uniqueline}  $d(z,t)=d(x,t)$. 
%Let  $z'\in N_G(z)$ such that $d(z',t)=d(z,t)-1$. 
%As we are assuming that $y$ dominates $z$, the vertex $z'$ is a neighbor of $y$. 
%Then, $d(y,t)\leq d(z',t)+1=d(z,t)=d(x,t)$, and we get a contradiction.
%Moreover, as a vertex of $X$ dominates
%at least one vertex, we have that $X$ and $Y$ are disjoint sets.

From Corollary \ref{c:universal} we know that a 2-connected graph $G$ without pairs of twins has no universal pairs $(x,y)$, with $d(x,y)=2$. 
Hence, in order to prove our result for these graphs, we have
to prove that there are at least $|G|$ distinct non-universal lines defined by pairs 
of vertices at distance two.

To this end, we define a function $f$ from the set of vertices of the graph into the set of lines of $G$. The function $f$ associates to each vertex $v$ a line generated by $v$ and a vertex in $N^2_G(v)$, that we denote by $g(v)$. 
If $f$ is injective, then the number of distinct non-universal lines defined by pairs 
of vertices at distance two is al least the number of vertices, and we are done.

Function $f$ could not be injective for two reasons.
The first reason is that there are distinct vertices $u$ and $v$ such that 
$\{v,g(v)\}=\{u,g(u)\}$. This is equivalent to $g^2(u)=u$.
%This forces us to define the function $g$ such that $g^2(u)\neq u$, for each $u\in V$.

If $g^2(w)=w$ for no vertex $w\in V$, 
then $f$ still could not be injective
if there are distinct vertices $u$ and $v$ 
such that $\overline{vg(v)}^G=\overline{ug(u)}^G$.
From Proposition \ref{p:uniqueline}
we know that in this case $v\in X$ and $g(v)\in Y$ or $v\in Y$ and $g(v)\in X$. 
Hence, either $v$ dominates $g(v)$ or $g(v)$ dominates $v$. 

Therefore, when defining $g(v)$ it is important to try to choose $g(v)$ such that 
$g^2(v)\neq v$ and neither $v$ dominate $g(v)$ nor $g(v)$ dominate $v$. 

One way to guarantee these two properties 
is that $v$ and $g(v)$ belong to an induced cycle of length at least six.
In Figure \ref{f:6cycle} we show the case of a cycle of length six. 
If the vertices of the cycle are labeled $v_0,v_1,\ldots,v_{2k+1}$, then
by defining $g(v_i)=v_{i+2}$, for every $i=0,\ldots,2k-1$, $g(v_{2k})=v_0$ and $g(v_{2k+1})=v_1$,
we get the desired property. Indeed, in this case we 
have that $g^2(v_i)\neq v_i$, for each $i=0,\ldots,2k+1$ and since the 
cycle is induced there is no vertex in the cycle dominated by another vertex in the cycle.

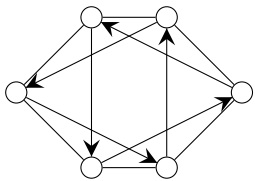
\begin{figure}[h]
\centering
\begin{tikzpicture}

\vertex[circle,  minimum size=8pt](1) at  (0,1) {};
\vertex[circle,  minimum size=8pt](2) at  (1,0) {};
\vertex[circle,  minimum size=8pt](3) at  (2,0) {};
\vertex[circle,  minimum size=8pt](4) at  (3,1) {};
\vertex[circle,  minimum size=8pt](5) at  (2,2) {};
\vertex[circle,  minimum size=8pt](6) at  (1,2) {};
\draw (1)--(2) -- (3) -- (4) -- (5) -- (6) -- (1);

\draw[decoration={markings,mark=at position 1 with
    {\arrow[scale=2,>=stealth]{>}}},postaction={decorate}
%dashed,->
] (1) -- (3);
\draw[decoration={markings,mark=at position 1 with
    {\arrow[scale=2,>=stealth]{>}}},postaction={decorate}
%dashed,->
] (3) -- (5);
\draw[decoration={markings,mark=at position 1 with
    {\arrow[scale=2,>=stealth]{>}}},postaction={decorate}
%dashed,->
] (5) -- (1); 
\draw[decoration={markings,mark=at position 1 with
    {\arrow[scale=2,>=stealth]{>}}},postaction={decorate}
%dashed,->
] (2) -- (4);
\draw[decoration={markings,mark=at position 1 with
    {\arrow[scale=2,>=stealth]{>}}},postaction={decorate}
%dashed,->
] (4) -- (6);
\draw[decoration={markings,mark=at position 1 with
    {\arrow[scale=2,>=stealth]{>}}},postaction={decorate}
%dashed,arrowhead=10mm,->
] (6) -- (2);     

\end{tikzpicture}
\caption{Defining function $g$. An arrow from $u$  to $v$ means that $v=g(u)$.}
\label{f:6cycle}
\end{figure}

Let $W$ be the set of vertices included in some induced cycle of length at least
six. If every vertex $G$ is contained in $W$,
then by applying iteratively the idea presented above, we can define $g(v)$ for each vertex $v$
such that $v$ and $g(v)$ are in an induced cycle of length at least six.
Then, we will have that $g^2(v)\neq v$ and $\ov{vg(v)}^G\neq \ov{ug(u)}^G$, because
neither $v$ dominates $g(v)$ nor $g(v)$ dominates $v$.
Therefore, for 2-connected bipartite graphs we can prove the conjecture of Zwols mentioned in the introduction since in this case every vertex belongs to $W$.

When a vertex $x$ does not belong to $W$, then for each $y\in N^2(x)$ 
there is an induced cycle of length four that contains $x$ and $y$. 
In such situation we can still define $g$ such that $g^2(w)\neq w$, for 
each $w\in V$. But, there are graphs containing a vertex $x$ such that 
for each $y\in N^2(x)$ the width of line $\overline{xy}^G$ is at least two.
In Figure \ref{f:4cycle} vertex $x$ has this property. 
We shall prove that when this happens all pair of vertices at distance two
defining the line $\overline{xy}^G$ contains $x$. 
Hence, in order to avoid $f(x)=f(z)$ we only need to define $g(z)\neq x$.
In the next lemma we prove that this can always be done since 
when three distinct vertices $x,u$ and $z$ are such that 
$d(x,u)=d(x,z)=d(u,z)=2$ and $\overline{xu}^G=\overline{xz}^G$,
then $u,z\in W$.

\begin{figure}[h]
\centering
\begin{tikzpicture}

\vertex (A)  [minimum size=12pt] {$u$};
\vertex (B) [minimum size=12pt, below right of=A] {};
\vertex (C) [minimum size=12pt, above right of=A] {};
\vertex (D) [minimum size=12pt, above right of=B] {$x$};
\vertex (E) [minimum size=12pt, above right of=D] {};
\vertex (F) [minimum size=12pt, below right of=D] {};
\vertex (G) [minimum size=12pt, above right of=F] {};
\vertex (H) [minimum size=12pt, above right of=C] {$z$};
\vertex (I) [minimum size=12pt, below right of=B] {};

\draw (A) -- (C) -- (D) -- (E) -- (G) -- (F) -- (D) -- (B) -- (A);
\draw (C) -- (H) -- (E);
\draw (B) -- (I) -- (F);

\end{tikzpicture}
\caption{The vertex $x$ always generates lines of width 2.}
\label{f:4cycle}
\end{figure}
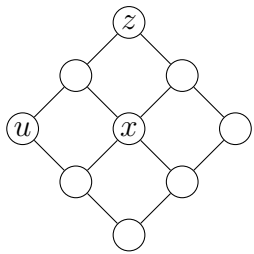

\begin{lemma}\label{l:lengthsix}
Let $u\in V$ such that $\exists x\in X$ with $u\in Y_x$
and $\exists z\in N^2_G(u)\cap  Y_x$. Then, $u,z\in W$.
\end{lemma}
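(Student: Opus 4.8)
The plan is to produce, for each of $u$ and $z$, an induced cycle of length at least six through it; since the hypothesis is symmetric in $u$ and $z$ (the condition $z\in N^2_G(u)\cap Y_x$ can be read as $u\in N^2_G(z)\cap Y_x$), it suffices to find such a cycle through $u$.

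First I would record the easy structure. The equalities $d(x,u)=d(x,z)=d(u,z)=2$ make $x,u,z$ pairwise distinct and, $G$ being bipartite, place them in one part of the bipartition. Since $u\in Y_x$ and $z\in Y_x$, the lines $\overline{xu}^G$ and $\overline{xz}^G$ have width at least two, so Lemma~\ref{l:whenaligned} and the first part of Proposition~\ref{p:uniqueline}, together with 2-connectedness (no cut vertex), force that inside each of the pairs $\{x,u\}$ and $\{x,z\}$ one vertex dominates the other; as $G$ has no twins, the domination holds in exactly one direction. If that direction is ``$u$ dominates $x$'' (or ``$z$ dominates $x$''), I would write $\overline{xu}^G=\overline{st}^G$ with $\{s,t\}\neq\{x,u\}$ and use Lemma~\ref{l:whenaligned} to relabel so that the largest of the four cross-distances (which is at least $4$) is the length of an alignment; Proposition~\ref{p:uniqueline}(iii) then places the middle dominator --- here $u$ --- on an induced cycle of length $2(d(u,s)+2)$, and the alignment identities force $d(u,s)\geq1$, hence length at least six, except in a degenerate case where $\{s,t\}$ meets $\{x,u\}$, which I would settle by iterating Lemma~\ref{l:whenaligned} once more on the two coincident lines. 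The remaining, and main, case is that $x$ dominates $u$ and $x$ dominates $z$.

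In that case $N_G(u)\cup N_G(z)\subseteq N_G(x)$ with both inclusions proper and $N_G(u)\neq N_G(z)$ (no twins), while $N_G(u)\cap N_G(z)\neq\emptyset$. The idea is to build the cycle ``around'' $x$: start from the induced $4$-cycle on $u$, a common neighbour $c\in N_G(u)\cap N_G(z)$, the vertex $x$, and a second neighbour $b\in N_G(u)$, then replace the portion through $x$ by a long detour. The detour is supplied by the width-$2$ hypothesis: Lemma~\ref{l:whenaligned} yields a vertex $t$ with $d(u,t)\geq4$, and a shortest $u$--$t$ path, together with the ``outside the line'' behaviour from the second and third parts of Proposition~\ref{p:uniqueline} (which apply because $x$ dominates $u$), provides a path that leaves $N_G(x)$; splicing it between $b$ and the appropriate neighbour of $u$ yields a cycle through $u$ of length at least six, and the same detour, entered through $c$ and a neighbour of $z$ other than $c$, also passes through $z$ --- this is where the hypothesis $d(u,z)=2$ with $z\in Y_x$ is used, since it forces $z$ onto (or one short chord-free bridge away from) that detour.

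The step I expect to be the crux is verifying that the cycle constructed in the main case is \emph{induced}. The cycle decomposes into a few arcs --- the edges at $u$ and at $z$, a geodesic-based detour, and the common neighbour $c$ --- and one must exclude all chords. A chord between two vertices that both lie in $N_G(x)$, or both in the part of the bipartition containing $x$, is impossible by bipartiteness; a chord that would shortcut the detour is excluded because the detour is essentially a geodesic and by Corollary~\ref{c:candy} and Lemma~\ref{t:candy} applied inside $\overline{xu}^G$; and a chord joining two vertices of the cycle with identical neighbourhoods would contradict the no-twins hypothesis. Once the cycle is shown induced and of length at least six we get $u\in W$, and the symmetric argument gives $z\in W$, which completes the proof.
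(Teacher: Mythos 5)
Your overall plan --- exhibit an induced cycle of length at least six through $u$ and $z$ --- matches the paper's, and your reduction to the case where $x$ dominates both $u$ and $z$ is sound (indeed, by Lemma~\ref{l:xcapy} this is the only case, since $u$ dominating $x$ would put $x\in X\cap Y$; your side case via Proposition~\ref{p:uniqueline}(\ref{s:cycle}) is therefore vacuous, and note that the cycle produced there passes through the \emph{dominating} vertices $x$ and $s$, not the dominated ones). The genuine gap is in your main case: the ``detour'' is never actually constructed, and the two assertions that carry the whole argument --- that the detour passes through $z$, and that the spliced cycle is induced --- are exactly the points left unproved. A shortest $u$--$t$ path to some far-away $t$ has, a priori, nothing to do with $z$, and ``essentially a geodesic'' only excludes chords among the detour's own vertices, not chords between the detour and the edges at $u$, at $z$, or at the common neighbour $c$.

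The missing idea is a rigidity statement, and it is precisely where the hypothesis $z\in N^2_G(u)\cap Y_x$ enters. Write $\ov{xu}^G=\ov{su'}^G$ for a second generating pair with $s\in X$, $u'\in Y_s$, aligned as in Lemma~\ref{l:whenaligned}. Let $v$ be the neighbour of $z$ on a shortest $z$--$u'$ path; since $x$ dominates $z$, $v\in N_G(x)\subseteq \ov{su'}^G$, and Proposition~\ref{p:uniqueline}(\ref{s:ztot}) forces $d(v,s)=d(v,u')$, hence $d(v,u')=1$; combined with Proposition~\ref{p:uniqueline}(\ref{s:zneighbor}), which gives $d(z,u')=d(x,u')=d(x,s)+2$, this yields $d(x,s)=0$, i.e.\ $s=x$, $d(u,u')=4$, and $v\in N_G(x)\cap N_G(z)\cap N_G(u')$. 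Exchanging the roles of $u$ and $z$ produces $z'$ with $\ov{xz}^G=\ov{xz'}^G$, $d(z,z')=4$, and $v'\in N_G(x)\cap N_G(u)\cap N_G(z')$; with common neighbours $w$ of $u,z$ and $w'$ of $u',z'$ one gets the explicit eight-cycle $uwzvu'w'z'v'u$, induced because $d(u,u')=d(z,z')=4$. Without the identification $s=x$ and these distance equalities, neither the membership of $z$ in your cycle nor its inducedness can be verified, so the proposal as written does not constitute a proof.
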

\begin{proof}
 From the definition of $X$ and $Y_x$, there are $s\in X$, $u'\in Y_s$  
such that $\ov{su'}^G = \ov{ux}^G$. 
%the vertex $x$ dominates $u$ and then $z\in N^2_G(x)$; it follows that $d(z,u)=d(z,x)=2$, which implies that $z\notin \ov{ux}^G=\ov{su'}^G$. 
%From Proposition \ref{p:uniqueline}, we get that  $d(z,u')=d(x,u')$. 
Let $v$ be a neighbor of $z$  in a shortest path between $z$ and $u'$. 
On the one hand, $v\in N_G(z)\subseteq N_G(x)$ since
$x$ dominates $z$, and then $v\in \ov{xu}^G=\ov{su'}^G$; 
on the other hand, from  part (\ref{s:ztot}) of Proposition \ref{p:uniqueline}, $d(v,s)=d(v,u')$; then $d(v,s)=d(v,u')=1$. These implies that $x=s$ and $v\in N_G(x)\cap N_G(u')\cap N_G(z)$.

Notice that the roles of $u$ and $z$ are symmetric with respect to $x$.
Hence, there is $z'$ such that $\ov{xz}^G=\ov{xz'}^G$
and there is $v'\in N_G(u)\cap N_G(z')\cap N_G(x)$.
Moreover, there are $w\in N(u)\cap N(x)\cap N(z)$ and
$w'\in N_G(u')\cap N_G(x)\cap N_G(z')$. Therefore, the 
cycle $uwzvu'w'z'v'u$ has length eight and  it is an induced cycle because $d(u,u')=d(z,z')=4$. As $u$ and $z$ belong to this cycle we get the conclusion.
\end{proof}

Now we can prove our main result for 2-connected bipartite graph without pairs of twins.

\begin{theorem}\label{t:notwins}
Let $G$ be a  2-connected bipartite graph without pairs of twins. Then
$$\ell_2(G)\geq |G|.$$
\end{theorem}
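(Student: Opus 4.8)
The goal is to exhibit an injective map $f$ from $V(G)$ into ${\cal L}_2^G$, assigning to each vertex $v$ a non-universal line $f(v)=\overline{v\,g(v)}^G$ with $g(v)\in N_G^2(v)$; injectivity then gives $\ell_2(G)\ge |G|$. (By Corollary~\ref{c:universal} and the hypothesis of no twins, all lines in ${\cal L}_2^G$ are automatically non-universal, so we need not worry about universality.) The obstructions to injectivity were already isolated in the discussion preceding the theorem: $f(v)=f(u)$ for $u\neq v$ can only happen if either (a) $g^2(v)=v$ (the pair $\{v,g(v)\}$ literally equals $\{u,g(u)\}$), or (b) $g^2$ has no fixed point but $\overline{vg(v)}^G=\overline{ug(u)}^G$ with $\{v,g(v)\}\neq\{u,g(u)\}$, in which case Proposition~\ref{p:uniqueline} (via Lemma~\ref{l:whenaligned}) forces one of $v,g(v)$ to dominate the other, and moreover (when also $d(v,g(v))=d(u,g(u))=2$ is realized with the alignment $[yxst]$) that $\{v,g(v)\}\cap X\neq\emptyset$ and $\{v,g(v)\}\cap Y\neq\emptyset$ with $X\cap Y=\emptyset$ by Lemma~\ref{l:xcapy}. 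So it suffices to choose $g$ so that for every $v$: $g^2(v)\neq v$, and the line $\overline{vg(v)}^G$ has width one whenever it is "at risk", i.e.\ whenever $v\in X\cup Y$.

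First I would handle the generic case: every vertex lying on an induced cycle of length $\ge 6$, i.e.\ $W=V$. On each such induced cycle $v_0v_1\cdots v_{2k+1}$ set $g(v_i)=v_{i+2}$ cyclically; since the cycle is induced, no vertex of it is dominated by another vertex of it, and $g^2(v_i)=v_{i+4}\neq v_i$ as the cycle has length $\ge 6$. One must check these local choices can be made globally consistent — pick for each vertex a single induced cycle of length $\ge6$ through it and orient, which is where a small amount of bookkeeping is needed but nothing deep — and that a vertex $v$ with $\overline{vg(v)}^G$ of width $>1$ cannot arise because neither endpoint dominates the other. This already proves Zwols' conjecture for $2$-connected bipartite graphs.

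The real work is the vertices $x\notin W$: for each $y\in N_G^2(x)$ there is only an induced $4$-cycle through $x,y$, and it can happen that $\overline{xy}^G$ has width $\ge 2$ for every such $y$ (Figure~\ref{f:4cycle}). For such $x$ one still wants $g(x)\in N_G^2(x)$ with $g^2(x)\neq x$; the point is to argue that any vertex $z$ with $\overline{xz}^G=\overline{xy}^G=f(x)$ actually also has $x$ as one of its two defining endpoints — more precisely, by Lemma~\ref{l:lengthsix}, whenever $x,u,z$ are distinct with $d(x,u)=d(x,z)=d(u,z)=2$ and $\overline{xu}^G=\overline{xz}^G$, then $u,z\in W$; hence such a "dangerous" partner of $x$ lies in $W$ and we can and must define $g$ on it so that $g(u)\neq x$ and $g(z)\neq x$, using the freedom inside the induced $\ge 6$-cycle through it. So the plan is: partition $V$ into $W$ and $V\setminus W$; define $g$ on $W$ via the cycle rule, but \emph{reserving} the right, for each $x\notin W$ whose line to every $N^2$-neighbor has width $>1$, to steer the $g$-values of its dangerous partners away from $x$; define $g(x)$ for $x\notin W$ by picking any $y\in N_G^2(x)$ and noting $g^2(x)\neq x$ follows because $g(y)\neq x$ was arranged. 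Then verify injectivity of $f$ by ruling out both (a) and (b) case by case.

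The main obstacle I anticipate is the global consistency of the construction when the various induced $\ge6$-cycles and $4$-cycle neighborhoods overlap: a vertex of $W$ may sit on several induced long cycles and simultaneously be a "dangerous partner" of one or more vertices $x\notin W$, so the orientation/choice of cycle for it must simultaneously satisfy $g^2\neq\mathrm{id}$ \emph{and} avoid a prescribed finite set of forbidden targets. Making this precise — showing the constraints are never contradictory, e.g.\ that a vertex cannot be forced to point both at and away from the same neighbor, and that two vertices $x,x'\notin W$ cannot impose incompatible demands on a common partner — is the delicate combinatorial core; Lemmas~\ref{l:xcapy} and~\ref{l:lengthsix} are exactly the tools that make these potential conflicts vacuous, so the proof will consist largely of invoking them at the right moments.
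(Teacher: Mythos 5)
Your outline reproduces the architecture of the paper's own proof (define $g$ along induced cycles of length at least six on $W$, then treat the vertices outside $W$, with Lemma \ref{l:whenaligned}, Lemma \ref{l:xcapy}, Lemma \ref{l:lengthsix} and Proposition \ref{p:uniqueline} controlling widths), but it stops exactly where the proof actually lives. The substance of the argument is the explicit, ordered case analysis defining $g$: on $W$ by the cycle rule with redefinitions when cycles overlap, on $C=(Z\cup X\cup W)^c$ (where, if every $v\in N^2_G(u)$ already points at $u$, the absence of twins is what produces the two vertices $z,z'$ used to redefine $g$), on the two subclasses of $Z\setminus (X\cup W)$ according to whether $u\in Y_x$ for every $x\in X\cap N^2_G(u)$ (again using the no-twins hypothesis, and Lemma \ref{l:lengthsix} to know in which earlier step $g(z)$ was set), and finally on $X\setminus W$, together with the bookkeeping that at every stage $g^2(u)\neq u$ and $w(f(u))=1$ except possibly on $X\setminus W$. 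You explicitly defer all of this as ``the delicate combinatorial core'' to be settled by ``invoking the lemmas at the right moments''; that deferred part is the proof, not a routine verification, so as written there is a genuine gap.

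Two of the concrete claims you do commit to are also problematic. First, your proposed sufficient condition --- choose $g$ so that $w(\overline{v\,g(v)}^G)=1$ whenever $v\in X\cup Y$ --- cannot in general be met: Figure \ref{f:4cycle} exhibits a vertex $x\in X$ for which every line $\overline{xy}^G$ with $y\in N^2_G(x)$ has width at least two, and this is precisely why the paper permits $w(f(u))>1$ on $X\setminus W$ and must close the argument by another route. Second, the statement that every pair at distance two defining $f(x)$ contains $x$ (for $x\notin W$) is not Lemma \ref{l:lengthsix}; that lemma assumes two distinct partners $u,z\in Y_x$ with $d(u,z)=2$ and concludes $u,z\in W$, which serves a different purpose in the construction. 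The claim you need is what the paper establishes in its final injectivity step from part (\ref{s:cycle}) of Proposition \ref{p:uniqueline}: if two distinct vertices $u,u'\in X\setminus W$ had $f(u)=f(u')$, then (after reducing to the alignment via Lemma \ref{l:whenaligned}) $u$ and $u'$ would lie on an induced cycle of length $2(d(u,u')+2)\geq 6$, contradicting $u\notin W$ unless $u=u'$. Your proposal never states this step, yet it is the only mechanism that rules out collisions between the width-greater-than-one lines your construction is forced to create, while steering $g(z)\neq x$ only prevents the easier failure $g^2(x)=x$.
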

\begin{myp}{} Under the assumptions, from Corollary \ref{c:universal} we know that $G$ has no universal pairs at distance two. Moreover, from Lemma \ref{l:whenaligned} we also know that 
if there are $x,y,s,t$ such that $d(x,y)=d(s,t)=2$ and $\ov{xy}^G=\ov{st}^G$, 
then we can assume that $[yxst]$.

We prove that there exists a function $g:V\to V$ 
satisfying $d(u,g(u))=2$ for each $u\in V$, and such that 
the function $f:V \to {\cal L}_2^G$ defined by $f(u)=\ov{ug(u)}^G$ is 
injective. By Corollary \ref{c:universal} the function $f$ ranges over non-universal lines since $G$ has no pairs of twins.

% Recall that $f(u) = f(v)$ if and only if 
% $g(u) = v$ and $g(v) = u$, which would imply that $g(g(u))=u$, 
% or if $w(f(u)) > 1$, $\{u,g(u)\}, \{v, g(v)\}$ are different and generate the same line. 

The definition of $g$ is made in several steps: 
\begin{itemize}
 \item We first define $g$ in  the set $W$. 
Iteratively, we take any induced cycle $C$ of length at least six having vertices 
where $g$ has not been defined. We define $g$ in all the vertices
of the cycle. If for some of them $g$ has been previously defined, we redefine $g$ for these vertices. Let $C$ be a cycle  given by $u_0,u_1,\ldots,u_{2k+1}$, with $k\geq 2$. Then

$$ g(u_{2k})=v_0, g(u_{2k+1})=v_1, \text{ and } g(u_i) = v_{i+2}, i \in \{0,1,\ldots, 2k-1\}. $$

%We define $g(v_{2i})=v_{2(i+1)}$ and 
%$g(v_{2i+1})=v_{2(i+1)+1}$, for $0\leq i\leq k-1$, 
%$g(v_{2k})=v_0$ and $g(v_{2k+1})=v_1$.

We have that $w(\ov{u_ig(u_i)})=1$ since neither $u_i$ dominates $u_{i+1}$, nor $u_i$ is dominated by $u_{i+1}$ as $C$ is an induced cycle of size greater than 4. 
It is clear that $g^2(u_i)\neq u_i$, for each $i=0,\ldots,2k+1$. 
%After this step $g$ is defined on $W$ and for each $u\in W$, 
%$g(u)\in W$, $g^2(u)\neq u$ and $w(\ov{ug(u)}^G)=1$. 

\end{itemize}

To ease the presentation let $Z:=\bigcup_{x\in X}N^2_G(x)$ be the set of all the neighbors at distance 2 of vertices in the set $X$.
Notice that $Y\subseteq Z$ and that from Proposition \ref{p:uniqueline} the set $Y^2:=\bigcup_{y\in Y_x}N^2_G(y)$ is included in $Z$. We also define the set $C:= (Z\cup X\cup W)^c$.
\begin{itemize}
\item We define $g$ in $C$. Let  $u \in C$: 

%In this case, for all $v\in N^2_G(u)$, $w(\ov{uv}^G)=1$.

If there exists $v \in N^2_G(u)$ with $g(v)\neq u$, then we define $g(u)=v$. Since $u \notin Z$ then $v \notin X$ and $w(f(u)) = 1$. 

If $g(v)=u$ for each $v \in N^2_G(u)$, then we claim that $N^2_G(u)$ 
has at least two vertices $z,z'$ such that $N_G(u)\cap N_G(z)\cap N_G(z')$ is not empty. In effect, the vertex $u \notin W$ which implies that $u$ is contained in a cycle of size four. Let $z$ be the vertex at distance 2 of $u$ in this cycle. Since the other two vertices of the cycle do not form a pair of twins, there exists a vertex $z'$ which is neighbor of only one of them and such that $d(z',u)=2$.

By using $z$ and $z'$ we define $g(u)=z$ and redefine $g(z)=z'$ (see Figure \ref{f:reg}); since $N^2(u) \cap X = \emptyset$ we have that $w(\ov{zz'}^G)=1$ and as $u\notin X$, we get $w(\ov{ug(u)}^G)=1$. Moreover, $g^2(u)=z'\neq u$ and $g^2(z')=z\neq z'$. 
Notice that with these definitions $g(C) \subseteq W \cup C$.

%It is important for the sequel to notice that for $u\notin Z\cup X\cup W$, 
%$N^2_G(u)\cap Y=\emptyset$ because otherwise, 
%$u\in N^2_G(x)$, for some $x\in X$, and then $u\in Z$.

%It remains to define $g(u)$, for $u\in (Z\cup X)\setminus W$.
\begin{figure}[h]
\begin{minipage}[c]{0.5\textwidth}
\centering
\begin{tikzpicture}[scale=0.9]

\vertex[circle,  minimum size=12pt](1) {};
\vertex[circle,  minimum size=12pt](2) [above right of=1] {};
\vertex[circle,  minimum size=12pt](3) [below right of=2] {};
\vertex[circle,  minimum size=12pt](4) [above right of=3] {};
\vertex[circle,  minimum size=12pt](5) [below right of=4] {};
\vertex[circle,  minimum size=12pt](6) [above right of=5] {};
\vertex[circle,  minimum size=12pt](7) [above of=4] {$u$};
\draw (1)--(2) -- (3) -- (4) -- (5) -- (6) -- (7) -- (2);
\draw (4) -- (7);

\draw[dashed,->] (1) to [out=90,in=150] (7);
\draw[dashed,->] (3) -- (7);
\draw[dashed,->] (5) -- (7); 

\end{tikzpicture}
\end{minipage}%
\begin{minipage}[c]{0.5\textwidth}
\centering
\begin{tikzpicture}[scale=0.9]

\vertex[circle,  minimum size=12pt](1) {};
\vertex[circle,  minimum size=12pt](2) [above right of=1] {};
\vertex[circle,  minimum size=12pt](3) [below right of=2] {$z'$};
\vertex[circle,  minimum size=12pt](4) [above right of=3] {};
\vertex[circle,  minimum size=12pt](5) [below right of=4] {$z$};
\vertex[circle,  minimum size=12pt](6) [above right of=5] {};
\vertex[circle,  minimum size=12pt](7) [above of=4] {$u$};
\draw (1)--(2) -- (3) -- (4) -- (5) -- (6) -- (7) -- (2);
\draw (4) -- (7);

\draw[dashed,->] (1) to [out=90,in=150] (7);
\draw[dashed,->] (7) -- (3);
\draw[dashed,->] (5) -- (7); 
\draw[dashed,->] (3) -- (5);
\end{tikzpicture}
\end{minipage}
\caption{Redefining $g$}
\label{f:reg}
\end{figure}
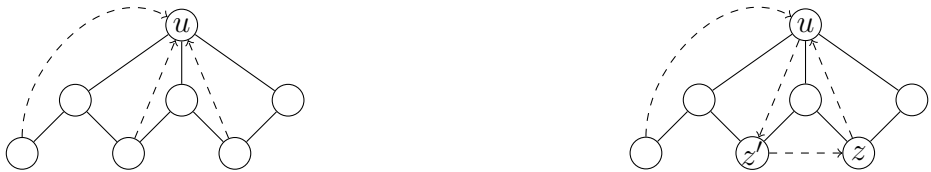

\item Now we define $g$ for  $u\in Z\setminus (X\cup W)$ such that there exists $x\in X \cap N^2_G(u)$ with $u \notin Y_x$. In this case, we set $g(u)=x$.  We have $w(f(u))=1$ because $u \notin Y_x$. If $x \in W$ then $g(x) \neq u$ because $u \notin W$. Otherwise $g(x)$ has not been defined yet, so we will show later that $g(x) \neq u$.

\item Now we define $g$ for $u\in Z\setminus (X\cup W)$ such that for all $x\in X\cap N^2_G(u)$ we have that $u \in Y_x$. In this case,  we claim there exists a vertex $z \in N^2(u)$ such that $z \notin X$ or $u \notin Y_z$. In effect, if for every $z\in N^2_G(u)$ we have that $u\in Y_z$, then,
by Proposition \ref{p:uniqueline}, every $z\in N^2_G(u)$ dominates $u$, which implies that there is a pair of twins whose common neighborhood is $N^2_G(u)\cup \{u\}$ inside the neighbors of $u$.
Thus, there is $z\in N^2_G(u)$ such that $z \notin X$ or $u\notin Y_z$; so we define $g(u) = z$. 
Clearly, $w(f(u)) = 1$, by definition. If $z \in X$, then $g(z)$ have not been defined yet. Otherwise, $z \in Z$ and $z \notin Y_x$ for all $x \in X \cup N^2(u)$, by Lemma \ref{l:lengthsix} since $u \notin W$. In this case $g(z)$ was defined in the previous step and satisfies $g(z) \in  X$.
\item The last step is to define $g$ for $u\in X\setminus W$. We pick $y\in Y_u$ arbitrarily and define $g(u)=y$. From the definition of $X$ we conclude that $w(f(u)) > 1$. Notice that $g(y)$ was already defined in previous steps; moreover, in the previous steps we always have defined $g(v)$ such that $w(f(v))=1$; so we have that $g^2(v) \neq v$ for all $v \in Z$ such that $g(v) \in X$. 
\end{itemize}

Finally we prove the injectivity of $f$. Suppose there exist $u, u' \in X$ such that $f(u) = f(u')$. Since $g^2(u) \neq u$ and $g^2(v) \neq v$, we have that $\{u, g(u)\}$ and $\{v, g(v)\}$ generate the same line. 
Since $G$ is 2-connected and has no pairs of twins, we can assume that $[g(u)uu'g(u)]$ holds. From part (\ref{s:cycle}) of Proposition \ref{p:uniqueline}  we have that $u$ and $u'$ are contained in a cycle of size $2(d(u,u') +2)$, but $u \notin W$, which implies $d(u,u')=0$ from where we obtain the injectivity of $f$. 

\end{myp}

Now we analyze the remaining cases:
\begin{theorem}\label{t:mainbip}
Let $G$ be a connected bipartite graph with at least 3 vertices. If $G\notin \{C_4, K_{2,3}\}$ then 
$$\ell_2(G)+\textsc{br}(G)\geq n.$$
\end{theorem}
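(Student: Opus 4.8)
\textbf{Proof proposal for Theorem \ref{t:mainbip}.}

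The plan is to reduce the general connected bipartite case to the already-proved Theorem \ref{t:notwins} by peeling off the two structural obstructions that Theorem \ref{t:notwins} excludes: bridges and pairs of (false) twins. First I would induct on $n = |G|$, the base cases $n = 3, 4, 5$ being checked by hand (here is where the excluded graphs $C_4$ and $K_{2,3}$ show up, together with small trees and $P_4$, $C_6$, etc.; note a connected bipartite graph on $3$ vertices is $P_3$, which has $\textsc{br} = 2$ and the bound holds trivially). For the inductive step, I would split into three regimes according to the structure of $G$.

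\emph{Case 1: $G$ has a bridge.} Let $e = uv$ be a bridge, so $G - e = G_u \cup G_v$ with $u \in G_u$, $v \in G_v$. Each line of $G_u$ (resp. $G_v$) extends to a distinct line of $G$ by (\ref{eq:cutset})-type reasoning (a line defined inside one side either stays the same or gains all of the other side), and lines coming from the two sides overlap in at most one line; moreover the pair $\{u, v\}$ at distance $1$ is irrelevant for $\ell_2$, but any vertex at distance $2$ straddling the bridge contributes. Counting carefully, $\ell_2(G) \ge \ell_2(G_u) + \ell_2(G_v) - 1$ and $\textsc{br}(G) = \textsc{br}(G_u) + \textsc{br}(G_v) + 1$, while $|G| = |G_u| + |G_v|$; applying the inductive hypothesis to both sides (checking each is connected bipartite and not in $\{C_4, K_{2,3}\}$, or handling it directly if it is) gives the bound. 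One must be slightly careful when a side is a single edge or tiny — but then that side is a path and the bound is easy there.

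\emph{Case 2: $G$ is bridgeless but has a cut vertex $v$.} Write $G = G_1 \cup G_2$ with $V(G_1) \cap V(G_2) = \{v\}$. Here I would invoke Lemma \ref{lem:cutvertex}, which gives $\ell_2(G) \ge \ell_2(G_1) + \ell_2(G_2) - 1 + |N_{G_1}(v)|\,|N_{G_2}(v)| \ge \ell_2(G_1) + \ell_2(G_2) - 1 + 4$ (since bridgelessness forces each $|N_{G_i}(v)| \ge 2$). By induction $\ell_2(G_i) + \textsc{br}(G_i) \ge |G_i|$, and $\textsc{br}(G_i) = 0$ since $G$ (hence each $G_i$) is bridgeless, so $\ell_2(G_i) \ge |G_i|$; summing and using $|G| = |G_1| + |G_2| - 1$ yields $\ell_2(G) \ge |G_1| + |G_2| - 1 + 3 = |G| + 2 > n$. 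The only subtlety is when $G_i \in \{C_4, K_{2,3}\}$: then induction does not directly apply, but one checks $\ell_2(C_4) = 1 = |C_4| - 3$ and $\ell_2(K_{2,3}) = 4 = |K_{2,3}| - 1$, and the extra $+4$ slack from the $|N_{G_1}(v)||N_{G_2}(v)|$ term (and the fact that $v$ having degree $\ge 2$ on both sides gives even more in the $C_4$/$K_{2,3}$ sub-configurations) absorbs the deficit — this bookkeeping is the fiddliest part of the case.

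\emph{Case 3: $G$ is $2$-connected.} If $G$ has no pair of twins, Theorem \ref{t:notwins} gives $\ell_2(G) \ge |G|$ and we are done (with $\textsc{br}(G) = 0$). If $G$ does have a pair of twins $(v, v')$ — with $d(v, v') = 2$ since $G$ is bipartite and connected — I would contract the twin class: form $G'$ by deleting $v'$. Then $G'$ is still connected bipartite, and one argues that $\ell_2(G') \le \ell_2(G) - 1$ (the neighborhood $N_G(v) = N_G(v')$ together with any third vertex in it produces, via the local counting observation for bipartite graphs, a line of $G$ that is "lost" or whose width drops when $v'$ is removed — more precisely, each vertex $w \in N_G(v)$ yields the line $\overline{v w'}$ for a second neighbor $w'$, and the presence of both $v$ and $v'$ forces at least one extra distinct line through the common neighborhood that disappears in $G'$). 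If $G'$ also has twins, iterate. Eventually either we reach a graph in $\{C_4, K_{2,3}\}$ (whose count is known and whose "distance" from the original $G$ in removed-twin-count exactly compensates the small deficit of $1$ or $3$), or we reach a twin-free $2$-connected bipartite graph and apply Theorem \ref{t:notwins}, gaining at least one line back for each removed twin.

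\emph{Main obstacle.} The genuinely delicate part is not the recursion skeleton but the \emph{twin-removal bookkeeping} in Case 3 (and the analogous edge cases with $C_4$, $K_{2,3}$ in Cases 1–2): one must show that deleting a twin $v'$ loses at least one line of $\mathcal{L}_2^G$ — equivalently that collapsing a twin class of size $k$ into size $1$ costs at least $k - 1$ lines — and simultaneously that removing twins never creates a bridge or destroys $2$-connectivity in a way that breaks the induction. I would handle the "loses a line" claim by the local argument: fix a neighbor $a$ of the twin class; among its other neighbors there is one $b$ at distance $2$ from each twin, and the lines $\overline{v b}^G, \overline{v' b}^G$ are distinct in $G$ but collapse to a single line in $G'$, because $N_G(v) \cap \overline{vb}^G = \{v, b\}$ forces $v' \notin \overline{vb}^G$ and vice versa. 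Tracking that no \emph{other} pair of distance-$2$ vertices accidentally merges under the deletion — so that the net change in $\ell_2$ is exactly a loss — is the step that requires the most care, and is where Proposition \ref{p:uniqueline} and Lemma \ref{l:whenaligned} (characterizing when two pairs give the same line) would be pressed into service.
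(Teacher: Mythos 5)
Your Cases 1 and 2 are essentially the paper's route (split at a bridge, apply Lemma \ref{lem:cutvertex} at a cut vertex), and the loose ends there are real but fixable: the paper avoids your ``a side is $C_4$ or $K_{2,3}$'' bookkeeping by first deleting pendant vertices and then, for a bridge $ab$, inducting on $G[V(G_a)\cup\{b\}]$ and $G[V(G_b)\cup\{a\}]$, which have pendant edges and so are automatically not $C_4$ or $K_{2,3}$; and in the cut-vertex case your claim that the slack ``absorbs the deficit'' genuinely fails when $G_1=G_2=C_4$ (the cut vertex has degree exactly $2$ on each side, so there is no extra slack; the deficit is $3+3>4$), which the paper settles by computing $\ell_2(G)=7=|G|$ directly.

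The genuine gap is your Case 3. First, the obligation you set yourself --- ``removing twins never creates a bridge'' --- is false: if the twin pair $\{v_1,v_2\}$ has a common neighbour $b$ of degree $2$, then $v_2b$ is a bridge of $G-v_1$ even though $G$ is $2$-connected. The paper confronts exactly this: it proves every bridge of $G'=G-v_1$ is such a pendant edge at $v_2$, chooses the twin pair so that $G'$ has as few bridges as possible (minimality then forces at most one), and compensates by exhibiting \emph{two} lines of $G$ not generated by pairs inside $G'$, namely $\ov{v_1t}^G$ with $t\notin M\cup N(M)$ (which requires handling complete bipartite graphs separately, via Lemma \ref{l:complete} --- a case your scheme omits) and $\ov{v_1v_2}^G=M\cup N(M)$. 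Second, your iterated accounting does not add up: recovering one line per removed twin gives $\ell_2(G)\ge \ell_2(H)+k=|H|-d+k=|G|-d$ when the iteration terminates at $H\in\{C_4,K_{2,3}\}$ with deficit $d\in\{3,1\}$, so the deficit is never compensated no matter how many twins were removed; concretely, for $G=K_{3,3}$ or $K_{2,4}$ one removal reaching $K_{2,3}$ plus one recovered line yields only $5<6$. So ``at least one lost line per twin'' is too weak, and strengthening it (or proving the terminal graph is twin-free, $2$-connected and covered by Theorem \ref{t:notwins}) is precisely the missing argument; the paper's single-deletion, bridge-minimizing choice together with the two explicit extra lines is what makes the count close.
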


\begin{myp}{} 
We proceed by induction on $n:=|G|$. If $n = 3$, then $G$ is a path with 3 vertices and satisfies $\ell_2(G) = 1$ and $\textsc{br}(G) = 2$.

Suppose that $G$ has a pendant edge $ab$ with $b$ a vertex of degree 1. Let $G':= G -b$; if $G'$ is not isomorphic with $C_4$ or $K_{2,3}$, then by induction hypothesis we have that 
$$ \ell_2(G)+\textsc{br}(G) \geq \ell_2(G') + \textsc{br}(G') + 1 \geq n. $$ 
When $G'$ is isomorphic with $C_4$ or $K_{2,3}$ a cases analysis shows that 
the graph $G$ satisfies $\ell(G)+1\geq |G|$. 
Hence, we can assume in the sequel that $G$ has no pendant edges.
If $G$ has a bridge $ab$, let $G_a$ and $G_b$ the connected components of $G - ab$ that contain $a$ and $b$, respectively. As $G$ has no pendant edge, both $G_a$ and $G_b$ 
have at least two vertices one of them of degree at least two; hence they have at least 3 vertices.
Let $G'_a$ be the subgraph of $G$ induced by $V(G_a) \cup \{b\}$ and $G'_b$ the subgraph of $G$ induced by $V(G_b) \cup \{a\}$. As they have a pendant edge they are neither $C_4$ nor $K_{2,3}$. 
For two distinct vertices $u$ and $v$ in $G'_a$  we have that 
$\ov{uv}^G \in \{ \ov{uv}^{G'_a}, \ov{uv}^{G'_a} \cup G_b\}$. 
The analogous property holds for vertices in $G'_b$. Then, it follows that 
$$ \ell_2(G) \geq \ell_2(G'_a) + \ell_2(G'_b) -1. $$
On the other hand $G'_a$ and $G'_b$ share a bridge, hence
$$ \textsc{br}(G) = \textsc{br}(G'_a) + \textsc{br}(G'_b) -1.$$
By plugging these two inequalities and using the induction hypothesis we obtain:
\begin{eqnarray*}
\ell_2(G)+\textsc{br}(G) & \geq &\ell_2(G'_a)+\textsc{br}(G'_a)+\ell_2(G'_b) + \textsc{br}(G'_b) - 2\\
&\geq & |G'_a| + |G'_b| - 2 \\
& = & n + 2 -2 = n. \\
%& \geq & |G_a|+|G_b|\\
%&\geq &|G|. 
\end{eqnarray*}
%By induction hypothesis this quantity is greater than $|G|$ unless $G_a = G_b = C_4$ or $G_a  = C_4$ and $G_b = K_{2,3}$.
Hence, in what follows we can assume that the graph $G$ is bridgeless.
We now consider that $G$ is bridgeless and has a cut vertex $v$. 
Let $G_1$ and $G_2$ be subgraphs of $G$ such that $E(G)=E(G_1)\cup E(G_2)$ and
$V(G_1)\cap V(G_2)=\{v\}$. Then from Lemma \ref{lem:cutvertex}
 we have that 
 \begin{equation*}
\ell_2(G)\geq \ell_2(G_1)+\ell_2(G_2)+3.\\
%& \geq & |G_1|+|G_2|+3\\
%&\geq &|G| 
\end{equation*}
By induction hypothesis this quantity is greater than $|G|$ unless $G_1 = G_2 = C_4$ because $\ell_2(C_4) = 1$ and $\ell_2(K_{2,3}) = 4$. When $G_1 = G_2 = C_4$ we can compute directly the value $\ell_2(G) = 7=|G|$. Hence, in the rest of the proof we can assume that $G$ is 2-connected. 

If $G$ has no pair of twins, then we obtain the conclusion from Theorem  \ref{t:notwins}.

If $G=K_{q,p}$, then from Lemma \ref{l:complete} we get 
conclusion as $\binom{p}{2}+\binom{q}{2}\geq p+q$ when $p+q\geq 6$.
For $p+q\leq 5$, $p+q=5$ implies that $G=K_{2,3}$ and $p+q=4$ implies that $G=C_4$.

To end the proof, we assume that $G$ is 2-connected, it has pairs of twins and it is not a complete bipartite graph. 

We choose $M=\{v_1, v_2\}$ as a pair of twins with $G':=G-v_1$ having as few bridges as possible.

The graph $G'\neq C_4$, as otherwise $G=K_{2,3}$, and $G'\neq K_{2,3}$, as otherwise $G=K_{3,3}$ or $G=K_{2,4}$.

From the induction hypothesis, we have that $\ell_2(G')+\textsc{br}(G')\geq |G'|=|G|-1$.

Set $\mc L'=\{\ov{xy}^G: x,y \in V(G'), d(x,y)=2\}$. 
Since $G'$ is an isometric subgraph of $G$ (i.e. for all $x,y \in V(G')$, the distance between $x$ and $y$ in $G'$ is the same as it is in $G$), we have,  for all $a,b \in   V(G')$, $\ov{ab}^G=\ov{ab}^{G'}$ or $\ov{ab}^{G'}=\ov{ab}^{G'} \cup \{v_1\}$. 
Hence 
\begin{equation}\label{eq4:1}
|\mc L'| = \ell_2(G') \ge |G|-1-\textsc{br}(G').
\end{equation}
Moreover, each line in  $\mc L'$ that contains $v_1$ must contains $v_2$.
%at least one other vertex of $M$. Indeed, let $\ov{ab}^G \in \mc L'$ with $a,b \in V(G')$ and  such that $v_1 \in \ov{ab}^G$. 
%If $a,b \notin M$, then $\ov{ab}^G$  contains $M$ and we are done, otherwise either $a$ or $b$ are  in $M$ and thus $\ov{ab}^G$ contains at least two vertices of $M$.

Since $G$ is not a complete bipartite graph, there is 
$t\in G-(M\cup N(M))$. It is clear that $v_1$ is the unique vertex in $M$ which belongs to the line $\ol{v_1t}^G$. 
Hence, $\ol{v_1t}^G\notin \mc L'$ and thus, if $\textsc{br}(G')=0$, we are done by (\ref{eq4:1}). So we may assume that $G'$ has at least one bridge $ab$. 
We will prove that the choice of $M$ guarantees that there is only one bridge in $G'$.

\begin{claim}
For any bridge $ab$ of $G$, $v_2=a$ and there is a connected component that only contains $b$.
\end{claim}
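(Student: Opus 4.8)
The plan is to exploit the $2$-connectivity of $G$ together with the equality $N_G(v_1)=N_G(v_2)$ to force the bridge into a very rigid position. Fix a bridge $ab$ of $G'=G-v_1$, and let $A$ and $B$ be the two components of $G'-ab$, labelled so that $a\in A$ and $b\in B$. Observe that $A\cup B=V(G')=V(G)\setminus\{v_1\}$ and that $ab$ is the \emph{only} edge of $G'$ joining $A$ and $B$ (equivalently, the only edge of $G$ joining them once $v_1$ is deleted).

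The first step is to locate the neighbours of $v_1$. Since $G$ is $2$-connected, $G-ab$ is connected, whereas $(G-ab)-v_1=G'-ab$ splits into the components $A$ and $B$; hence $v_1$ has a neighbour in $A$ and a neighbour in $B$. Because $N_G(v_1)=N_G(v_2)$, the vertex $v_2$ likewise has a neighbour $b'\in B$. Now $v_2\in A\cup B$, and up to swapping the names of the two endpoints of the bridge we may assume $v_2\in A$. Then the edge $v_2b'$ joins $A$ and $B$ inside $G'$, so it must be the edge $ab$; as $v_2\in A$ this forces $v_2=a$ (and $b'=b$), which is the first assertion.

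The second step is a count of the edges of $G$ leaving $B$. Inside $G'$ the only edge joining $A$ and $B$ is $ab$, and this stays true in $G$ for edges not incident to $v_1$; moreover $N_G(v_1)=N_G(v_2)=N_G(a)$ and the only neighbour of $a$ in $B$ is $b$, so $v_1$ has no neighbour in $B$ other than $b$. Hence the only edges of $G$ joining $B$ to $V(G)\setminus B$ are $ab$ and $v_1b$, both incident to $b$. If $B$ contained a vertex besides $b$, then $b$ would be a cut vertex of $G$ separating the non-empty set $B\setminus\{b\}$ from $V(G)\setminus B$, contradicting $2$-connectivity. Therefore $B=\{b\}$: the component of $G'-ab$ containing $b$ consists of $b$ alone, which is the second assertion.

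I expect the only delicate point to be the step that puts $v_2$ on the bridge: one must keep straight that $A$ and $B$ partition $V(G)\setminus\{v_1\}$, that $ab$ is the unique edge joining $A$ and $B$ in $G'$, and that $v_1$ and $v_2$ share a neighbourhood; granting this, the rest is an elementary connectivity count. No use of bipartiteness or of the minimality of $\textsc{br}(G')$ is needed for this claim itself; that minimality will be invoked only afterwards, to upgrade ``there is such a bridge'' to ``$G'$ has a unique bridge'' and to set up the twin-replacement argument.
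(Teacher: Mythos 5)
Your proof is correct and takes essentially the same route as the paper's: use the bridgelessness of $G$ to force $v_1$ (hence its twin $v_2$) to have a neighbour in the component of $G'-ab$ not containing $v_2$, conclude that this neighbour is joined to $v_2$ by the bridge itself so $v_2=a$, and then note that every edge of $G$ leaving the component of $b$ is incident to $b$, so that component must be the singleton $\{b\}$ lest $b$ be a cut vertex. You merely spell out the details the paper compresses, including the implicit reading that the bridge in question is one of $G'$ rather than of the bridgeless $G$.
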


\begin{proof}
Set $G_a$ be the connected component that contains $a$ and $G_b$ the one that contains $b$ in the graph $G -ab$. Without loss of generality we can assume that $v_2 \in G_a$. Since $G$ is bridgeless $v_1$ and $v_2$ must have neighbors in $G_a$ and $G_b$ which implies that $v_2 = a$. Moreover, since $G$ has no cut vertex it follows that $G_b = b$.
\end{proof}

%In fact, it can be proved that
%for each $ab$ bridge of $G'$, 
%the connected component of $G'-ab$ which does not contain $v_2$
%is a singleton $\{c\}$ with $\{c,v_2\}=\{a,b\}$ and 
%$N_G(c)=\{v_1,v_2\}$. 
%Then, $M=\{v_1,v_2\}$. Hence, each bridge of $G'$ has one ends which is $v_2$ and the other end is adjacent to $v_2$. 
%From this we conclude that if $M'$ is the set of  the 
%ends of bridges in $G'$ which are not $v_2$, then $N(M')=\{v_1,v_2\}$. 

Suppose that there exist at least two bridges in $G'$. By the claim, we know that all of them are incident with $v_2$ and $v_1$ in $G$. In particular, they form a pair of twins and the graph obtained 
if we remove one of them has no bridges, contradicting the choice of $M$. Hence, $G'$ has only one bridge.

%Let $b$ be a vertex such that $bv_2$ is the bridge of $G'$. 
%Then, $N_G(b)=M=\{v_1,v_2\}$ and every $x\in N(M)$, $x\neq b$ has at least one neighbor not in $M$.
%Hence, the line $\ov{bx}^G$ is not contained in $M\cup N(M)$, for each $x\in N(M)$, $x\neq b$.
%Similarly, the line $\ov{v_2x}^G$ contains at least one vertex not in $M\cup N(M)$, when $x\in N(M)$.

Consider now the line $\ov{v_1v_2}^G$;  since $G$ is bipartite, $N(M)$ is an  independent set and thus $\ov{v_1v_2}^G=M \cup N(M)$. 
We claim that $\ov{v_1v_2} \notin \mc L' \cup \{\ov{v_1t}^G\}$ which gives the result by (\ref{eq4:1}). 

In effect, we first note that $\ov{v_1v_2}^G \neq \ov{v_1t}^G$, since $t\notin M\cup N(M)$. So we may assume, for the sake of contradiction, that $\ov{v_1v_2}^G \in \mc L'$. 
Let $x,y \in N(M) \cup M - \{v_1\}$ such that $\ov{xy}^G=M \cup N(M)$.  
Notice that for every pair of vertices $u,u' \in N(M) \setminus \{b\}$, the line $\ov{uu'}^G$ does not contain $b$; then we can assume that $x = b$ since $v_2$ does not have any vertex at distance 2 in $\ov{xy}^G$; but this is a contradiction because if $y \in N(M)$ then $|N(M) \setminus b| = 1$ which is not possible since $G$ has no cut vertex. 
\end{myp}

\section{Metric space with few distances}

Let $M=(V,d)$ be a metric space. Let $a\in V$ and let $V^{-a}:=V\setminus \{a\}$.
The set $V^{-a}$ endowed with the restriction of $d$ to $V^{-a}$ 
is a metric space that, in this work, we shall refer to as $M^{-a}=(V^{-a},d^{-a})$.

Notice that for a metric space $M$ defined by a graph, the metric space $M^{-a}$ may not be the same as the metric space defined for the subgraph obtained after removing vertex $a$. 

Recall that $\ell^*(M)$ denotes the number of distinct non-universal lines in $M$.
In metric spaces we have the following relation between its lines and the lines of its subspaces.

\begin{lemma}\label{l:linesind}
 For every metric space $M = (V,d)$, $\ell^*(M)\geq \ell^*(M^{-a})$,
 for any $a\in V$.
\end{lemma}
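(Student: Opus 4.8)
The plan is to build an injection from the non‑universal lines of $M^{-a}$ into the non‑universal lines of $M$. The natural candidate is to send a line $\ell$ of $M^{-a}$, say $\ell = \ov{xy}^{M^{-a}}$ for some pair $x,y\in V^{-a}$, to the line $\ov{xy}^M$ of the full space $M$. The first thing to check is that this assignment is well defined: if two pairs $\{x,y\}$ and $\{x',y'\}$ in $V^{-a}$ generate the same line in $M^{-a}$, then the lines they generate in $M$ should also coincide. This follows from the general principle, used repeatedly in the excerpt (cf.\ equation~(\ref{eq:cutset}) and the isometric‑subgraph argument in the proof of Theorem~\ref{t:mainbip}), that since $M^{-a}$ is an isometric subspace of $M$, for any $u,v\in V^{-a}$ one has $\ov{uv}^M \cap V^{-a} = \ov{uv}^{M^{-a}}$, so $\ov{uv}^M$ is either $\ov{uv}^{M^{-a}}$ or $\ov{uv}^{M^{-a}}\cup\{a\}$. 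Hence $\ov{uv}^M$ is determined by $\ov{uv}^{M^{-a}}$ together with the single bit "does $a$ belong to it", but $a\in \ov{uv}^M$ depends only on $d(a,u),d(a,v)$ and $d(u,v)$ — and here I need to argue this bit is the same for both pairs, which is where a small amount of care is required.

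Let me isolate the key step. Suppose $\ov{xy}^{M^{-a}} = \ov{x'y'}^{M^{-a}} =: \ell$. I want $a\in\ov{xy}^M \iff a\in\ov{x'y'}^M$. The cleanest route is to avoid this issue entirely: define the map on lines directly by $\ell \mapsto \ell$ if $a\notin \ov{xy}^M$ for the chosen representative, but this is circular. Instead I would argue as follows. If $a\in\ov{xy}^M$, then $\ov{xy}^M = \ell\cup\{a\}$; if $a\notin\ov{xy}^M$ then $\ov{xy}^M=\ell$. In either case $\ov{xy}^M\cap V^{-a}=\ell$, so the restriction map $L\mapsto L\cap V^{-a}$ sends $\ov{xy}^M$ back to $\ell$. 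Therefore, whatever $\ov{xy}^M$ is, it is a line of $M$ whose trace on $V^{-a}$ is $\ell$; and at most two lines of $M$ (namely $\ell$ and $\ell\cup\{a\}$, if both happen to be lines of $M$) have trace $\ell$. So I should define the injection more robustly: send $\ell$ to the unique non‑universal line of $M$ whose trace on $V^{-a}$ equals $\ell$, **if** it is unique; and handle the ambiguous case separately. The ambiguous case is when both $\ell$ and $\ell\cup\{a\}$ occur as lines of $M$ and both are non‑universal — but this cannot cause a collision, because then the two distinct lines $\ell$, $\ell\cup\{a\}$ of $M$ are available and we may route $\ell$ (as a line of $M^{-a}$) to either one; distinct lines of $M^{-a}$ have distinct traces, so their images stay distinct regardless of the choice.

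Putting it together: for each non‑universal line $\ell\in \mathcal L^*(M^{-a})$, choose a line $\Phi(\ell)$ of $M$ with $\Phi(\ell)\cap V^{-a}=\ell$ and $\Phi(\ell)\neq V$ (such a line exists: take $\ov{xy}^M$ for a representative pair; since $\ov{xy}^M\cap V^{-a}=\ell\subsetneq V^{-a}$ would fail only if $\ell=V^{-a}$, i.e. $\ell$ universal in $M^{-a}$, contrary to hypothesis, we get $\Phi(\ell)\neq V$ automatically when $a\notin\Phi(\ell)$, and when $a\in\Phi(\ell)=\ell\cup\{a\}$ we have $\Phi(\ell)=V$ iff $\ell=V^{-a}$, again excluded). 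Then $\Phi$ is injective because $\Phi(\ell_1)\cap V^{-a}=\ell_1\neq \ell_2=\Phi(\ell_2)\cap V^{-a}$, so $\Phi(\ell_1)\neq\Phi(\ell_2)$. Hence $\ell^*(M)\geq |\mathcal L^*(M^{-a})| = \ell^*(M^{-a})$. The main obstacle is purely bookkeeping: making sure that the chosen representative line of $M$ is genuinely non‑universal, which as noted reduces to the observation that $\ell$ non‑universal in $M^{-a}$ means $\ell\subsetneq V^{-a}$, so even after possibly adding the point $a$ the line misses some point of $V^{-a}$ (when $a\notin\ell$) or — when we add $a$ — still might equal $V$; that sub‑case needs the explicit check above that $\ell\cup\{a\}=V$ forces $\ell=V^{-a}$, contradiction. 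No step is deep; the only thing to be careful about is not conflating "$\ell$ universal in $M$" with "$\ell$ universal in $M^{-a}$".
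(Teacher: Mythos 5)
Your proof is correct and takes essentially the same route as the paper: you extend each non-universal line of $M^{-a}$ to the line of $M$ generated by the same pair, and since distances within $V^{-a}$ are unchanged, the trace identity $\ov{xy}^M\cap V^{-a}=\ov{xy}^{M^{-a}}$ forces distinct lines of $M^{-a}$ to have distinct extensions. The paper argues injectivity via a witnessing point $z\in\ell\setminus\ell'$ rather than via traces, and leaves the (easy) preservation of non-universality implicit, but the underlying argument is the same.
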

\begin{proof} 
Let $V':=V^{-a}$, $d':=d^{-a}$ and $M':=M^{-a}$.
Let $x, x',y, y'\in V'$ such that $l:= \ov{xy}^{M'} \neq l':=\ov{x'y'}^{M'}$. Since these lines are different, we can assume there exists a point $z \in V'$ such that $z \in l \setminus l'$. Since the distance between points in $V'$ does not change in $V$, it follows that $d(x,y)=|d(x,z)\pm d(z,y)|$ and $d(x',y')\neq |d(x',z)\pm d(z,y')|$ which implies that $\ov{xy}^{M} \neq \ov{x'y'}^{M}$. 

Hence, two different lines in $M'$ extend to two different lines in $M$. Therefore, $\ell^*(M)\geq \ell^*(M')$.

%Let $V':=V^{-a}$, $d':=d^{-a}$ and $M':=M^{-a}$.
% Let $x,y\in V'$. Then $v\in \ov{xy}^{M'}$ if and only if
% $d'(x,y)=|d'(y,v)\pm d'(x,v)|$. Since this equality is still
%valid for $v\in V'$ when we replace $d'$ by $d$, we get that 
%$\ov{xy}^{M}\cap V'=\ov{xy}^{M'}$. This implies that two different lines in $M'$ 
%extend to two different lines in $M$. Therefore, $\ell^*(M)\geq \ell^*(M')$.
\end{proof}

%%%CAMBIOS

Let us recall that $(v,v')$ is a \emph{pair of twins} of a metric space $M=(V,d)$,
if $v$ and $v'$ are two distinct points in $V$ such that $d(v,v')\neq 1$ and for all $u\notin \{v,v'\}$, $d(v,u)=d(u,v')$.

% and they are characterized as those points
% $v$ and $v'$ such that $F^v=F^{v'}$, where $F^v=\{u\in V: d(u,v)=1\}$.
The symmetric role of vertices in a pair of twins with respect to the distance function is partially described in the following lemma.

\begin{lemma}\label{l:twinsandlines}
Let $(v,v')$ be a pair of twins on $M=(V,d)$ and let $x,y$ two distinct points in $V^{-v'}$. 
If $v\notin \{x,y\}$, then $v\in \ov{xy}^M$ if and only if $v' \in \ov{xy}^M$. 
\end{lemma}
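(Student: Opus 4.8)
The statement to prove is Lemma~\ref{l:twinsandlines}: if $(v,v')$ is a pair of twins and $x,y$ are distinct points with $v,v'\notin\{x,y\}$ (since $x,y\in V^{-v'}$ and $v\notin\{x,y\}$), then $v\in\ov{xy}^M$ if and only if $v'\in\ov{xy}^M$.

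\medskip

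The plan is to argue directly from the definition of the line, exploiting the defining property of twins: $d(v,u)=d(v',u)$ for every point $u\notin\{v,v'\}$. First I would observe that since $x,y\notin\{v,v'\}$, we have $d(v,x)=d(v',x)$ and $d(v,y)=d(v',y)$. Now recall that $z\in\ov{xy}^M$ means $d(x,y)=|d(x,z)+d(z,y)|$ or $d(x,y)=|d(x,z)-d(z,y)|$; since distances are nonnegative, this just says $d(x,y)\in\{d(x,z)+d(z,y),\,|d(x,z)-d(z,y)|\}$. Applying this with $z=v$: the membership $v\in\ov{xy}^M$ is a statement about the three numbers $d(x,y)$, $d(x,v)$, $d(v,y)$. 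Applying it with $z=v'$: the membership $v'\in\ov{xy}^M$ is a statement about $d(x,y)$, $d(x,v')$, $d(v',y)$. But $d(x,v)=d(x,v')$ and $d(v,y)=d(v',y)$ by the twin property, and $d(x,y)$ is the same in both. Hence the two membership conditions are literally the same arithmetic condition on the same triple of numbers, so one holds precisely when the other does.

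\medskip

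The only subtlety to check is that the twin property genuinely applies to both $x$ and $y$ as the "third point" $u$. This is exactly why the hypotheses stipulate $x,y\in V^{-v'}$ and $v\notin\{x,y\}$: together these give $x,y\notin\{v,v'\}$, so $d(v,x)=d(v',x)$ and $d(v,y)=d(v',y)$ are both legitimate instances of the twin equation. There is no case analysis needed beyond this and no use of the condition $d(v,v')\neq 1$ — that part of the twin definition is irrelevant here because $v$ and $v'$ never appear together as the pair $\{x,y\}$. I do not anticipate a real obstacle: this is a short direct substitution argument, and the "hard part," such as it is, is merely making explicit that the line-membership predicate for $z$ depends only on the multiset of distances $\{d(x,z),d(z,y)\}$ together with $d(x,y)$, after which the twin equalities close the argument immediately.
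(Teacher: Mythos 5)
Your proposal is correct and is essentially the paper's own argument: the membership of $z$ in $\ov{xy}^M$ depends only on $d(x,y)$, $d(x,z)$, $d(z,y)$, and since $x,y\notin\{v,v'\}$ the twin property makes these distances identical for $z=v$ and $z=v'$. The observation that the hypotheses force $x,y\notin\{v,v'\}$, and that $d(v,v')\neq 1$ is not needed, matches the paper's (more terse) proof exactly.
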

\begin{proof}
By definition $v\in \ov{xy}^M$ if and only if $d(x,y)=|d(x,v)\pm d(v,y)|$; but as $(v,v')$ 
is a pair of twins, we can replace $v$ in previous equality by $v'$ and we get the result.
\end{proof}

To ease the presentation we denote by $\cal M^*$ 
the set of all metric spaces 
satisfying
$$\ell^*(M)+\textsc{up}(M)\geq |M|.$$

Now we prove that a metric space with at least three points which is minimal not in $\cal M^*$ cannot contain
a pair of twins $(v,v')$ such that, for every $u\in V\setminus \{v,v'\}$, $d(u,v)=1$.

\begin{proposition}\label{p:mincounternotwins}
Let $M=(V,d)$ be a minimal metric space not in $\cal M^*$ with at least three points. 
If $(v,v')$ is a pair of twins of $M$, then there is $u\in V\setminus \{v,v'\}$ such that
$d(v,u)\neq 1$. 
\end{proposition}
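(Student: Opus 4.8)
The plan is to argue by contradiction using the minimality of $M$. Suppose $(v,v')$ is a pair of twins with $d(v,u)=1$ for every $u\in V\sm\{v,v'\}$; then $v$ is adjacent (at distance one) to all other points, and by the twin condition so is $v'$, while $d(v,v')\neq 1$. First I would pass to the subspace $M':=M^{-v'}$, which has at least two points and, by the minimality of $M$, lies in $\mathcal M^*$, so $\ell^*(M')+\textsc{up}(M')\geq |M'|=|M|-1$. The goal is then to recover the missing unit: produce either one extra non-universal line of $M$ not coming from $M'$, or one extra universal pair, so that $\ell^*(M)+\textsc{up}(M)\geq |M|$, contradicting $M\notin\mathcal M^*$.

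The key observations are about how lines behave around the twins. Since $v$ is at distance $1$ from everything and $d(v,v')\neq 1$, note $d(v,v')=2$ (triangle inequality through any third point, using integrality is not even needed — any $u$ gives $d(v,v')\le d(v,u)+d(u,v')=2$, and it is $\ge 2$ since it is not $1$ and not $0$). Then for any $u\ne v,v'$ we have $d(v,v')=d(v,u)+d(u,v')=2$, so $u\in\ov{vv'}^M$ for every such $u$; hence $\ov{vv'}^M=V$ and $(v,v')$ is a universal pair. So $\textsc{up}(M)\ge 1$ already, and in particular $\textsc{up}(M)\ge\textsc{up}(M')$ plus this pair — but I must be careful that a universal pair of $M'$ need not be universal in $M$, and conversely. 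The cleaner route: by Lemma~\ref{l:linesind}, $\ell^*(M)\ge\ell^*(M^{-v'})=\ell^*(M')$. So it suffices to show $\textsc{up}(M)\ge\textsc{up}(M')+1$, i.e. that every universal pair of $M'$ extends to a universal pair of $M$ and that $(v,v')$ is an extra universal pair of $M$ not counted among those.

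For the extension claim: if $\{x,y\}$ is a universal pair of $M'$ (so $x,y\neq v'$), I want $\ov{xy}^M=V$, which amounts to checking $v'\in\ov{xy}^M$. If $v\in\{x,y\}$, then say $x=v$; since $d(v,v')=2=d(v,y')$… hmm, one must check $v'\in\ov{vy}^M$ directly: $d(v,y)=1$ for $y\ne v,v'$, and $d(v',y)=1$ as well, with $d(v,v')=2$, so $d(v',v)=d(v',y)+d(y,v)$ fails ($2\neq 1+1$? it holds!) — indeed $2=1+1$, so $v\in\ov{v'y}^M$, wait I need $v'\in\ov{vy}^M$: $d(v,y)=1$ and $|d(v,v')-d(v',y)|=|2-1|=1=d(v,y)$, so yes $v'\in\ov{vy}^M$. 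If $v\notin\{x,y\}$, then $v\in\ov{xy}^{M'}=V'$ trivially and by Lemma~\ref{l:twinsandlines} $v'\in\ov{xy}^M$ too; combined with $\ov{xy}^{M'}=V'$ this gives $\ov{xy}^M=V$. The main obstacle I anticipate is precisely this bookkeeping — ensuring no universal pair of $M$ is double-counted and that $(v,v')$, which involves $v'$, is genuinely a new pair not among the extended ones (it is, since any extended pair avoids $v'$). Assembling: $\ell^*(M)+\textsc{up}(M)\ge\ell^*(M')+(\textsc{up}(M')+1)\ge (|M|-1)+1=|M|$, so $M\in\mathcal M^*$, contradiction; hence such a point $u$ with $d(v,u)\neq 1$ must exist.
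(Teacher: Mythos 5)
Your proof is correct and follows essentially the same route as the paper: show $d(v,v')=2$ so that $(v,v')$ is a universal pair, delete $v'$, invoke Lemma~\ref{l:linesind} to keep $\ell^*$ from dropping, and use Lemma~\ref{l:twinsandlines} (plus the direct check when $v\in\{x,y\}$) to see that universal pairs of $M^{-v'}$ stay universal in $M$ while $(v,v')$ is a new one, contradicting minimality. The only difference is cosmetic bookkeeping: you prove $\textsc{up}(M)\geq\textsc{up}(M^{-v'})+1$ directly, whereas the paper first deduces $\textsc{up}(M^{-v'})\geq\textsc{up}(M)$ from the counterexample assumption and then contradicts it.
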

\begin{proof} 
For the sake of contradiction, let $M$ be a minimal metric space not in $\cal M^*$ and 
let $(v,v')$ be a pair of twins of $V$ such that for each $u\in V\setminus \{v,v'\}$, $d(v,u)=1$.
Since $d(v,v')\neq 0,1$, we have that $d(v,v')\geq 2$. As $M$ has at least three points,
there is $u\notin \{v,v'\}$. Then, for such $u$ we have $d(v,u)+d(u,v')\leq 2$, which implies that $d(v,v')=2$.
Hence, 
$$\overline{vv'}^M=\{v,v'\}\cup \{u\in V: d(u,v)=d(u,v')=1\}=V.$$ 
Thus, $(v,v')$ is a universal pair of $M$.

%Since $d(v,v') > 1$ and $v$ and $v'$ have common neighbors we have that $d(v,v')=2$.

Let $M'=M^{-v'}$. By the minimality of $M$, the space $M'$ belongs to $\cal M^*$. Hence, 
we have that $\ell^*(M')+\textsc{up}(M')\geq |V|-1$.
From Lemma \ref{l:linesind}, we have that $\ell^*(M)\geq \ell^*(M')$.
As $M\notin \cal M^*$ we have that $|V|-1\geq \ell^*(M)+\textsc{up}(M)$.
Hence, $\textsc{up}(M') \geq \textsc{up}(M)$.

To get the contradiction we prove that $\textsc{up}(M) > \textsc{up}(M')$.
Let $(x,y)$ be a universal pair in $M'$. We prove that it is also universal in $M$.
By Lemma \ref{l:twinsandlines} this is immediate if $v\notin \{x,y\}$. 
So, we can assume that $x=v$. As we have that $d(v,y)=1=d(v',y)$ and $d(v,v')=2$, we
get that $v'\in \overline{xy}^M$, thus 
$(x,y)$ is a universal pair in $M$ as well.
To prove the strict inequality notice that 
$(v,v')$ is a universal pair in $M$ but not in $M'$.

\end{proof}

\subsection{2-metric spaces}

In this section, we prove that 2-metric spaces with at least three points belong 
to $\cal M^*$.
We first study the case when the metric space has no pairs of twins. 
In order to do that, we fix a point $v$ of the metric space and we count the different lines defined by $v$ and the other vertices of the metric space.

%In order to count the number of non-universal lines and the number
%of universal pairs when there is no twins we first analyze the set of lines obtained from a reference point $v$. 

The following lemma summarizes the restrictions on a 2-metric space $M$ 
appearing when there are repeated lines. The first statement appears in \cite{ChCh11}.
%Most of its content appears in \cite{CC}.

\begin{lemma}\label{l:1}
Let $M=(V,d)$ be a 2-metric space.
Let $v,x,y,z$ points in $V$.
\begin{enumerate}[(i)]
\item\label{l:1:i} If $v,x,y$ are distinct, then $\ov{vx}^M=\ov{vy}^M$ implies $d(v,y)\neq d(v,x)$ or $(x,y)$ is a pair of twins with $d(v,x)=d(v,y)=1$.
\item\label{l:1:ii} If $d(v,y)=2$, then the only point in $\ov{vy}^M$ at distance two from $v$ is $y$.
\item\label{l:1:iii} If $d(v,y)=2$, $d(v,x)=1$, $\ov{vy}^M=\ov{vx}^M$,  
$d(v,z)=2$, $d(x,z)=1$ and $z\in \ov{xy}^M$; then $z=y$.
\end{enumerate}

\end{lemma}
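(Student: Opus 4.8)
The plan is to push everything through the single structural feature of a $2$-metric space: all distances lie in $\{0,1,2\}$, so the lines issuing from a fixed point are completely explicit. I would begin by recording two descriptions obtained by a one-line case analysis on $d(v,z)\in\{0,1,2\}$. If $d(v,x)=1$, then $z\in\ov{vx}^M$ if and only if $d(v,z)\neq d(z,x)$: the additive condition $1=d(v,z)+d(z,x)$ only yields $z\in\{v,x\}$, and a difference $|d(v,z)-d(z,x)|=2$ is impossible since it would need $\{d(v,z),d(z,x)\}=\{0,2\}$, forcing $z\in\{v,x\}$ and hence that distance to be $1$. If instead $d(v,x)=2$, then $z\in\ov{vx}^M$ with $d(v,z)=2$ forces $2=|2\pm d(z,x)|$, so $d(z,x)=0$ and $z=x$; this is exactly statement~(\ref{l:1:ii}).

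For statement~(\ref{l:1:i}), I would first note $x\in\ov{vx}^M=\ov{vy}^M$ and $y\in\ov{vy}^M=\ov{vx}^M$, and that $v\notin\{x,y\}$ gives $d(v,x),d(v,y)\in\{1,2\}$. If $d(v,x)=d(v,y)=2$ then $x\in\ov{vy}^M$ with $d(v,x)=2$ contradicts~(\ref{l:1:ii}) unless $x=y$; so in the relevant case $d(v,y)=d(v,x)$ we must have $d(v,x)=d(v,y)=1$. Using the first description, equality of the two lines is equivalent to the statement that for every $z$, $d(v,z)=d(z,x)$ holds if and only if $d(v,z)=d(z,y)$ holds. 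I would then fix $z\notin\{x,y\}$, assume $d(z,x)\neq d(z,y)$, and reach a contradiction: if $d(v,z)$ equals exactly one of $d(z,x),d(z,y)$ the displayed equivalence fails; and if it equals neither, then $d(v,z),d(z,x),d(z,y)$ are three distinct elements of $\{0,1,2\}$, hence all of $\{0,1,2\}$, but then one of them is $0$, which forces $z\in\{v,x,y\}$ (and $z=v$ gives $d(z,x)=d(z,y)=1$), a contradiction. Hence $d(z,x)=d(z,y)$ for all $z\notin\{x,y\}$; finally $d(x,y)\neq 1$ since $y\in\ov{vx}^M$ gives $1=|1\pm d(x,y)|$, i.e.\ $d(x,y)\in\{0,2\}$. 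So $(x,y)$ is a pair of twins with $d(v,x)=d(v,y)=1$.

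For statement~(\ref{l:1:iii}), the hypotheses give $d(v,z)=2\neq 1=d(x,z)$, so the $d(v,x)=1$ description puts $z\in\ov{vx}^M=\ov{vy}^M$; since $d(v,z)=2=d(v,y)$, statement~(\ref{l:1:ii}) applied to $\ov{vy}^M$ yields $z=y$. (The hypothesis $z\in\ov{xy}^M$ is not needed for this route.)

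I do not expect a real obstacle: the argument is just careful bookkeeping of distances in $\{0,1,2\}$. The one place to be attentive is the final case in~(\ref{l:1:i}), where neither $d(z,x)$ nor $d(z,y)$ equals $d(v,z)$: one must remember to rule out the degenerate coincidences $z\in\{v,x,y\}$. Throughout, it also pays to track which points are automatically distinct from the hypotheses --- for instance $z\neq v,x$ in~(\ref{l:1:iii}) because $d(v,z)=2$.
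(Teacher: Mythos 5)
Your proof is correct. For parts (ii) and (iii) it is essentially the paper's own argument: (ii) is the same direct check, and in (iii) you, like the paper, first place $z$ on $\ov{vx}^M=\ov{vy}^M$ (you via the criterion $d(v,z)\neq d(z,x)$, the paper via $d(v,z)=d(v,x)+d(x,z)$) and then invoke (ii); both arguments in fact never use the hypothesis $z\in\ov{xy}^M$, as you note. The genuine difference is in part (i): the paper gives no proof and simply cites Chiniforooshan and Chv\'atal, whereas you supply a complete first-principles argument. Your key device --- for $d(v,x)=1$, membership $z\in\ov{vx}^M$ is equivalent to $d(v,z)\neq d(z,x)$, and the case $d(v,x)=d(v,y)=2$ is excluded by (ii) --- reduces equality of the two lines to the pointwise condition $d(z,x)=d(z,y)$ for all $z\notin\{x,y\}$, and you handle the degenerate coincidences correctly ($z=v$ forces $d(z,x)=d(z,y)=1$; $d(z,x)=0$ or $d(z,y)=0$ contradicts $z\notin\{x,y\}$), with the final step $d(x,y)\neq 1$ following from $y\in\ov{vx}^M$. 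What your route buys is self-containedness: the lemma becomes entirely elementary bookkeeping of distances in $\{0,1,2\}$, at the cost of a short case analysis that the paper outsources to the cited reference.
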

\begin{proof} The first statement was proved in \cite{ChCh11}.
The second statement is direct because  if a point $u$ satisfies  $d(v,u)=d(v,y)=2$ and $u\neq y$, then $u\notin \ov{vy}^M$ by definition. For the third statement, it is immediate that $z\in \ov{vx}^M$ since $d(v,z)=2= d(v,x) + d(x,z)$. From the second statement we get that $z=y$ since $\ov{vx}^M=\ov{vy}^M$ and $d(v,y)=2$.
\end{proof}

Let $M= (V,d)$ be a 2-metric space and $v\in V$. We define the sets 
$$F=\{x\in V: d(x,v)=1\} \text{ and } S=\{y\in V: d(v,y)=2\}.$$ 

Notice that $\{\{v\},F,S\}$ is a partition of the set $V$. 
In the rest of this section, we always will consider this partition,
that is to say, $v$ is fixed for the discussion.

We consider the following sets of lines:
\begin{itemize}
\item $vF :=\{\ov{vx}^M: x\in F, \ov{vx}^M\neq V\}$.
\item $vS :=\{\ov{vy}^M: y\in S, \ov{vy}^M\neq V\}$.
%\item $F'=\{\ov{xu}^M: x,u\in F, d(x,u)=1\}$.
\item $S^* :=\{\ov{yw}^M: y,w\in S, y\neq w\}$.
\item $FS: =\{\ov{xz}^M: x\in F, z\in S, \text{ s.t. }\exists y\in S, 
\ov{vx}^M=\ov{vy}^M, z\notin \ov{xy}^M\}$.
\end{itemize}

%Notice that if $FS\neq \empty set$, then $|S|\geq 2$. 
We give example of lines in these sets in the metric space defined by the graph of Figure \ref{f:graphmet}.
\bigskip

%\begin{figure}[h]\label{fig1}
\begin{minipage}[c]{0.5\textwidth}

\centering
\begin{tikzpicture}[scale=0.6]
\vertex[circle,  minimum size=12pt](1) at  (0,2) {$a$};
\vertex[circle,  minimum size=12pt](2) at  (1,0) {$d$};
\vertex[circle,  minimum size=12pt](3) at  (2,2) {$b$};
\vertex[circle,  minimum size=12pt](4) at  (2,4) {$v$};
\vertex[circle,  minimum size=12pt](5) at  (3,0) {$e$};
\vertex[circle,  minimum size=12pt](6) at  (4,2) {$c$};
\draw (1)--(2) -- (3) -- (5)--(6) -- (4) -- (1);
\draw (4) -- (3) -- (6);
\draw (1) to [out=30,in=150] (6);
\draw (5,2) node{$F$};
\draw (5,0) node{$S$}; 
\end{tikzpicture}

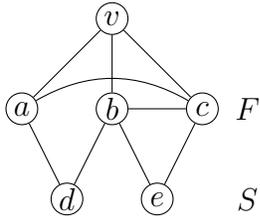
\captionof{figure}{The sets $F$ and $S$ defined by a graph.}\label{f:graphmet}
\end{minipage}%
\begin{minipage}[c]{0.5\textwidth}
%In the figure we have that:
\begin{itemize}
\item $\ov{va}^M = \{v,a,b,d\} \in vF$.
\item $\ov{vd}^M = \{v,a,b,d\} \in vS$.
\item $\ov{de}^M = \{ d,e,b \} \in S^*$.
\item $\ov{ad}^M = \{a,d,v,b,c \}$.
\item $\ov{ae}^M = \{a, e, c \}  \in FS$.
\end{itemize}
\end{minipage}
%\end{figure}

\bigskip
With previous notation we have the following properties
that we shall use further on.

\begin{proposition}\label{p:severalsmallresults}
Let $M=(V,d)$ be a 2-metric space. 
\begin{enumerate}[(i)]
\item \label{l:cardtrivial}  Let $\textsc{up}_F$ (resp. $\textsc{up}_S$) be the number of universal
 pairs $(v,u)$ with $u\in F$ (resp. $u\in S$) and let $\textsc{up}^{-v}$ be
 the number of universal pairs $(u,w)$ with $v\notin \{u,w\}$. 
 Then, $\textsc{up}(M)=\textsc{up}^{-v}+\textsc{up}_S+\textsc{up}_F$, 
 $|vS|=|S|-\textsc{up}_S$, and when $M$ has no pairs of 
  twins, $|vF|=|F|-\textsc{up}_F$. 
 
 \item \label{l:structFS}
$\forall \ell \in FS$, $\exists z \in S$ such that  $\ell \subseteq F \cup \{z\}$. In particular, if $FS \neq \emptyset$, then $|S| \geq 2$. 

%Let $x\in F, y\in S, z\in V$ such that $\ov{vx}^M=\ov{vy}^M$ and  $z\notin \ov{xy}^M$.
%Then, $d(x,y)=1$, $d(v,z)=d(x,z)=2$ and $\ov{xz}^M\subseteq F\cup \{z\}$.
%In particular, $|S|\geq 2$.
\item \label{l:disjoint1}
$(FS \cup S^*)\cap (vS\cup vF)=\emptyset$. 
 \item \label{l:upperbound}
$|FS|+\textsc{up}^{-v}\geq |vF\cap vS|$.
\item\label{l:partialcont}
 $|vF\cup vS\cup FS|+\textsc{up}(M)\geq n-1$.
\item\label{l:disjoint2}
 $S^* \cap FS=\emptyset$.
\end{enumerate}
\end{proposition}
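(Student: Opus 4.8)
The six statements are largely independent bookkeeping facts about the four families of lines $vF$, $vS$, $S^*$, $FS$, and I would prove them one at a time, reusing Lemma \ref{l:1} throughout. For (\ref{l:cardtrivial}), the partition $\textsc{up}(M)=\textsc{up}^{-v}+\textsc{up}_S+\textsc{up}_F$ is immediate since every universal pair either avoids $v$ or contains $v$ together with a point of $F$ or of $S$. For $|vS|=|S|-\textsc{up}_S$ I would use Lemma \ref{l:1}(\ref{l:1:ii}): if $y,y'\in S$ are distinct then $y'\notin\ov{vy}^M$, so the lines $\ov{vy}^M$ for $y\in S$ are pairwise distinct; hence the non-universal ones among them are in bijection with $S\setminus\{y:\ov{vy}^M=V\}$, giving $|vS|=|S|-\textsc{up}_S$. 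For $|vF|=|F|-\textsc{up}_F$ when there are no pairs of twins, I would use Lemma \ref{l:1}(\ref{l:1:i}): $\ov{vx}^M=\ov{vx'}^M$ with $x\ne x'$ in $F$ would force $(x,x')$ to be a pair of twins, contradiction; so again the lines are distinct and the count follows.

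For (\ref{l:structFS}), take $\ell=\ov{xz}^M\in FS$ with $x\in F$, $z\in S$, and a witness $y\in S$ with $\ov{vx}^M=\ov{vy}^M$ and $z\notin\ov{xy}^M$. I would show every point $u\in\ell$ lies in $F\cup\{z\}$: if $d(v,u)=2$ then, since $z\notin\ov{xy}^M$ means $d(x,z)\ne 1$ or (by the 2-metric range) the configuration is constrained, I would invoke Lemma \ref{l:1}(\ref{l:1:iii}) — which says precisely that under $d(v,y)=2$, $d(v,x)=1$, $\ov{vy}^M=\ov{vx}^M$, $d(v,u)=2$, $d(x,u)=1$, $u\in\ov{xy}^M$ we get $u=y$ — to pin down that the only point of $S$ forced into $\ell$ is $z$ itself, the point $y$ being excluded by hypothesis. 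The "in particular" is then clear: $z\in S$ and the witness $y\in S$ are distinct, so $|S|\ge 2$. For (\ref{l:disjoint1}), a line in $vF\cup vS$ contains $v$; I would check that no line in $FS\cup S^*$ can contain $v$: a line $\ov{yw}^M$ with $y,w\in S$ cannot contain $v$ because that would need $d(y,w)=|d(y,v)\pm d(w,v)|=|2\pm 2|\in\{0,4\}$, impossible in a 2-metric space with $y\ne w$; and a line $\ov{xz}^M\in FS$ is contained in $F\cup\{z\}$ by (\ref{l:structFS}), which excludes $v$.

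For (\ref{l:upperbound}), I would build a surjection from (universal pairs avoiding $v$) $\sqcup$ (lines in $FS$) onto $vF\cap vS$. If $\ell\in vF\cap vS$ then $\ell=\ov{vx}^M=\ov{vy}^M$ with $x\in F$, $y\in S$, $\ell\ne V$; since $x\in\ell$ and $d(v,x)=1$, and $y\in S$, I would argue that either $x$ has some neighbour-type partner $z\in S$ with $z\notin\ov{xy}^M$, producing a line of $FS$, or no such $z$ exists, in which case $\ov{xy}^M\supseteq S$ together with $x$'s behaviour forces $\ov{xy}^M=V$ or $(x,y)$ universal — more precisely I would show the absence of such $z$ makes $(x,z')$ universal for a suitable $z'\ne v$, landing in $\textsc{up}^{-v}$. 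The delicate point, and the one I expect to be the main obstacle, is (\ref{l:upperbound}): defining this map so that distinct lines of $vF\cap vS$ get distinct preimages requires carefully reading off the witness $y$ (or $z$) from the line and using Lemma \ref{l:1}(\ref{l:1:ii}) to recover $y$ uniquely from $\ell$, so that the fibers do not collide. For (\ref{l:partialcont}) I would combine: $vF\cup vS$ covers all non-universal lines through $v$, whose number is $|F|+|S|-(\textsc{up}_F+\textsc{up}_S)-|vF\cap vS|=n-1-\textsc{up}_F-\textsc{up}_S-|vF\cap vS|$ by (\ref{l:cardtrivial}); adding $|FS|\ge|vF\cap vS|-\textsc{up}^{-v}$ from (\ref{l:upperbound}) and then $\textsc{up}(M)=\textsc{up}^{-v}+\textsc{up}_F+\textsc{up}_S$ gives $|vF\cup vS\cup FS|+\textsc{up}(M)\ge n-1$, using (\ref{l:disjoint1}) to know the union $vF\cup vS\cup FS$ really has $|vF\cup vS|+|FS|$ elements. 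Finally (\ref{l:disjoint2}) is the observation that a line in $S^*$ lies in $S\cup$(something), contains two distinct points of $S$, whereas a line in $FS$ lies in $F\cup\{z\}$ by (\ref{l:structFS}) and hence contains at most one point of $S$; so the two families are disjoint.
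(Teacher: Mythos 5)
Your proposal matches the paper's proof part by part: the same use of Lemma \ref{l:1}(\ref{l:1:i})--(\ref{l:1:iii}) for (\ref{l:cardtrivial}), the same witness argument showing an $FS$-line meets $S$ only in $z$ for (\ref{l:structFS}), the same $v$-membership test for (\ref{l:disjoint1}) and S-point count for (\ref{l:disjoint2}), the same dichotomy for (\ref{l:upperbound}) (either a point of $S$ off $\ov{xy}^M$ yields an $FS$-line, or $(x,y)$ is a universal pair avoiding $v$) with injectivity via recovering the witnesses through Lemma \ref{l:1}(\ref{l:1:ii}), and the same inclusion--exclusion computation for (\ref{l:partialcont}). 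The two steps you leave schematic --- that $d(x,z)=2$ in (\ref{l:structFS}) and that any point outside $\ov{xy}^M$ must lie in $S$ in (\ref{l:upperbound}) --- are exactly where the paper inserts short two-case distance checks against $\ov{vx}^M=\ov{vy}^M$, so nothing essential differs.
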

\begin{myp}{}
\begin{enumerate}[(i)]
\item Direct from Lemma \ref{l:1}. 

\item Let $\ell \in FS$, then there exist $x\in F, y\in S$, and $z\in S$ such that $\ell = \ov{xz}^M$, $\ov{vx}^M=\ov{vy}^M$ and  $z\notin \ov{xy}^M$.

First, notice that since $y\in \ov{vx}^M$, we get that $d(x,y)=d(v,y) - d(v,x) = 1$. 

We claim that $d(x,z) = 2$. In effect, if $d(x,z) = 1$, then $d(y,z) = 1$ because $z \notin \ov{yx}^M$; but this would imply that $z \in \ov{vx}^M$ and $z \notin \ov{vy}^M$, which is a contradiction since these lines are equal.

%By definition $z\notin \ov{xy}^M$ if and only if $d:=d(z,x)=d(z,y)\in \{1,2\}$.
%If $d = 1$ and $z \in F$, then $z\in \ov{vy}^M\setminus \ov{vx}^M$, a contradiction since $\ov{vx}^M=\ov{vy}^M$. If $z\in S$, then $z\in \ov{vx}^M \setminus \ov{vy}^M$, again a contradiction. Thus $d = 2$. 

%If $z \in F$, then $z \in \ov{vx}^M \setminus \ov{vy}^M$ which is a contradiction. Thus $z\in S$ and $d(x,z) = 2$.

We have by definition that 
$$\ov{xz}^M = \{x,z\} \cup \{u: d(u,x)=d(u,z)=1\},$$ 

which implies that $v \notin \ov{xz}$. 

Now we prove that $\ov{xz}^M \cap S = \{z\}$. By contradiction, suppose there exists a point $u\in \ov{xz}^M\cap S$, with  $u\neq z$; it follows that  $d(v,u)=2$ and $d(x,u)=d(u,z)=1$ which implies that $u\in \ov{vx}^M=\ov{vy}^M$; from part (\ref{l:1:iii}) of Lemma \ref{l:1} we get that $u=y$, a contradiction since $d(z,y)= 2$. Hence,  $\ov{xz}^M\subseteq F\cup \{z\}$. 

Finally, if $FS \neq \emptyset$, there exists a point $z \in S$, which, by definition, is different from $y$, and then $|S| \geq 2$.

%As $z\notin \ov{xy}^M$ we know that $z\neq y$.
%Then, $z\notin \ov{vy}^M=\ov{vx}^M$. This proves that $d(x,z)=2$.
%We now prove that $\ov{xz}^M\subseteq F\cup \{z\}$.
%In fact, as $z\notin \ov{xy}^M$ and $d(x,z)=2$, we get $d(z,y)=2$
%and $y\notin \ov{xz}^M$. 
%Let $u\in \ov{xz}^M$, $u\neq x,z$. Then, $u\neq y$, 
%and $d(u,x)=d(u,z)=1$. From Lemma \ref{l:1}(3), $u\neq y$ and $d(u,x)=1$ 
%implies that $u\in F$.

\item On one hand,  every line $\ell\in vF\cup vS$ contains the point  $v$ by definition; on the other hand, $v\notin \ell'$ when $\ell' \in S^*$ and from part (\ref{l:structFS}) we get that lines in $FS$ do not contain $v$.  
%In fact,
% for $\ell \in FF$, there are $x,u\in F$ with $d(x,u)=1$ and $\ell=\ov{xu}^M$.
% But $d(x,u)=1$ implies $v\notin \ov{xu}^M$.
% For $\ell \in SS$, there are $y,w\in S$, $y\neq w$ with 
% $d(v,y)=d(v,w)=2$ and $\ell=\ov{wy}^M$. Again, this definition 
% implies $v\notin \ov{wy}^M$. 

\item 
% If $\ov{vx}^M=\ov{vy}^M=\ov{vx'}^M=\ov{vy'}^M$ for $x,x'\in F$ and $y,y'\in S$, then from Lemma \ref{l:1} part (3) we get that if $x=x'$, then $y=y'$. 
Let $y_1, \ldots,y_r\in S$ such that for each $i=1,\ldots,r$ 
there exists $x_i\in F$ with $\ov{vx_i}^M=\ov{vy_i}^M$ and $vF \cap vS=\{\ov{vy_i}^M:i=1,\ldots,r\}$. From part (\ref{l:1:iii}) of Lemma \ref{l:1} we deduce that if $x_i=x_j$, then $i=j$. Let $I\subseteq \{1,\ldots,r\}$ be the set of all indices such that $(x_i,y_i)$ is a universal pair of $M$. 
For each $i\notin I$ tehre exist a vertex $z_i \notin \ov{x_iy_i}^M$. We claim that $z_i \in S$. In effect, suppose that $z_i  \in F$; on one hand if $d(z_i,y_i) = d(z_i,x_i) = 1$, then $z_i \in \ov{vy_i}^M \setminus \ov{vx_i}$, a contradiction; on the other hand if $d(z_i,y_i) = d(z_i,x_i) = 2$, then $z_i \in \ov{vx_i}^M \setminus \ov{vy_i}$, a contradiction again. Hence $z_i \in S$ and $\ov{x_iz_i}^M\in FS$.

From part (\ref{l:structFS}) we get that $d(x_i,z_i) = 2$ and $\ov{x_iz_i}^M\subseteq F\cup \{z_i\}$. We shall prove that all the lines defined in  this way are different. In effect, suppose there exist $i, j$ such that $\ov{x_jz_j}^M=\ov{x_iz_i}^M$; on one hand, it follows from (\ref{l:structFS}) that $z_i=z_j$; on the other hand, 
from part (\ref{l:1:ii}) of Lemma \ref{l:1} it follows that $x_i = x_j$, since $d(x_i,z_i) = d(x_j,z_j) = 2$.

Therefore, $|FS|\geq r-|I| \geq  |vF \cap vS| - \textsc{up}^{-v}$.

\item From parts (\ref{l:disjoint1}) and (\ref{l:cardtrivial}), we get that 
\begin{eqnarray*}
|vF\cup vS\cup FS|+\textsc{up}(M) & \geq & |vF|+|vS|-|vF\cap vS|+|FS|+\textsc{up}(M) \\
&=& |F|-\textsc{up}_F+|S|-\textsc{up}_S-|vF\cap vS|+|FS|+\textsc{up}(M)\\
&\geq & |F|+|S|-|vF\cap vS|+|FS|+\textsc{up}^{-v},\\
\end{eqnarray*}
since $\textsc{up}(M)= \textsc{up}^{-v}+\textsc{up}_F+\textsc{up}_S$.

From part (\ref{l:upperbound}) we get that $|FS|+\textsc{up}^{-v}-|vF\cap vS|\geq 0$, which implies the conclusion since $|F|+|S|=n-1$.

\item Let $\ell \in FS \cap S^*$. On one hand $|\ell \cap S| \geq 2$ by definition; on the other hand from (ii) we get that  $\ell \cap S=\{z\}$ which implies that $FS \cap S^* = \emptyset$.

%For the sake of contradiction, let us assume
%that there are $w,u\in F$ such that $d(w,u)=1$ and $\ov{wu}^M=\ov{xz}^M$.
%If $x\notin \{u,w\}$,
% then as $w,u\in \ov{xz}^M$ and $d(x,z)=2$ we get that 
% $d(u,z)=d(u,x)=d(w,z)=d(w,x)=1$ which is a contradiction
% since $d(z,u)=d(z,w)=1$ implies $z\notin \ov{wu}^M$. 
%So let us assume that $x=w$. As $u\notin \ov{vx}^M$,
%we get that $u\notin \ov{vy}^M$ and then $d(u,y)=2$.
%Hence, $y\in \ov{xu}^M$ but, as $y\notin \ov{xz}^M$,  we get a contradiction.
\end{enumerate}

\end{myp}

\begin{proposition}\label{p:notwins} A 2-metric space with no pairs of twins 
belongs to $\cal M^*$.
\end{proposition}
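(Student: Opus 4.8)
The goal is to show that a 2-metric space $M=(V,d)$ with no pairs of twins satisfies $\ell^*(M)+\textsc{up}(M)\geq |V|$. The plan is to fix an arbitrary point $v\in V$, use the partition $\{\{v\},F,S\}$ from the preceding discussion, and build a large family of \emph{distinct} non-universal lines plus universal pairs. The family assembled in Proposition \ref{p:severalsmallresults} is $vF\cup vS\cup FS$, and part (\ref{l:partialcont}) already gives $|vF\cup vS\cup FS|+\textsc{up}(M)\geq n-1$. So I only need to gain \emph{one} more line. The natural candidate is a line in $S^*$, i.e. a line $\ov{yw}^M$ with $y,w\in S$: by part (\ref{l:disjoint1}) such a line is disjoint from $vF\cup vS$, and by part (\ref{l:disjoint2}) it is disjoint from $FS$. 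Hence if I can exhibit a single non-universal line in $S^*$ that is genuinely new, I am done: $\ell^*(M)\geq |vF\cup vS\cup FS\cup\{\ell\}|$ gives the extra $+1$.

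The main case analysis will therefore be on the structure of $S$. First, if $S=\emptyset$, then every point other than $v$ is at distance $1$ from $v$; since $M$ has no pairs of twins, I would argue directly that the lines $\ov{vx}^M$ for $x\in F$, together with universal pairs, already number at least $n$ — indeed part (\ref{l:cardtrivial}) gives $|vF|=|F|-\textsc{up}_F$, so $|vF|+\textsc{up}(M)\geq |F|+1=n$ provided there is at least one universal pair or $F$ contributes an extra line, which I would need to check using the no-twins hypothesis and the fact that $|V|\geq 3$ (here I expect to lean on Lemma \ref{l:1}(\ref{l:1:i}) and a small argument that not all lines $\ov{vx}^M$ can coincide). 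Second, if $|S|=1$, say $S=\{y\}$, then $FS=\emptyset$ by Proposition \ref{p:severalsmallresults}(\ref{l:structFS}), and I need to produce one new line not through $v$; a line $\ov{xy}^M$ with $x\in F$ is a candidate, and I would show it is new and non-universal using that $y$ is the unique point at distance $2$ from $v$ together with Lemma \ref{l:1}(\ref{l:1:ii}). Third, and this is the substantive case, if $|S|\geq 2$, I pick two points $y,w\in S$ and consider $\ov{yw}^M\in S^*$; I must verify it is non-universal (it omits $v$ unless $d(y,w)$ makes $v$ lie on it — but $v\notin\ov{yw}^M$ would follow from $d(v,y)=d(v,w)=2$ and the triangle inequality unless $d(y,w)=2$ as well, and even then I can choose the pair so the line is proper, or fall back on the fact that a universal line through two points of $S$ forces structure I can exclude).

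The hard part will be the $|S|\geq 2$ case: showing that among all pairs in $S$ at least one generates a non-universal line that is distinct from everything already counted. Distinctness from $vF\cup vS\cup FS$ is free from the disjointness lemmas, so the real work is ruling out the degenerate possibility that \emph{every} pair $\{y,w\}\subseteq S$ generates a universal line. If that happened, then for all $y,w\in S$ we would have $\ov{yw}^M=V$, forcing in particular $v\in\ov{yw}^M$, i.e. $d(y,w)\in\{2\}$ with $d(v,y)=d(v,w)=2$ (since $|d(v,y)-d(v,w)|=0$ and the sum is $4\neq d(y,w)$, the only way is $d(y,w)=0$, impossible, \emph{unless} I recheck: actually $v\in\ov{yw}$ needs $d(y,w)=|d(y,v)\pm d(v,w)|=|2\pm2|\in\{0,4\}$, both impossible for $d(y,w)\leq 2$ with $y\neq w$). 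Thus $v\notin\ov{yw}^M$ automatically, so \emph{no} line in $S^*$ is universal — the degenerate case cannot occur at all. Hence any $\ell\in S^*$ (which is nonempty since $|S|\geq 2$) is a non-universal line disjoint from $vF\cup vS\cup FS$, and combining with part (\ref{l:partialcont}) yields $\ell^*(M)+\textsc{up}(M)\geq (n-1)+1=n$. I expect the write-up to reduce, after this observation, to cleanly handling the small cases $|S|\leq 1$, where the no-twins hypothesis does the work of producing the missing line or universal pair.
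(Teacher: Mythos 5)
Your core argument is the paper's: get $n-1$ from Proposition \ref{p:severalsmallresults}(\ref{l:partialcont}), then gain one extra non-universal line, namely a line of $S^*$ when $|S|\geq 2$ (your check that $v\notin\ov{yw}^M$ for $y,w\in S$, because $|2\pm 2|\in\{0,4\}$, is exactly why such a line is non-universal and new), and a line forced by the no-twins hypothesis when $|S|=1$. The genuine hole is your case $S=\emptyset$. Because you fixed $v$ arbitrarily, this case is not the trivial ``all distances are $1$'' situation: it only says every point is at distance $1$ from your chosen $v$, while distance-$2$ pairs may still exist inside $F$. There you need $\textsc{up}^{-v}\geq 1$ or a non-universal line outside $vF$, and your proposed ``small argument that not all lines $\ov{vx}^M$ can coincide'' aims at the wrong target: those lines are already pairwise distinct by Proposition \ref{p:severalsmallresults}(\ref{l:cardtrivial}); what is missing is an \emph{additional} line, and a line $\ov{xy}^M$ with $x,y\in F$ at distance $2$ contains $v$ (since $d(x,v)+d(v,y)=2$) and may a priori coincide with a line of $vF$ or be universal, so producing the extra line here is not routine. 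The paper sidesteps this entirely by choosing $v$ rather than fixing it: if some distance equals $2$, take $v$ in such a pair, so $S\neq\emptyset$; if all distances are $1$, every line is a $2$-set, all lines are distinct and non-universal, and $\binom{n}{2}\geq n$ finishes. Adopting that choice of $v$ makes your problematic case disappear.

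In the $|S|=1$ case you should also pin down the choice of $x$: the no-twins hypothesis is what yields $x\in F$ with $d(x,y)=2$ (if every $x\in F$ had $d(x,y)=1$, then $(v,y)$ would be a pair of twins), and for such $x$ one has $v\notin\ov{xy}^M$ because $2\neq|1\pm 2|$, which is precisely what makes $\ov{xy}^M$ non-universal and distinct from every line of $vF\cup vS$ (all of which contain $v$). Your sketch takes ``a line $\ov{xy}^M$ with $x\in F$'' without this restriction; for $x$ at distance $1$ from $y$ the argument does not go through.
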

\begin{proof} If all distance in $M$ are 0 or 1, then every pair of points 
defines a different line and the result is immediate. Otherwise,
there is $v \in V$ such that $S$ is not empty.

From part (\ref{l:partialcont}) of Proposition \ref{p:severalsmallresults} 
we get that for such $v$,   
$$|vS\cup vF\cup FS|+\textsc{up}(M)\geq n-1.$$ 

So we only need to find a non-universal line not in $vS\cup vF\cup FS$. From
parts (\ref{l:disjoint1}) and (\ref{l:disjoint2}) of Proposition \ref{p:severalsmallresults}, we have that $S^* \cap (FS\cup vF\cup vS)=\emptyset$. 
Hence, if $ S^*$ is not empty we are done. 

Let us assume that $S^*=\emptyset$. 
Hence,  $S$ has exactly one element $y$ and the set $FS$ is empty.
Since $(v,y)$ is not a pair of twins, there is some $x\in F$ such that $d(x,y)=2$.
In particular, $v\notin \ov{xy}^M$ since $d(v,y)=d(x,y)=2$. Thus,
$\ov{xy}^M\notin vF\cup vS$ and $(x,y)$ is not 
a universal pair. 
Therefore, the line $\ov{xy}^M$ belongs to $\ell^*(M)\setminus (vF\cup vS)$
which finishes the proof.
\end{proof}

\begin{theorem}\label{t:diamtwo}
Every finite 2-metric space with at least three points belongs to $\cal M^*$.
\end{theorem}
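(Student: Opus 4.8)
The plan is to reduce the general case to the no-twins case that was already handled in Proposition~\ref{p:notwins}, using the structural restriction on twins from Proposition~\ref{p:mincounternotwins}. I would argue by contradiction: suppose $M=(V,d)$ is a minimal (with respect to the number of points) finite $2$-metric space with at least three points that does \emph{not} belong to $\cal M^*$. By Proposition~\ref{p:notwins}, $M$ must contain at least one pair of twins, so fix a pair of twins $(v,v')$. By Proposition~\ref{p:mincounternotwins}, there is a point $u\in V\setminus\{v,v'\}$ with $d(v,u)\neq 1$; since $M$ is a $2$-metric space and $(v,v')$ is a twin pair this means $d(v,u)=d(v',u)=2$. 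Thus the twin pair $(v,v')$ does not generate a universal line — indeed $v''\notin\overline{vv'}^M$ for any $u$ with $d(v,u)=2$ — so $\overline{vv'}^M$ is a non-universal line.

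The core of the argument will be a counting comparison between $M$ and the subspace $M'=M^{-v'}$ obtained by deleting $v'$. By minimality, $M'\in\cal M^*$, so $\ell^*(M')+\textsc{up}(M')\geq |V|-1$. From Lemma~\ref{l:linesind} we have $\ell^*(M)\geq \ell^*(M')$. The key additional input I need is that deleting one member of a twin pair does not destroy universal pairs and that $\overline{vv'}^M$ is a new non-universal line not ``coming from'' $M'$. For the universal pairs: if $(x,y)$ is a universal pair of $M'$, then by Lemma~\ref{l:twinsandlines} it remains universal in $M$ when $v\notin\{x,y\}$, and when $x=v$ one checks directly using $d(v,v')\ge 2$ and $d(v,w)=d(v',w)$ for all $w$ that $v'\in\overline{vy}^M$ as well, so $\textsc{up}(M)\geq \textsc{up}(M')$. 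This is exactly the mechanism already used in the proof of Proposition~\ref{p:mincounternotwins}.

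Putting these together: $\ell^*(M)+\textsc{up}(M)\geq \ell^*(M')+\textsc{up}(M')\geq |V|-1$, so to reach a contradiction I only need one extra non-universal line in $M$ that is not among the lines of $M'$ extended to $M$, or else one extra universal pair. The natural candidate is $\overline{vv'}^M$, which is non-universal (shown above) and contains $v'$ but not (as a line of $M'$) $v$-free — more precisely, since $v'\notin V'$, no line of $M'$ equals $\overline{vv'}^M$, because any line of $M'$ extended to $M$ either omits $v'$ or contains it only by the twin-extension rule, and $\overline{vv'}^M$ literally contains the deleted point $v'$ together with $v$ as its two defining points. So $\overline{vv'}^M$ is a non-universal line of $M$ strictly beyond the $\ell^*(M')$ lines accounted for, giving $\ell^*(M)+\textsc{up}(M)\geq |V|$, contradicting $M\notin\cal M^*$.

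The main obstacle I expect is the bookkeeping in the last step: one must be careful that the non-universal line $\overline{vv'}^M$ is genuinely not double-counted against the $\ell^*(M')$ lines, i.e. that the injection ``extend a non-universal line of $M'$ to a line of $M$'' of Lemma~\ref{l:linesind} has image missing $\overline{vv'}^M$. The cleanest way to see this is that every line in the image is of the form $\overline{ab}^M$ with $a,b\in V'$, whereas $\overline{vv'}^M$ requires the point $v'\notin V'$ as one of its generators and, being of the form $\{v,v'\}\cup\{w:d(w,v)=d(w,v')=1\}$, it contains $v'$; one then checks no $\overline{ab}^M$ with $a,b\neq v'$ can coincide with it (if it did, $v'$ would be a non-generating point of the line, but by Lemma~\ref{l:twinsandlines} then $v$ would also have to lie in it, and a short case check on which points are at distance $1$ from both $v$ and $v'$ rules this out given the existence of $u$ with $d(v,u)=d(v',u)=2$). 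If a subtlety arises there, the fallback is to compare $\textsc{up}$ strictly instead: if $\overline{vv'}^M$ happened to be universal we would be in the situation of Proposition~\ref{p:mincounternotwins}, which is already excluded; hence it is non-universal and the counting closes.
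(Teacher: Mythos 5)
There is a genuine gap, and it sits exactly at the step you flag as ``the mechanism already used in the proof of Proposition~\ref{p:mincounternotwins}'': the inequality $\textsc{up}(M)\geq \textsc{up}(M')$ is false in general. In Proposition~\ref{p:mincounternotwins} the transfer of a universal pair $(v,y)$ of $M'$ to $M$ works only because the special hypothesis there forces $d(v,y)=1$, so that $d(v,y)=1=|d(v,v')-d(v',y)|$ puts $v'$ on the line. In your setting Proposition~\ref{p:mincounternotwins} guarantees precisely the opposite situation: in a minimal counterexample there is a point $u$ with $d(v,u)=d(v',u)=2$, and then $d(v,v')=2$ as well. If such a $u$ happens to form a universal pair with $v$ in $M'$ (i.e.\ $\ov{vu}^{M'}=V'$), then $v'\notin\ov{vu}^{M}$ because $|d(v,u)\pm d(u,v')|\in\{0,4\}\neq 2=d(v,v')$, so $(v,u)$ is \emph{not} universal in $M$, and nothing replaces it (one checks that $(v',u)$ and $(v,v')$ are not universal either). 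Thus deleting a twin can strictly decrease the number of universal pairs; the correct relation, established in the paper, is $\textsc{up}(M')=\textsc{up}(M)+|U|$, where $U$ is the set of points at distance two from $v$ whose line with $v$ is universal in $M'$. With a deficit of $|U|\geq 1$ universal pairs, your single additional non-universal line $\ov{vv'}^{M}$ cannot close the count: you would need at least $|U|+1$ new lines, and your argument produces only one.

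This is exactly where the paper's proof does more work. It splits the points at distance two from $v$ into $U$ and $W$ (lines with $v$ universal, resp.\ non-universal, in $M'$) and shows $\ell^*(M)\geq \ell^*(M')+2|U|+|W|$: for every $u\in U\cup W$ the line $\ov{v'u}^{M}$ contains $v'$ but not $v$, so by Lemma~\ref{l:twinsandlines} it cannot be generated by two points of $V'$, and by Lemma~\ref{l:1}(\ref{l:1:ii}) these lines are pairwise distinct; in addition, for $u\in U$ the line $\ov{vu}^{M}$ is non-universal in $M$ but its restriction was universal in $M'$, hence not charged to $\ell^*(M')$. Combining this with $\textsc{up}(M')=\textsc{up}(M)+|U|$ and minimality gives $0\geq |U|+|W|>0$, the contradiction. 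A secondary weakness of your write-up is that the claim that $\ov{vv'}^{M}$ is not the extension of any line of $M'$ is only sketched (``a short case check\dots''), and Lemma~\ref{l:twinsandlines} alone does not rule out $\ov{ab}^{M}=\{v,v'\}\cup\{w:d(w,v)=d(w,v')=1\}$ for some $a,b\in V'$, since that set contains both twins; the paper sidesteps this entirely by working with the lines $\ov{v'u}^{M}$, whose novelty follows cleanly from the twin lemma because they contain $v'$ but not $v$.
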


\begin{proof} For the sake of contradiction, let $M=(V,d)$ a 2-metric space
 which is minimal not in $\cal M^*$. 

From Proposition \ref{p:notwins} we can assume that $M$ has a pair of twins $(v,v')$. 

Let $M'=M^{-v'}$ and $V^{-v'}$. 
Since $M$ is minimal not in $\cal M^*$  we have that 
\begin{equation} \label{e:min}
\ell^*(M')+\textsc{up}(M')\geq n - 1 \geq \ell^*(M)+\textsc{up}(M).
\end{equation}

Let 
$$U:=\{ u \in V': d(v,u) = 2, \ov{vu}^{M'} = V' \}$$ 
and 
$$W :=\{ u \in V': d(v,u) = 2, \ov{vu}^{M'} \neq V' \}.$$

Notice that $$V=\{v,v'\}\cup\{z: d(v,z)=d(v',z)=1\}\cup U\cup W.$$

As $M$ is minimal not in $\cal M^*$, Proposition \ref{p:mincounternotwins} implies that $|U| + |W| > 0$.

For each point $u \in U \cup W$, the line $\ov{v'u}^M$ contains $v'$ and 
does not contain $v$. Part (\ref{l:1:ii}) of Lemma \ref{l:1} implies  that all these lines are distinct and Lemma  \ref{l:twinsandlines} implies that none of these lines can be generated by two points in $V'$. Additionally, when $u \in U$, the line $\ov{vu}^{M'}$ is not counted in $\ell^*(M')$ because it is universal in $M'$.
Hence, we get 
$$\ell^*(M)\geq \ell^*(M')+ 2|U| + |W|.$$

Plugging this inequality with (\ref{e:min}) we get
\begin{equation} \label{e:muw}
 \textsc{up}(M') - \textsc{up}(M) \geq 2|U| + |W|. 
\end{equation}
 
From Lemma \ref{l:twinsandlines} we deduce that any universal pair $(x,y)$ of $M'$ with $\{x,y\}\cap \{v,v'\}=\emptyset$ is also a universal pair in $M$. Moreover, any universal pair of $M'$ which is not universal in $M$  contains $v$ and a point from the set $U$. Hence $\textsc{up}(M) + |U| = \textsc{up}(M')$.
Replacing in (\ref{e:muw}) we obtain
$$ 0 \geq  |U| + |W|,$$  

which is a contradiction.
%If $\textsc{up}(M)+\ell^*(M)\leq n - 1$, then $\textsc{up}(M)\leq \textsc{up}(M')-||U|$. 

%Then, there is a pair $(x,y)$ universal pair in $M'$ which is not a universal pair in $M$. As before, with the help of Lemma \ref{l:twinsandlines}, 
%we can assume that $v=x$. Hence, the line $\ov{vy}^{M'}$ is not counted in $\ell^*(M')$ since it is $V'$ however it is a non-universal lines in $M$.
%Moreover, as $v\notin \ov{v'z}^M$ we know that $\ov{v'z}^M\neq \ov{vx}^M$.
%Therefore, $\ell^*(M)\geq \ell^*(M')+2$.

%As we are assumming that 
%$\textsc{up}(M)+\ell^*(M)\leq |M|-1$, there is $y'\neq y$ such that $(v,y')$  
%is a universal pair in $M'$ 
%but not in $M$. But then, $\ov{vy}^M=\ov{vy'}^M$. 
%As $v'\notin \ov{vy}^M$,
%we have that $d(v,y)=d(v,y')=2$, this implies that $y=y'$, which 
%is a contradiction.

\end{proof}

\end{document}